\newtheorem{theorem}{Theorem}
\newtheorem{corollary}[theorem]{Corollary}
\newtheorem{lemma}[theorem]{Lemma}
\newtheorem{proposition}[theorem]{Proposition}
\newtheorem{remark}[theorem]{Remark}
\newenvironment{proof}[1][Proof]{\textbf{#1.} }{\ \rule{0.5em}{0.5em}}
\begin{document}

\title{Nourdin-Peccati analysis on Wiener and Wiener-Poisson space for general distributions}
\author{Richard Eden \ \ \ \ \ \ \ \ \ \ \ \ Juan V$\acute{\text{\i}}$quez\\ \\Department of Mathematics, Purdue University\\150 N. University St., West Lafayette, IN 47907-2067, USA}
\date{\ }
\maketitle

\begin{abstract}
Given a reference random variable, we study the solution of its Stein equation
and obtain universal bounds on its first and second derivatives. We then
extend the analysis of Nourdin and Peccati by bounding the Fortet-Mourier and
Wasserstein distances from more general random variables such as members of
the Exponential and Pearson families. Using these results, we obtain
non-central limit theorems, generalizing the ideas applied to their analysis
of convergence to Normal random variables. We do these in both Wiener space
and the more general Wiener-Poisson space. In the former, we study conditions
for convergence under several particular cases and characterize
when two random variables have the same distribution. As an example, we apply this tool to bilinear functionals of Gaussian subordinated fields where the underlying process is a fractional Brownian motion with Hurst parameter bigger than 1/2. In the latter space we give sufficient conditions for a sequence of multiple (Wiener-Poisson) integrals to converge to a Normal random variable.

\end{abstract}

\section{Introduction}

Recent years have seen exciting research on combining Stein's method with
Malliavin calculus in proving central and non-central limit theorems. The
delicate combination of these tools can be attributed to Nourdin and Peccati who intertwined an integration by parts formula from
Malliavin calculus with an ordinary differential equation called a Stein
equation. Much work has been done to compare Normal or Gamma random variables (r.v.'s) with another r.v. (having unknown distribution). See
\cite{NorN}, \cite{NPnonclt}, \cite{NuO}, \cite{NuP} for results on
the convergence of multiple (Wiener) integrals to a standard Normal or Gamma
law. \cite{BT} and \cite{Tudor} discuss Cramer's theorem for Normal and Gamma
distributions applied to multiple integrals. \cite{Viens} gives probability tail bounds in terms of the Normal probability tail, with \cite{EV} applying the
same techniques to give tail bounds in terms of the probability tail of other r.v.'s (e.g. Pearson distributions).

\bigskip

In \cite{NP}, Nourdin and Peccati found a clever link between Stein's method
and Malliavin calculus. This was used to derive the Nourdin-Peccati upper
bound (NP bound) on the Wasserstein, Total Variation, Fortet-Mourier and
Kolmogorov distances of a generic r.v. from a Normal r.v., and lay the
groundwork for comparisons to a more general r.v. (with such results leading
to non-central limit theorems). These authors and Reinert (see \cite{NPR})
applied this NP bound to obtain a second order Poincar\'{e}-type inequality
useful in proving central limit theorems (CLTs) in Wiener space. Specifically,
they proved CLTs for linear functionals of Gaussian subordinated fields.
Particular instances are when the subordinated process is fractional Brownian
motion (fBm) or the solution to the Ornstein-Uhlenbeck (O-U) SDE driven by
fBm. They also characterized convergence in distribution to a Normal r.v. for
multiple stochastic integrals.

\bigskip

Later in \cite{PUST} these ideas were applied to prove the NP bound in Poisson
space (pure jump processes), which was used to obtain Berry-Ess$\acute{\text{e}}$en bounds for
arbitrary tensor powers of O-U kernels. Keeping in line with attempts to
extend these results as far as possible, \cite{Viquez} proved an NP bound in
Wiener-Poisson space. The author applied similar ideas found in \cite{NPR} to
derive a second order Poincar\'{e}-type inequality and use it to prove CLTs
for a continuous average of a product of two O-U processes (one in Wiener
space and the other in Poisson space) which lives in the second chaos of
Wiener-Poisson space. Also, it was proved that under mild conditions, the
small jumps part of a functional in the first Poisson chaos is approximately
equal in law to a functional in the first Wiener chaos with the same kernel
(useful when simulating a fractional L\'{e}vy process as a process with
finitely many jumps plus a fBm). All these results show the importance of this
NP bound and the potential it has as an effective tool in proving non-central
limit theorems, CLTs and characterizations.

\bigskip

Let $Z$ be absolutely continuous with respect to Lebesgue measure. For our purposes, we can think of $Z$ as a ``well-behaved'' r.v. (e.g. it lives in Wiener space and we know its density). Typical instances are when $Z$ is Normal, Gamma, or another member of the Pearson family of distributions. $X$ is another r.v. whose properties are not as easy to determine as with $Z$, our ``target'' r.v.. We may have a hunch that $X$ has the same distribution as $Z$, or in the case of sequences, a belief that $\left\{X_{n}\right\}  $ converges in law to the distribution of $Z$. We thus want to
compare $X$ with $Z$. How different are the laws of $X$ and $Z$ for instance
(and we need to make precise the sense in which they are different)? What
conditions will ensure that $X$ has the same law as $Z$? For a sequence
$\left\{  X_{n}\right\}  $, what sufficient conditions ensure convergence to
$Z$ in distribution? In this regard, we wish to measure the distance between (the laws of) $X$ and $Z$ by a metric $d_{\mathcal{H}}$ which induces a topology that is equal to or stronger than the topology of convergence in distribution:\ if $d_{\mathcal{H}}\left(  X_{n},Z\right) \rightarrow0$, then $X_{n} \rightarrow Z$ in distribution.

\bigskip

The motivation for this paper is to find the widest generalization of the NP
bound by applying it to a target r.v. which is neither Normal nor Gamma, and in
both Wiener space and Wiener-Poisson space. This is worked out in \cite{KT}
but the conditions needed to apply the NP bound are quite restrictive (it was
also carried out only in Wiener space). The conditions we are introducing here
are more general, and are still wide enough in scope to cover a $Z$ belonging
to the Exponential family or the Pearson family. We point out that
Wiener-Poisson space is more inclusive than Wiener space (which can be
identified with a subspace of the former). In fact, it includes processes with
jumps, and therefore considers Poisson space too as a subspace (also by
identification). Nevertheless, even if Wiener space is less general, we can
apply our techniques to a wider class of target r.v. $Z$ than in
Wiener-Poisson space (which requires boundedness of the second derivative of
the solution of the Stein's equation, something not needed in Wiener space).

\bigskip

Our main results are the NP bounds on $d_{\mathcal{H}}\left(  X,Z\right)  $ in Wiener space
and in Wiener-Poisson space. The main result in Wiener space (Theorem
\ref{thm_boundnew}) is%
\begin{align*}
d_{\mathcal{H}}\left(  X,Z\right)   &  \leq k\mathbb{E}\left|  g_{\ast}\left(  X\right)
-g_{X}\right| \\
&  \leq k\sqrt{\left|  \mathbb{E}\left[  g_{\ast}\left(  X\right)
^{2}\right]  -\mathbb{E}\left[  g_{\ast}\left(  Z\right)  ^{2}\right]
\right|  +\left|  \mathbb{E}\left[  XG_{\ast}\left(  X\right)  \right]
-\mathbb{E}\left[  ZG_{\ast}\left(  Z\right)  \right]  \right|  +\left|
\mathbb{E}\left[  g_{X}^{2}\right]  -\mathbb{E}\left[  g_{Z}^{2}\right]
\right|  }\text{.}%
\end{align*}
The main result in Wiener-Poisson space (Theorem \ref{thm_boundnewWP}) is%
\[
d_{\mathcal{H}}\left(  X,Z\right)  \leq k\biggl(\mathbb{E}\left|  g_{\ast}\left(  X\right)
-g_{X}\right|  +\mathbb{E}\left[  \left\langle \left|  x\left(  DX\right)
^{2}\right|  ,\left|  -DL^{-1}X\right|  \right\rangle _\mathfrak{H}\right]  \biggr
)\text{.}%
\]
Here, $g_{X}:=\mathbb{E}[\langle DX,-DL^{-1}X\rangle_\mathfrak{H}|X]$ and
$g_{Z}:=\mathbb{E}[\langle DZ,-DL^{-1}Z\rangle_\mathfrak{H}|Z]$ are random variables
defined using Malliavin calculus operators, specifically, the Malliavin
derivative $D$ and the inverse of the infinitesimal generator $L$ of the O-U
semigroup. The definitions parallel each other, but it would be helpful to
think of $g_{X}$ as an object belonging exclusively to $X$, and $g_{Z}$ to
$Z$. On the other hand, $g_{\ast}(\cdot):=\mathbb{E}[\langle DZ,-DL^{-1}%
Z\rangle_\mathfrak{H}|Z=\cdot]$ is a function whose support is the support of $Z$,
taking on nonnegative numbers as values. It will depend only on the density of
$Z$, and is independent of the structure of $X$. As such, it is an object
belonging solely to $Z$. In the second term of the first bound above,
$G_{\ast}$ is an antiderivative of $g_{\ast}$, provided it exists. We can make
sense of the NP bounds in the following way: if we want to know how different
the laws of $X$ and $Z$ are, then we need to know how different (in the
$L^{1}$ sense) $g_{X}=\mathbb{E}[\langle DX,-DL^{-1}X\rangle_\mathfrak{H}|X]$ and
$g_{\ast}(X)=\mathbb{E}[\langle DZ,-DL^{-1}Z\rangle_\mathfrak{H}|Z=X]$ are. In
Wiener-Poisson space, we consider in addition how close the jump part
$\mathbb{E}[\langle|x\left(  DX\right)  ^{2}|,|-DL^{-1}X|\rangle_\mathfrak{H}]$ is to
$0$, which makes sense since $Z$ belongs to Wiener space (subspace of the
Wiener-Poisson space without jumps).

\bigskip

In our bounds above, $k$ is a constant that does not depend on $X$ but on $Z$
and the metric we are using. For convergence problems, we do not need its
specific value since the convergence will follow from the convergence of
$\mathbb{E}\left|  g_{\ast}\left(  X_{n}\right)  -g_{X_{n}}\right|  $ to $0$.
This presupposes we have such a constant $k$. This constant appears as a bound
($\left\|  \phi\right\|  _{\infty}\leq k$) for some function $\phi$, which is
related to the solution of the underlying Stein equation. In particular, since
we have a Stein equation for each $Z$ (the target r.v.), $k$ depends on $Z$.
Finding such a bound $k$ is easy when $Z$ is Normal: $g_{\ast}$ is
constant, and consequently, the Stein equation is simpler. If $g_{\ast}$
vanishes at a finite endpoint of the support of $Z$, the challenge now is to find a
bound for $\left\|  \phi/g_{\ast}\right\|  _{\infty}$. To the best of our
knowledge, \cite{KT} (Kusuoka and Tudor) presents the first attempt to find
such a sup\ norm bound when $Z$ is not Normal. Their result is presented below
as Lemma \ref{prop_tudorbound}. In Theorem \ref{prop_fprimebound} we
improve their result, and this paves the way for the needed bound we stated
for the general non-Normal case.

\bigskip

The paper is organized as follows. In Section \ref{sec_Malliavin}, we review
the operators we need from Malliavin calculus. We also define the functions
$g_{\ast}$ and $G_{\ast}$ as well as the random variables $g_{X}$ and $g_{Z}$, 
studying carefully their properties (needed in the subsequent sections).
Section \ref{sec_Stein} contains preliminaries on Stein's method. Here we find
universal bounds on the first and second derivatives of the solution of the
general Stein equation. Our main result in Wiener space is in Section
\ref{sec_Wresults}, where we give a tractable upper inequality which is easier
to compute. We also characterize when the law of $X$ is the same as that of
$Z$. Said result is applied to specific cases when $g_{\ast}$ is a polynomial
and when $\left\{  X_{n}\right\}  $ is a sequence of multiple integrals. As an example, we prove the convergence of a bilinear functional of a Gaussian subordinated field to a $\chi^2$ r.v. by computing some moments and showing their convergence to desired values. In Section \ref{sec_WPresults}, we extend the main result to the more general Wiener-Poisson space. Here, we work out some sufficient conditions for
convergence to a Normal law and convergence of the fourth moment.



\section{\label{sec_Malliavin}Elements of Malliavin calculus and tools}

For the sake of completeness, we include here a brief survey of the needed
Malliavin calculus objects. The r.v. $\left\langle DF,-DL^{-1}%
F\right\rangle_\mathfrak{H} $, to be constructed for $F=Z$ and for $F=X$, is a key element
that bridges Stein's method and Malliavin calculus. $D$ is the Malliavin
derivative operator and $L$ is the generator of the Ornstein-Uhlenbeck semigroup.

\subsection{Wiener space}

Nualart presents in Chapter 1 of \cite{Nualart} a very good exposition on
Malliavin calculus in Wiener space. We mention here the elements that we need.
Let $\mathfrak{H}$ be a real separable Hilbert space. Assume a probability space $\left(
\Omega,\mathcal{F},\mathbb{P}\right)  $ over which $W=\left\{  W\left(
h\right)  :h\in \mathfrak{H}\right\}  $ is an isonormal Gaussian process. By definition,
this means $W$ is a centered Gaussian family such that $\mathbb{E}\left[
W\left(  h_{1}\right)  W\left(  h_{2}\right)  \right]  =\left\langle
h_{1},h_{2}\right\rangle_\mathfrak{H}$. We may also assume that $\mathcal{F}$ is the
$\sigma$-field generated by $W$. The white noise case is when $\mathfrak{H}
=L^{2}\left( T, \mathcal{B}, \mu \right)$ where $(T, \mathcal{B})$ is a measurable space and $\mu$ is a $\sigma$-finite atomless measure. The Gaussian process $W$ is then characterized by the family of r.v.'s $\left\{  W\left(  A\right)
:A\in\mathcal{B}\left[  0,T\right],\mu\left(  A\right)  <\infty\right\}  $ where $W\left(A\right)  =W\left(  \mathbf{1}_{A}\right)  $; we write $W_{t}=W\left(
\mathbf{1}_{\left[  0,t\right]  }\right)  $. We can then think of $W$ as an
$L^{2}\left(  \Omega,\mathcal{F},\mathbb{P}\right)  $ random measure on $\left(  T,\mathcal{B}\right)  $. This is called the white noise measure based on $\mu$.

\bigskip

The $q^{\text{th}}$ Hermite polynomial $H_{q}$ is given by $H_{q}\left(
x\right)  =\left(  -1\right)  ^{q}e^{x^{2}/2}\frac{d^{q}}{dx^{q}%
}\left(  e^{-x^{2}/2}\right)  $ for $q\geq1$ and $H_{0}\left(  x\right)  =1$.
The $q^{\text{th}}$ Wiener chaos $\mathcal{H}_{q}$ is defined as the subspace
of $L^{2}\left(  \Omega\right)  =L^{2}\left(  \Omega,\mathcal{F}%
,\mathbb{P}\right)  $ generated by the r.v.'s $\left\{  H_{q}\left(
W\left(  h\right)  \right)  :h\in \mathfrak{H},\left\|  h\right\|  _\mathfrak{H}=1\right\}  $.\ In
the white noise case $\mathfrak{H}=L_{\mu}^{2}\left(  \left[  0,1\right]  \right)  $,
each Wiener chaos consists of iterated multiple (Wiener) integrals%
\[
I_{q}\left(  f\right)  :=n!\int_{0}^{1}\int_{0}^{t_{1}}\cdots\int_{0}%
^{t_{q-1}}f\left(  t_{1},t_{2},\ldots,t_{q}\right)  dW_{t_{q}}\cdots
dW_{t_{2}}dW_{t_{1}}%
\]
with respect to $W$, where $f\in \mathfrak{H}^{\odot q}$ is a symmetric nonrandom kernel. When $f$ is nonsymmetric, we let $\widetilde{f}$ denote its symmetrization, and $I_q(f) = I_q(\widetilde{f})$.

\bigskip

All elements of $\mathcal{H}_{1}$ are Gaussian and all elements of
$\mathcal{H}_{0}$ are deterministic. It is well-known that $L^{2}\left(
\Omega\right)  $ can be decomposed into an infinite orthogonal sum of the
Wiener chaoses, i.e. $L^{2}\left(  \Omega\right)  =\oplus_{q=0}^{\infty
}\mathcal{H}_{q}$. In the white noise case, any $F\in L^{2}\left(
\Omega\right)  $ admits a Wiener chaos decomposition of multiple integrals
\begin{equation}
F=\sum_{q=0}^{\infty}I_{q}\left(  f_{q}\right)  \label{eqn_chaosdecom}%
\end{equation}
where each symmetric $f_{q}\in \mathfrak{H}^{\odot q}=L_{\mu}^{2}\left(  T^{q}\right)  $
is uniquely determined by $F$. Note that $I_{0}\left(  f_{0}\right)
=f_{0}=\mathbb{E}\left[  F\right]  $ and $\mathbb{E}\left[  I_{q}\left(
f_{q}\right)  \right]  =0$ for $q\geq1$.

\bigskip

Consider an orthonormal system $\left\{  e_{k}:k\geq1\right\}  $ in $\mathfrak{H}$. For
$f\in \mathfrak{H}^{\otimes p}$ and $g\in \mathfrak{H}^{\otimes q}$, the contraction of order
$r\leq\min\left\{  p,q\right\}  $ is the element $f\otimes_{r}g\in
\mathfrak{H}^{\otimes\left(  p+q-2r\right)  }$ defined by%
\[
f\otimes_{r}g=\sum_{i_{1},\ldots,i_{r}}^{\infty}\left\langle f,e_{i_{1}%
}\otimes\cdots\otimes e_{i_{r}}\right\rangle _{\mathfrak{H}^{\otimes r}}\left\langle
g,e_{i_{1}}\otimes\cdots\otimes e_{i_{r}}\right\rangle _{\mathfrak{H}^{\otimes r}%
}\text{.}%
\]
Even if $f$ and $g$ are symmetric, $f\otimes_{r}g$ may be nonsymmetric so we
denote its symmetrization by $f\widetilde{\otimes}_{r}g$. In the white noise case
$\mathfrak{H}=L_{\mu}^{2}\left(  T\right)  $, the contraction is given by integrating out
$r$ variables. Thus, if $f\in L_{\mu}^{2}\left(  T^{p}\right)  $ and $q\in
L_{\mu}^{2}\left(  T^{q}\right)  $, we have $f\otimes_{r}g\in L_{\mu}%
^{2}\left(  T^{p+q-2r}\right)  $ and%
\[
\left(  f\otimes_{r}g\right)  \left(  t_{1},\ldots,t_{p+q-2r}\right)
=\int_{T^{r}}f\left(  t_{1},\ldots,t_{p-r},s_{1},\ldots,s_{r}\right)  g\left(
t_{p+1},\ldots,t_{p+q-r},s_{1},\ldots,s_{r}\right)  d\mu\left(  s_{1}\right)
\cdots d\mu\left(  s_{r}\right)  \text{.}%
\]

The product of two multiple integrals is
\begin{equation}
I_{q}\left(  f\right)  I_{p}\left(  g\right)  =\sum_{r=0}^{p\wedge q}%
r!\binom{p}{r}\binom{q}{r}I_{q+p-2r}\left(  f\otimes_{r}g\right)  \text{.}
\label{eqn_productW}%
\end{equation}

\bigskip

The Malliavin derivative of a random variable $F\in L^{2}\left(
\Omega\right)  $ is an $\mathfrak{H}$-valued random variable denoted by $DF$. In the
white noise case $\mathfrak{H}=L_{\mu}^{2}\left(  T\right)  $, if $F=I_{1}\left(
f\right)  =\int_{T}f\left(  t\right)  dW_{t}$, then $D$ maps $F$ to an
$L_{\mu}^{2}\left(  T\right)  $-valued element: $D_{r}F=f\left(  r\right)  $
for $r\in T$. In general, if $F\in L^{2}\left(  \Omega\right)  $ admits the
decomposition (\ref{eqn_chaosdecom}), then
\begin{equation}
D_{r}F=\sum_{q=1}^{\infty}qI_{q-1}\left(  f_{q}\left(  r,\cdot\right)
\right)  \text{.} \label{eqn_chaosderiv}%
\end{equation}
We denote by $\mathbb{D}^{1,2}$ the domain of $D$ in $L^{2}\left(
\Omega\right)  $. $F$ with the above decomposition is in $\mathbb{D}^{1,2}$ if
and only if $\mathbb{E}\left[  \left\|  DF\right\|  _{L_{\mu}^{2}\left(
T\right)  }^{2}\right]  =\sum_{q=1}^{\infty}q\cdot q!\left\|  f_{q}\right\|
_{L_{\mu}^{2}\left(  T^{q}\right)  }^{2}<\infty$. $D$ satisfies the chain rule
formula: $D\left(  f\left(  F\right)  \right)  =f^{\prime}\left(  F\right)
DF$ when $F\in\mathbb{D}^{1,2}$ and $f$ is differentiable with bounded
derivative. One may relax this to almost everywhere differentiability of $f$
as long as $F$ has an absolutely continuous law.

\bigskip

$D$ has an adjoint, the divergence operator $\delta$, so that if $F\in
\operatorname*{Dom}\delta\subset L^{2}\left(  \Omega;\mathfrak{H}\right)  $, then 
$\delta\left(  F\right)  \in L^{2}\left(  \Omega\right)  $ and $\mathbb{E}%
\left[  \delta\left(  F\right)  G\right]  =\mathbb{E}\left[  \left\langle
F,DG\right\rangle _\mathfrak{H}\right]  $ for any $G\in\mathbb{D}^{1,2}$. In the white
noise case, $\delta$ is called the Skorohod integral: for $F\in
\operatorname*{Dom}\delta\subset L_{\mu\times\mathbb{P}}^{2}\left(
T\times\Omega\right)  $ with chaos representation $F\left(  t\right)
=\sum_{q=0}^{\infty}I_{q}\left(  f_{q}\left(  t,\cdot\right)  \right)  $ where
each $f_{q}\in L_{\mu^{\otimes\left(  q+1\right)  }}^{2}$ is symmetric in the
last $q$ variables, $\delta\left(  F\right)  =\sum_{q=0}^{\infty}%
I_{q+1}\left(  \widetilde{f}_q\right)  $ if $\sum_{q=0}^{\infty}\left(
q+1\right)  !\left\|  \widetilde{f}_q\right\|  _{L_{\mu^{\otimes\left(
q+1\right)  }}^{2}}^{2}<\infty$, i.e. $F\in\operatorname*{Dom}\delta$.

\bigskip

One other operator we need, $L$, acts on $F$ as in (\ref{eqn_chaosdecom}) in this
way: $LF=-\sum_{q=1}^{\infty}qI_{q}\left(  f_{q}\right)  $. Its domain
consists of $F$ for which $\sum_{q=1}^{\infty}q^{2}\cdot q!\left\|
f_{q}\right\|  _{L_{\mu}^{2}\left(  T^{q}\right)  }^{2}<\infty$. $L$ also
happens to be the infinitesimal generator of the Ornstein-Uhlenbeck semigroup
$T_{t}$, defined by $T_{t}F=\sum_{q=0}^{\infty}e^{-qt}I_{q}\left(
f_{q}\right)  $. One important relation is $\delta DF=-LF$. More than $L$, we
need its pseudo-inverse $L^{-1}$ defined by $L^{-1}F=-\sum_{q=1}^{\infty}%
\frac{1}{q}I_{q}\left(  f_{q}\right)  $. It easily follows that $L^{-1}%
LF=F-\mathbb{E}\left[  F\right]  $.

\subsection{Wiener-Poisson space}

Assume a complete probability space $\left(  \Omega,\mathcal{F},\mathbb{P}%
\right)  $ over which $\mathcal{L}=\left\{  \mathcal{L}_{t}\right\}  _{t\geq
0}$ is a L\'{e}vy process. By definition, this means $\mathcal{L}$ has
stationary and independent increments, is continuous in probability, and
$\mathcal{L}_{0}=0$. Suppose $\mathcal{L}$ is cadlag, centered, and
$\mathbb{E}\left[  \mathcal{L}_{1}^{2}\right]  <\infty$. We may also assume
$\mathcal{F}$ is generated by $\mathcal{L}$. Let $\mathcal{L}$ have L\'{e}vy
triplet $\left(  0,\sigma^{2},\nu\right)  $ and thus, L\'{e}vy-It\^{o}
decomposition $\mathcal{L}_{t}=\sigma W_{t}+\int\int_{\left[  0,t\right)
\times\mathbb{R}_{0}}xd\widetilde{N}\left(  s,x\right)  $ where $W=\left\{
W_{t}\right\}  _{t\geq0}$ is a standard Brownian motion, $\widetilde{N}$ is the
compensated jump measure (defined in terms of $\nu$) and $\mathbb{R}%
_{0}=\mathbb{R}-\left\{  0\right\}  $. See \cite{Apple} and \cite{Sato} for
more about L\'{e}vy processes.

\bigskip

Consider now the measure $\mu$ on $\mathcal{B}\left(  \mathbb{R}^{+}%
\times\mathbb{R}\right)  $ where $\mathbb{R}^{+}=\left\{  t:t\geq0\right\}  $
and
\[
d\mu\left(  t,x\right)  =\sigma^{2}dt\delta_{0}\left(  x\right)  +x^{2}%
dtd\nu\left(  x\right)  \left(  1-\delta_{0}\left(  x\right)  \right)
\text{.}%
\]
Analogous to a Gaussian process $W$ being extended to a random measure (which we also denoted by $W$) in Wiener space, $\mathcal{L}$ can
be extended to a random measure $M\,$(see \cite{Ito}) on $\left(
\mathbb{R}^{+}\times\mathbb{R},\mathcal{B}\left(  \mathbb{R}^{+}%
\times\mathbb{R}\right)  \right)  $. This is used to construct (in an
analogous way to the It\^{o} integral construction) an integral on step
functions, and then by linearity and continuity, extended to $L_{\mu^{\otimes
q}}^{2}=L^{2}\left(  \left(  \mathbb{R}^{+}\times\mathbb{R}\right)
^{q},\mathcal{B}\left(  \mathbb{R}^{+}\times\mathbb{R}\right)  ^{q}%
,\mu^{\otimes q}\right)  $. We also denote it by $I_{q}$. As in Wiener space,

\begin{enumerate}
\item $I_{q}\left(  f\right)  =I_{q}\left(  \widetilde{f}\right)  $;

\item $I_{q}$ is linear;

\item $\mathbb{E}\left[  I_{q}\left(  f\right)  I_{p}\left(  g\right)
\right]  =\mathbf{1}_{\{q=p\}}q!\int_{\left(  \mathbb{R}^{+}\times\mathbb{R}%
\right)  ^{q}}\widetilde{f}\widetilde{g}d\mu^{\otimes q}$.
\end{enumerate}

Thus, when $F=I_q(f)$, $\mathbb{E}[F^2]=\mathbb{E}[I_q(f)^2]=q!||\widetilde{f}||^2_{L_\mu^{\otimes q}}$.

Contractions are defined slightly differently. Suppose $f\in L_{\mu^{\otimes
q}}^{2}$ and $g\in L_{\mu^{\otimes p}}^{2}$. Let $r\leq\min\left\{
q,p\right\}  $ and $s\leq\min\left\{  q,p\right\}  -r$. The contraction
$f\otimes_{r}^{s}g\in L_{\mu^{\otimes\left(  q+p-2r-s\right)  }}^{2}$ is
defined by integrating out $r$ variables and sharing $s$ of the remaining
variables:%
\[
\left(  f\otimes_{r}^{s}g\right)  \left(  z,u,v\right)  =\left(  \prod
_{i=1}^{s}x_{i}\right)  \left\langle f\left(  \cdot,z,u\right)  ,g\left(
\cdot,z,v\right)  \right\rangle _{L_{\mu^{\otimes r}}^{2}}%
\]
where $z\in\left(  \mathbb{R}^{+}\times\mathbb{R}_{0}\right)  ^{s}$,
$z_{i}=\left(  t_{i},x_{i}\right)  $, $u\in\left(  \mathbb{R}^{+}%
\times\mathbb{R}_{0}\right)  ^{q-r-s}$ and $v\in\left(  \mathbb{R}^{+}%
\times\mathbb{R}_{0}\right)  ^{p-r-s}$. Its symmetrization is $f\widetilde
{\otimes}_{r}^{s}g$. We need the following product formula later (see \cite{LS}
for the proof):
\begin{equation}
I_{q}\left(  f\right)  I_{p}\left(  g\right)  =\sum_{r=0}^{p\wedge q}%
\sum_{s=0}^{p\wedge q-r}r!s!\binom{p}{r}\binom{q}{r}\binom{p-r}{s}\binom
{q-r}{s}I_{q+p-2r-s}\left(  f\otimes_{r}^{s}g\right)  \text{.}
\label{eqn_productWP}%
\end{equation}

We may think of this as a more general version of the product formula (\ref{eqn_productW}) where we only consider $s=0$ since there are no jump components to be shared (which appear in the definition of $f\otimes_{r}^{s}g$).

\bigskip

We have briefly narrated a setup parallel to what was done in Wiener space.
See \cite{SUV} for a more detailed exposition. This time though, we have only
considered $\mathfrak{H}=L^{2}\left(  \mathbb{R}^{+}\times\mathbb{R},\mathcal{B}\left(
\mathbb{R}^{+}\times\mathbb{R}\right)  ,\mu\right)  $ as underlying Hilbert
space, with inner product $\left\langle f,g\right\rangle _\mathfrak{H}=\int
_{\mathbb{R}^{+}\times\mathbb{R}}f\left(  z\right)  g\left(  z\right)
d\mu\left(  z\right)  $. There is as yet no Malliavin calculus theory
developed for a more general abstract Hilbert space. While we don't have a
chaos decomposition via orthogonal polynomials (like Hermite polynomials in
Wiener space; see \cite{DiNunno}), we still have a comparable decomposition
proved by It\^{o} (Theorem 2, \cite{Ito}): for $F\in L^{2}\left(
\Omega,\mathcal{F},\mathbb{P}\right)  $,%
\begin{equation}
F=\sum_{q=0}^{\infty}I_{q}\left(  f_{q}\right)  \text{ where }f_{q}\in
L_{\mu^{\otimes q}}^{2}\text{.} \label{eqn_chaosdecomWP}%
\end{equation}

With this decomposition, we can define the Malliavin derivative operator and
Skorohod integral operator. Define $\operatorname*{Dom}D$ as the set of $F\in
L^{2}\left(  \Omega\right)  $ for which $\sum_{q=1}^{\infty}qq!\left\|
f_{q}\right\|  _{L_{\mu^{\otimes q}}^{2}}^{2}<\infty$ and
\[
D_{z}F=\sum_{q=1}^{\infty}qI_{q-1}\left(  f_{q}\left(  z,\cdot\right)
\right)  \text{.}%
\]

It is instructive to consider the derivatives $D_{t,0}$ and $D_{z}$ where
$z=\left(  t,x\right)  $ has $x\neq0$. This will enable us to better
understand the similarities, and where they end, between the Malliavin
calculus of Wiener space and that of Wiener-Poisson space. See \cite{SUV} and
\cite{SUV2}\ for more details on the following discussion. We consider two
spaces on which we can embed $\operatorname*{Dom}D$. For $F\in L^{2}\left(
\Omega\right)  $, we say $F\in\operatorname*{Dom}D^{0}$ iff $\sum
_{q=1}^{\infty}qq!\int_{\mathbb{R}^+}\left\|  f_{q}\left(  \left(  t,0\right)
,\cdot\right)  \right\|  _{L_{\mu^{\otimes\left(  q-1\right)  }}^{2}}%
^{2}dt<\infty$ and $F\in\operatorname*{Dom}D^{J}$ iff $\sum_{q=1}^{\infty
}qq!\int_{\mathbb{R}^{+}\times\mathbb{R}_{0}}\left\|  f_{q}\left(
z,\cdot\right)  \right\|  _{L_{\mu^{\otimes\left(  q-1\right)  }}^{2}}^{2}%
d\mu\left(  z\right)  <\infty$. In fact, $\operatorname*{Dom}%
D=\operatorname*{Dom}D^{0}\cap\operatorname*{Dom}D^{J}$. Since $W$ and
$\widetilde{N}$ are independent, we can think of $\Omega$ as a cross product of
the form $\Omega_{W}\times\Omega_{J}$ where $\Omega_{W}=\mathcal{C}\left(
\mathbb{R}^{+}\right)  $ and $\Omega_{J}$ consists of the sequences $\left(
\left(  t_{1},x_{1}\right)  ,\left(  t_{2},x_{2}\right)  ,\ldots\right)
\in\left(  \mathbb{R}^{+}\times\mathbb{R}_{0}\right)  ^{\mathbb{N}}$ (with a
few other technical conditions).

\begin{itemize}
\item The derivative $D_{t,0}$ can be interpreted as the derivative with
respect to the Brownian motion part. In fact, if $\nu=0$, then $D_{t,0}%
F=\frac{1}{\sigma}D_{t}^{W}F$ where $D^{W}$ is the classical Malliavin
derivative (defined in Wiener space); the $\frac{1}{\sigma}$ comes from the
fact that we are differentiating with respect to $\sigma W_{t}$ and not just
$W_{t}$. From the isometry $L^{2}\left(  \Omega\right)  \simeq L^{2}\left(
\Omega_{W};L^{2}\left(  \Omega_{J}\right)  \right)  $, consider $F\in
L^{2}\left(  \Omega\right)  $ as an element of $L^{2}\left(  \Omega_{W}%
;L^{2}\left(  \Omega_{J}\right)  \right)  $. A smooth $F$ then has form
$F=\sum_{i=1}^{n}G_{i}H_{i}$ where each $G_{i}$ is a smooth Brownian random
variable and $H_{i}\in L^{2}\left(  \Omega_{J}\right)  $. We can then define
$D^{W}$ by $D^{W}F=\sum_{i=1}^{n}\left(  D^{W}G_{i}\right)  H_{i}$, where
$D^{W}G_{i}$ is the classical Malliavin derivative. It can be shown that this
definition can be extended to a subspace $\operatorname*{Dom}D^{W}%
\subset\operatorname*{Dom}D^{0}$, so that for $F\in\operatorname*{Dom}D^{W}$, as
expected,
\[
D_{t,0}F=\frac{1}{\sigma}D_{t}^{W}F\text{.}%
\]
For functionals of the form $F=f\left(  G,H\right)  \in L^{2}\left(
\Omega\right)  $ having $G\in\operatorname*{Dom}D^{W}$, $H\in L^{2}\left(
\Omega_{J}\right)  $, and such that $f$ is continuously differentiable with bounded partial
derivatives in the first variable, we have a chain rule result: $F\in
\operatorname*{Dom}D^{0}$ and $D_{t,0}F=\frac{1}{\sigma}\frac{\partial
f}{\partial x}\left(  G,H\right)  D_{t}^{W}G$. We may loosen the restriction
on $f$ to a.e. differentiability if $G$ is absolutely continuous.

\item The derivative $D_{z}$, $z=\left(  t,x\right)  $ with $x\neq0$, is a
difference operator: for $F\in\operatorname*{Dom}D^{J}$%
\[
D_{z}F=\frac{F\left(  \omega_{t,x}\right)  -F\left(  \omega\right)  }{x}%
\]
where, if $\Psi_{z}F$ is the right-hand expression, then
$\mathbb{E}\left[  \int_{\mathbb{R}^{+}\times\mathbb{R}_{0}}\left(  \Psi
_{z}F\right)  ^{2}d\mu\left(  z\right)  \right]  <\infty$. The idea is to
introduce a jump of size $x$ at time $t$ which is captured by the realization
$\omega_{t,x}$. For $\omega=\left(  \omega^{W},\omega^{J}\right)  $, we define
$\omega_{t,x}$ by simply adding the time-jump pair $\left(  t,x\right)  $ to
$\omega^{J}$. For $F=f\left(  G,H\right)  \in L^{2}\left(  \Omega\right)  $
with $G\in L^{2}\left(  \Omega_{J}\right)  $, $H\in$ $\operatorname*{Dom}%
D^{J}$ and $f$ continuous, we have this chain rule result:
\[
D_{z}F=\frac{f\left(  G,H\left(  \omega_{t,x}\right)  \right)  -f\left(
G,H\left(  \omega\right)  \right)  }{x}=\frac{f\left(  G,xD_{z}H+H\left(
\omega\right)  \right)  -f\left(  G,H\left(  \omega\right)  \right)  }%
{x}\text{.}%
\]
If $f$ is differentiable, then by the mean value theorem, for some random
$\theta_{z}\in\left(  0,1\right)  $,
\[
D_{z}F=\frac{\partial f}{\partial y}\left(  G,\theta_{z}xD_{z}H+H\left(
\omega\right)  \right)D_zH  \text{.}%
\]
\end{itemize}

The following unified chain rule will be very useful (see Proposition 2 in
\cite{Viquez}): If $F\in\operatorname*{Dom}D^{W}\cap\operatorname*{Dom}D^{J}$,
$DF\in L_{\mu}^{2}$, $f\in\mathcal{C}^{k-1}$ has bounded first derivative 
(or $f'$ may be unbounded if $F$ is absolutely continuous with respect to Lebesgue measure) and $f^{(k-1)}$ is Lipschitz, then for
$z\in\left(  t,x\right)  \in\mathbb{R}^{+}\times\mathbb{R}$,%

\begin{equation}
D_zf(F) = \sum_{n=1}^{k-1}\frac{f^{(n)}(F)}{n!}x^{n-1}(D_zF)^n+\int_0^{D_zF}\frac{f^{(k)}(F+xu)}{(k-1)!}x^{k-1}(D_zF-u)^{k-1}du\label{eqn_chainWP_L}.%
\end{equation}

In the case where $f^{(k-1)}$ is differentiable everywhere, the chain rule is
\begin{align}
D_zf(F) = \sum_{n=1}^{k-1}\frac{f^{(n)}(F)}{n!}x^{n-1}(D_zF)^n+\frac{f^{(k)}(F+\theta_zxD_zF)}{k!}x^{k-1}(D_zF)^k\label{eqn_chainWP}
\end{align}
for some function $\theta_z\in(0,1)$ for all $z=(t,x)\in\mathbb{R}^+\times\mathbb{R}$.

\bigskip

We now define the adjoint of $D$ (see \cite{SUV} again). Suppose $F\in$
$L^{2}\left(  \mathbb{R}^{+}\times\mathbb{R}\times\Omega,\mathcal{B}\left(
\mathbb{R}^{+}\times\mathbb{R}\right)  \times\mathcal{F},\mu\times
\mathbb{P}\right)  $ with $F\left(  z\right)  =\sum_{q=0}^{\infty}I_{q}\left(
f_{q}\left(  z,\cdot\right)  \right)  $ where each $f_{q}\in L_{\mu
^{\otimes\left(  q+1\right)  }}^{2}$ is symmetric in the last $q$ variables.
In this case, the Skorohod integral of $F$ is $\delta\left(
F\right)  =\sum_{q=0}^{\infty}I_{q+1}\left(  \widetilde{f_{q}}\right)  $ where
$\sum_{q=0}^{\infty}\left(  q+1\right)  !\left\|  \widetilde{f_{q}}\right\|
_{L_{\mu^{\otimes\left(  q+1\right)  }}^{2}}^{2}<\infty$, i.e. $F\in
\operatorname*{Dom}\delta$ (by definition). Furthermore, 
$\mathbb{E}\left[  \delta\left(  F\right)  G\right]  =\mathbb{E}\left[
\left\langle F,DG\right\rangle _{L_{\mu}^{2}}\right]  $ for any $G\in
\operatorname*{Dom}D$.

\bigskip

Finally, we define as before $L=-\delta D$: for $F$ as in
(\ref{eqn_chaosdecomWP}), $LF=-\sum_{q=1}^{\infty}qI_{q}\left(  f_{q}\right)
$. The pseudo-inverse is defined by $L^{-1}F=-\sum_{q=1}^{\infty}\frac{1}%
{q}I_{q}\left(  f_{q}\right)  $. We have again $L^{-1}LF=F-\mathbb{E}\left[
F\right]  $.

\bigskip

\begin{remark}
\label{equivalencyW-WP} Write $\vec{z}_{q}=(z_{1},\dots,z_{q})$, with
$z_{i}=(t_{i},x_{i})$ for all $i$. Define
\[
\widetilde{W}=\left\{  F=\sum_{q=0}^{\infty}I_{q}(f_{q})\in\operatorname*{Dom}%
D^{0}\ :\ f_{q}\in L_{\mu^{\otimes q}}^{2}\text{, and for every }q\text{,
}f_{q}(\vec{z}_{q})=0\text{ if }x_{i}\neq0\text{ for some }i\right\}  \text{.}%
\]
Notice from the previous discussion that if $f_{q}(\vec{z}_{q})=0\text{
because }x_{i}\neq0$, then $I_{q}(f_{q})$ coincides with an iterated multiple
(Wiener) integral. Therefore, Wiener space can be seen as a subspace of
Wiener-Poisson space (similarly for Poisson space as a subspace). Moreover, $\widetilde{W}$
coincides with the subspace $\mathbb{D}^{1,2}$ (through embedding). The
relevance of these facts is that if we have a r.v. $F\in\widetilde{W}$, then
the chain rule formula and the Malliavin calculus operators are exactly (up to
a constant) the same as those in Wiener space (as explained earlier in this
subsection). Furthermore, the results (from other papers) in Wiener space can
be replicated in $\widetilde{W}$ and so the conclusions will hold in
Wiener-Poisson space, but within $\widetilde{W}$. From now on, $\mathbb{D}%
^{1,2}$ will mean the subspace $\mathbb{D}^{1,2}$ in Wiener space or the
respective embedding $\widetilde{W}$ in Wiener-Poisson space.
\end{remark}

\subsection{The random variables $\boldsymbol{g_X}$, $\boldsymbol{g_Z}$ and the functions $\boldsymbol{g_*}$, $\boldsymbol{G_*}$}

From this point on, $\mathfrak{H}$ will be taken as $L^{2}\left(  \mathbb{R}^{+}\times\mathbb{R},\mathcal{B}\left(
\mathbb{R}^{+}\times\mathbb{R}\right)  ,\mu\right)$ if we are in Wiener-Poisson space. Now suppose $F$ has mean $0$.  We have the following integration by parts formulas.%
\begin{itemize}

\item If $F\in\operatorname*{Dom}D^{W}\cap\operatorname*{Dom}D^{J}$ and $f\in\mathcal{C}^1$ with Lipschitz first derivative assumed to be bounded (or $f'$ may be unbounded if $F$ has a density),
\begin{align}
\hspace{-1cm} \mathbb{E}\left[  Ff\left(  F\right)  \right]   =\mathbb{E}\left[  \left\langle -DL^{-1}F,DF\right\rangle _\mathfrak{H}f^{\prime
}\left(  F\right)  \right]  +\mathbb{E}\left[  \left\langle -DL^{-1}%
F,\int_0^{DF}f''(F+xu)x(DF-u)du\right\rangle _\mathfrak{H}\right].
\label{eqn_integpartsWP_L}%
\end{align}

\item If $F\in\operatorname*{Dom}D^{W}\cap\operatorname*{Dom}D^{J}$ and $f$ is twice differentiable with bounded first derivative (or $f'$ may be unbounded if $F$ has a density),
\begin{align}
\hspace{-1cm} \mathbb{E}\left[  Ff\left(  F\right)  \right]   =\mathbb{E}\left[  \left\langle -DL^{-1}F,DF\right\rangle _\mathfrak{H}f^{\prime
}\left(  F\right)  \right]  +\mathbb{E}\left[  \left\langle -DL^{-1}%
F,\frac{f''(F+\theta_{\cdot}xDF)}{2}x(DF)^2\right\rangle _\mathfrak{H}\right].
\label{eqn_integpartsWP}%
\end{align}

\item If $F\in\mathbb{D}^{1,2}$ (see Remark \ref{equivalencyW-WP}) and $f$ is differentiable with
bounded derivative (or $f$ is at least a.e. differentiable if $F$ has a density),
\begin{equation}
\mathbb{E}\left[  Ff\left(  F\right)  \right]  =\mathbb{E}\left[  \left\langle
-DL^{-1}F,DF\right\rangle _\mathfrak{H}f^{\prime}\left(  F\right)  \right]  \text{.}
\label{eqn_integpartsW}%
\end{equation}
\end{itemize}

These formulas provide the link to using Malliavin calculus techniques in
solving problems related to Stein's method. Since $F=LL^{-1}F=-\delta
DL^{-1}F$, we have
\[
\mathbb{E}\left[  Ff\left(  F\right)  \right]  =\mathbb{E}\left[  -\delta
DL^{-1}F\cdot f\left(  F\right)  \right]  =\mathbb{E}\left[  \left\langle
-DL^{-1}F,Df\left(  F\right)  \right\rangle _\mathfrak{H}\right]  \text{.}%
\]
A direct application of the chain rule for Wiener-Poisson space, choosing
$k=2$ in (\ref{eqn_chainWP_L}) and (\ref{eqn_chainWP}), yields (\ref{eqn_integpartsWP_L}) and (\ref{eqn_integpartsWP}) respectively, and an
application of the respective chain rule in Wiener space yields
(\ref{eqn_integpartsW}).

\bigskip

Recall that in the Wiener-Poisson case (by Remark \ref{equivalencyW-WP}), if
$F\in\mathbb{D}^{1,2}$ then the chain rule formula (\ref{eqn_chainWP}) will
reduce to the corresponding one in Wiener space. Note that in this paper, we are assuming the target r.v. $Z$ is in $\mathbb{D}^{1,2}$. Consequently, the points we will make about $Z$ will be valid whether we are working in Wiener space or Wiener-Poisson space. The discussion of this subection then applies to both spaces.

\begin{description}
\item[Assumption A] $Z\in\mathbb{D}^{1,2}$ has mean $0$ and support $\left(  l,u\right)  $ with $-\infty\leq l<0<u\leq\infty$. The density $\rho_{\ast}$ of $Z$ is known, and it is continuous in its support. $X$ is either in $\mathbb{D}^{1,2}$ (Wiener space case) or in
$\operatorname*{Dom}D^{W}\cap\operatorname*{Dom}D^{J}$ (Wiener-Poisson space
case), and it also has mean $0$.
\end{description}

\noindent\textbf{Caution:} Notice that in the previous subsection we used
$x\in\mathbb{R}$ to denote the jump component of $z\in\mathbb{R}^{+}%
\times\mathbb{R}$ in our state space. On the other hand, we are using $Z$ to denote the 
target r.v. and $X$ the r.v. with unknown distribution. A confusion
may arise in the usage of $x$ and $X$, or $z$ and $Z$. However, we will stick with current notation for consistency with existing literature. In this regard, we urge
the reader to keep in mind that $x$ represents the size of the jump while $X$
is a random variable \underline{not} (directly) related to $x$. On the other
hand, $z$ is a jump (time of the jump, size of the jump) while $Z$ is the
target r.v. which has no jumps.

\begin{remark}
In some results, we will consider instead of $X$ a sequence $\left\{
X_{n}\right\}  $ of random variables. In this case, we have the same
assumptions (and corresponding functionals, defined below) for each $X_{n}$.
Note that for $Z\in\mathbb{D}^{1,2}$, the support necessarily has to be an
interval (see Theorem 3.1 \cite{NV}, Proposition 2.1.7 \cite{Nualart}), a
consequence that carries over to Wiener-Poisson space. The continuity assumption of the density $\rho_*$ is not strong at all, since general processes like solutions of stochastic differential equations driven by Brownian motion or (under mild conditions) fractional Brownian motion (for example see \cite{FB}) have continuous densities.
\end{remark}

Define the random variable $g_{F}=\mathbb{E}\left[  \left.  \left\langle
DF,-DL^{-1}F\right\rangle _\mathfrak{H}\right|  F\right]  $ for any Malliavin
differentiable r.v. $F$ (we will work with $F=Z$ and $F=X$ later).
Nourdin and Peccati proved that $g_{Z}\geq0$ almost surely (Proposition 3.9,
\cite{NP}). Closely related is the function
\begin{equation}
g_{\ast}\left(  z\right)  :=\mathbb{E}\left[  \left.  \left\langle
DZ,-DL^{-1}Z\right\rangle _\mathfrak{H}\right|  Z=z\right]  \text{.}
\label{eqn_gstarfirst}%
\end{equation}
Trivially, $g_{Z}=g_{\ast}\left(  Z\right)  $. Nourdin and Viens (Theorem 3.1
\cite{NV}) proved that $Z$ has a density if and only if $g_{\ast}\left(
Z\right)  >0$ a.s. Therefore, $g_{\ast}\left(  Z\right)  >0$ a.s. (Assumption
A) and $g_{\ast}\left(  z\right)  >0$ for a.e. $z\in\left(  l,u\right)  $.
Equation (\ref{eqn_integpartsW}) implies $\mathbb{E}\left[  Zf\left(
Z\right)  \right]  =\mathbb{E}\left[  g_{Z}f^{\prime}\left(  Z\right)
\right]  $. In the same manner, in Wiener space ($X\in\mathbb{D}^{1,2}$) we
have $\mathbb{E}\left[  Xf\left(  X\right)  \right]  =\mathbb{E}\left[
g_{X}f^{\prime}\left(  X\right)  \right]  $, while in Wiener-Poisson space
($X\in\operatorname*{Dom}D^{W}\cap\operatorname*{Dom}D^{J}$), this changes to
$\mathbb{E}\left[  Xf\left(  X\right)  \right]  =\mathbb{E}\left[
g_{X}f^{\prime}\left(  X\right)  \right]  +\mathbb{E}[\langle-DL^{-1}%
X,\frac{f^{\prime\prime}(X+\theta_{z}xDX)}{2}x(DX)^{2}\rangle_{H}]$. One needs
to be careful not to write $\mathbb{E}\left[  Xf\left(  X\right)  \right]
=\mathbb{E}\left[  g_{\ast}\left(  X\right)  f^{\prime}\left(  X\right)
\right]  $ or $g_{\ast}\left(  X\right)  =g_{X}$ a.s., both false, since
$g_{\ast}$ is derived from the law of $Z$ (we would need the corresponding
$g_{\ast}$ of $X$ to make it true).

\bigskip

Nourdin and Viens proved (same Theorem 3.1 \cite{NV}) that%
\begin{equation}
g_{\ast}\left(  z\right)  =\frac{%
{\displaystyle\int_{z}^{u}}
y\rho_{\ast}\left(  y\right)  dy}{\rho_{\ast}\left(  z\right)  }=-\frac{%
{\displaystyle\int_{l}^{z}}
y\rho_{\ast}\left(  y\right)  dy}{\rho_{\ast}\left(  z\right)  }\hspace{.5cm} \text{ for
a.e. }z\in\left(  l,u\right)  \text{.} \label{eqn_gstargee}%
\end{equation}

In their proof, they pointed out that $\varphi(z) := \int_z^u y\rho_{\ast}\left(  y\right)  dy = - \int_l^z y\rho_{\ast}\left(  y\right)  dy >0$ for all $z\in (l,u)$. Since $\rho_{\ast}$ is (necessarily) bounded (Assumption A), $\varphi(z)/\rho_{\ast}(z)$ is strictly positive (inside the support). From this point on, we will take $g_{\ast}$ to be either (\ref{eqn_gstarfirst}) or the version (\ref{eqn_gstargee}), whichever suits our purposes. Furthermore, we can assume that $g_{\ast}\left(  z\right)>0$ for every (and not just for almost every) $z\in\left(  l,u\right)  $. Notice that using this definition of $g_*$ we can conclude that $\bigl(g_*(z)\rho_*(z)\bigr)'=\varphi'(z)=-z\rho_*(z)$.

\bigskip

Given the density $\rho_{\ast}$ of $Z$, we can compute $g_{\ast}$ using
(\ref{eqn_gstargee}). Some examples of known distributions with their
$g_{\ast}$ are given in Table \ref{table_geestar}. Recall that $g_{\ast}(z)=0$
outside the support.

\bigskip

Conversely, one can retrieve the density $\rho_{\ast}$ given $g_{\ast}$ using the following noteworthy density formula Nourdin and Viens \cite{NV} proved:
\begin{equation}
\rho_{\ast}\left(  z\right)  =\frac{\mathbb{E}\left|  Z\right|  }{2g_{\ast
}\left(  z\right)  }\exp\left(  -\int_{0}^{z}\frac{y}{g_{\ast}\left(
y\right)  }dy\right)  \text{.} \label{eqn_density}%
\end{equation}

\begin{proposition}
\label{prop_gstarintegral}
$g_{\ast}$ necessarily satisfies the following:
\begin{equation}
\int_{l}^{0}\frac{y}{g_{\ast}\left(  y\right)  }dy=-\infty\qquad\int_{0}%
^{u}\frac{y}{g_{\ast}\left(  y\right)  }dy=\infty\text{.} \label{eqn_ggrowth}%
\end{equation}
\end{proposition}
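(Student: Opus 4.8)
The plan is to exploit the exact relationship between $g_{\ast}$, the density $\rho_{\ast}$, and the auxiliary function $\varphi(z):=\int_{z}^{u}y\rho_{\ast}(y)\,dy$ already introduced in the excerpt, where it was observed that $\varphi(z)=g_{\ast}(z)\rho_{\ast}(z)>0$ on $(l,u)$ and that $\bigl(g_{\ast}(z)\rho_{\ast}(z)\bigr)'=\varphi'(z)=-z\rho_{\ast}(z)$. The crucial point is that $\varphi$ vanishes at both endpoints of the support. Since $Z\in\mathbb{D}^{1,2}\subset L^{2}(\Omega)$ has a finite mean, the map $y\mapsto y\rho_{\ast}(y)$ is integrable, so $\varphi(z)\to0$ as $z\to u^{-}$ because the domain of integration shrinks to a null set; and as $z\to l^{+}$ we get $\varphi(z)\to\int_{l}^{u}y\rho_{\ast}(y)\,dy=\mathbb{E}[Z]=0$ by Assumption A.

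Next I would turn the two stated integrals into a statement about $\varphi$. Writing $I(z):=\int_{0}^{z}\frac{y}{g_{\ast}(y)}\,dy$, I would combine $\varphi'(z)=-z\rho_{\ast}(z)$ with $\rho_{\ast}(z)=\varphi(z)/g_{\ast}(z)$ to obtain $\varphi'(z)/\varphi(z)=-z/g_{\ast}(z)$ on $(l,u)$, which is legitimate since $\varphi>0$ there. Integrating from $0$ to $z$ yields $\varphi(z)=\varphi(0)\exp(-I(z))$ with $\varphi(0)=\mathbb{E}|Z|/2>0$; this is just the density formula (\ref{eqn_density}) multiplied through by $g_{\ast}(z)$. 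Note $\mathbb{E}|Z|>0$ because the support $(l,u)$ with $l<0<u$ forces $Z$ to be nondegenerate, taking both signs with positive probability.

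The conclusion then follows by taking one-sided limits. As $z\to u^{-}$ we have $\exp(-I(z))=\varphi(z)/\varphi(0)\to0$, hence $I(z)\to+\infty$, which is exactly $\int_{0}^{u}\frac{y}{g_{\ast}(y)}\,dy=\infty$. As $z\to l^{+}$ the identical computation gives $I(z)\to+\infty$; rewriting $I(z)=-\int_{z}^{0}\frac{y}{g_{\ast}(y)}\,dy$ and letting $z\to l^{+}$ yields $\int_{l}^{0}\frac{y}{g_{\ast}(y)}\,dy=-\infty$, the second claim.

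The only genuinely delicate points, rather than real obstacles, are bookkeeping: keeping the orientation of $I(z)$ straight on the negative half-line (where $y/g_{\ast}(y)<0$, so that $I(z)>0$ and increases as $z$ decreases toward $l$), and justifying the endpoint limits of $\varphi$ through integrability of $Z$ rather than through any pointwise behavior of $\rho_{\ast}$, which need not vanish at finite endpoints of the support. Beyond these sign and limit verifications I expect no serious difficulty, since the heavy lifting is already supplied by the Nourdin--Viens density formula and the mean-zero normalization.
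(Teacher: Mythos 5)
Your proposal is correct and takes essentially the same route as the paper: both arguments reduce to the identity $\int_{0}^{z}\frac{y}{g_{\ast}(y)}\,dy=\ln\frac{\varphi(0)}{\varphi(z)}$ together with the fact that $\varphi(z)\to0$ as $z\to l^{+}$ and as $z\to u^{-}$. The only difference is that the paper simply cites this identity from Theorem 3.1 of \cite{NV}, whereas you rederive it by integrating $\varphi'/\varphi=-z/g_{\ast}$ (and you also justify the endpoint limits of $\varphi$ and the value $\varphi(0)=\mathbb{E}|Z|/2$), making the proof self-contained but not different in substance.
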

\begin{proof}
With $\varphi(z)$ defined as before, Nourdin and Viens (Theorem 3.1 \cite{NV}) showed that 
\begin{align*}
\int_0^z \frac{y}{g_{\ast}(y)}dy = \ln \frac{\varphi(0)}{\varphi(z)}.
\end{align*} 
Since $\varphi(z) \to 0$ as $z\to u$ and as $z\to l$, the result follows.
\end{proof}

\begin{remark}
\label{rem_geepower} The necessary conditions in Proposition \ref{prop_gstarintegral} are
actually not new. Stein (Lemma VI.3 \cite{Stein}) has pointed out that these
are necessary for (\ref{eqn_gstargee}) to hold. 

\begin{itemize}
\item Suppose $g_{\ast}\left(  x\right)  =\alpha\left(  x-l\right)  ^{p}$ for some constant $\alpha>0$ and the support of $Z$ is $\left(  l,\infty\right)  $. Then $\int_{0}^{\infty
}\frac{x}{g_{\ast}\left(  x\right)  }dx=\infty$ if and only if $p\leq2$, and
$\int_{l}^{0}\frac{x}{g_{\ast}\left(  x\right)  }=-\infty$ if and only if
$1\leq p$. Similarly, if $g_{\ast}\left(  x\right)  =\alpha\left(  u-x\right)
^{q}$ over the support $\left(  -\infty,u\right)  $, $1\leq q\leq2$
necessarily. Also, if $g_{\ast}\left(  x\right)  =O\left(  x^{p}\right)  $ and
the support is $\left(  -\infty,\infty\right)  $, then $p\leq2$. If $g_{\ast
}\left(  x\right)  =\alpha\left(  u-x\right)  ^{q}\left(  x-l\right)  ^{p}$
over the support $\left(  l,u\right)  $, then $p\geq1$ and $q\geq1$ necessarily.

\item Not every $g_{\ast}$ satisfying (\ref{eqn_ggrowth}) will belong to a
random variable in $\mathbb{D}^{1,2}$. For instance, suppose $Z$ is Inverse
Gamma (see Table \ref{table_geestar}) having support $\left(  l,\infty\right)
$. For this random variable, $g_{\ast}\left(  z\right)  =\alpha\left(
z-l\right)  ^{2}$ for some $\alpha>0$. $Z$ can be shown to have finite
variance if and only if $\alpha<1$. Thus, when $\alpha\geq1$,
(\ref{eqn_ggrowth}) is satisfied but $Z\notin L^{2}\left(  \Omega\right)  $ so
$Z\notin\mathbb{D}^{1,2}$. This means that the coverage of our method is not
as extensive as we would like it to be. Hence, it is important that we check
if the target r.v. belongs to $\mathbb{D}^{1,2}$.
\end{itemize}
\end{remark}

\begin{table}[ptbh]
{
\setlength{\extrarowheight}{2.5pt}
\begin{tabular}
[c]{|l|l|l|}\hline
$Z$, with support and parameters & $\rho_{\ast}\left(  z\right)  $ & $g_{\ast
}\left(  z\right)  $\\\hline\hline
Normal & \multirow{2}{*}{$\dfrac{1}{\sqrt{2\pi}\sigma}\exp\left(
-\dfrac{z^{2}}{2\sigma}\right)  $} & \multirow{2}{*}{$\sigma^{2}$}\\
$\left(  -\infty,\infty\right)  $: $\sigma>0$ &  & \\\hline
Gamma & \multirow{2}{*}{$\dfrac{1}{s^{r}\Gamma\left(  r\right)  }\left(
z-l\right)  ^{r-1}\exp\left(  -\dfrac{z-l}{s}\right)  $} & \multirow{2}%
{*}{$s\left(  z-l\right)  $}\\
$\left(  l,\infty\right)  $: $l=-rs$, $r>0$, $s>0$ &  & \\\hline
$\chi^2$ & \multirow{2}{*}{$\dfrac{1}{2^{v/2}\Gamma\left(  v/2\right)
}\left(  z-l\right)  ^{\frac{v}{2}-1}\exp\left(  -\dfrac{z-l}{2}\right)  $} &
\multirow{2}{*}{$2\left(  z-l\right)  $}\\
$\left(  l,\infty\right)  $: $l=-v$, d.f. $v>0$ &  & \\\hline
Exponential & \multirow{2}{*}{$\lambda\exp\left(  -\lambda\left(  z-l\right)
\right)  $} & \multirow{2}{*}{$\dfrac{1}{\lambda}\left(  z-l\right)  $}\\
$\left(  l,\infty\right)  $: $l=-\dfrac{1}{\lambda}$, $\lambda>0$ &  &
\\\hline
Beta & \multirow{2}{*}{$\dfrac{1}{\beta\left(  r,s\right)  }\left(
z-l\right)  ^{r-1}\left(  1+l-z\right)  ^{s-1}$} & \multirow{2}{*}{$\dfrac
{1}{r+s}\left(  z-l\right)  \left(  1+l-z\right)  $}\\
$\left(  l,u\right)  $: $l=-\dfrac{r}{r+s}$, $u=1+l$, $r,s>0$ &  & \\\hline
Pearson Type IV & \multirow{2}{*}{$C\left(  1+\left(  z-t\right)  ^{2}\right)
^{-r}e^{ s\arctan\left(  z-t\right)  }$} & \multirow{2}{*}{$\dfrac{1}{2\left(
r-1\right)  }\left(  1+\left(  z-t\right)  ^{2}\right)  $}\\
$\left(  -\infty,\infty\right)  $: $t=-\dfrac{s}{2\left(  r-1\right)  }$,
$r>\dfrac{3}{2}$ &  & \\\hline
Student's T & \multirow{2}{*}{$\dfrac{\Gamma\left(  \frac{v+1}{2}\right)
}{\sqrt{v\pi}\Gamma\left(  \frac{v}{2}\right)  }\left(  1+\dfrac{z^{2}}
{v}\right)  ^{-\frac{v+1}{2}}$} & \multirow{2}{*}{$\dfrac{v}{v-1}\left(
1+\dfrac{z^{2}}{v}\right)  $}\\
$\left(  -\infty,\infty\right)  $: d.f. $v>2$ &  & \\\hline
Inverse Gamma & \multirow{2}{*}{$\dfrac{s^{r-1}}{\Gamma\left(  r-1\right)
}\left(  z-l\right)  ^{-r}\exp\left(  -\dfrac{s}{z-l}\right)  $} &
\multirow{2}{*}{$\dfrac{1}{r-2}\left(  z-l\right)  ^{2}$}\\
$\left(  l,\infty\right)  $: $l=-\dfrac{s}{r-1}$, $r>3$, $s>0$ &  & \\\hline
Uniform & \multirow{2}{*}{$\dfrac{1}{2u}$} & \multirow{2}{*}{$\dfrac{1}%
{2}\left(  u^{2}-z^{2}\right)  $}\\
$\left(  l,u\right)  $: $u=-l>0$ &  & \\\hline
Pareto & \multirow{2}{*}{$\dfrac{c\left(  -l\right)  ^{c}\left(  c-1\right)
^{c}}{\left(  z-cl\right)  ^{c+1}}$} & \multirow{2}{*}{$\dfrac{1}{c-1}\left(
z-l\right)  \left(  z-cl\right)  $}\\
$\left(  l,\infty\right)  $: $c>2$, $l<0$ &  & \\\hline
Laplace & \multirow{2}{*}{$\dfrac{c}{2}\exp\left(  -c\left|  z\right|
\right)  $} & \multirow{2}{*}{$\dfrac{1}{c^{2}}\left(  1+c\left|  z\right|
\right)  $}\\
$\left(  -\infty,\infty\right)  $: $c>0$ &  & \\\hline
Lognormal & $\dfrac{-l}{\sqrt{2\pi}\sigma e^{2\delta}}\exp\left(  -\dfrac
{1}{2}\left[  p\left(  z\right)  +\sigma\right]  ^{2}\right)  $ & $\sigma
e^{2\delta}\exp\left(  \frac{1}{2}\left(  p\left(  z\right)  +\sigma\right)
^{2}\right)  $\\
$\left(  l,\infty\right)  $: $l=-\exp\left(  \delta+\frac{1}{2}\sigma
^{2}\right)  $ & where $p\left(  z\right)  =\dfrac{\ln\left(  z-l\right)
-\delta}{\sigma}$ & $\times{\displaystyle\int_{p\left(  z\right)  -\sigma
}^{p\left(  z\right)  }} e^{-s^{2}/2}ds$\\\hline
\end{tabular}
}
\caption{Common random variables $Z$ with their $\rho_{\ast}$ and $g_{\ast}$}
\label{table_geestar}
\end{table}

Parallel to $g_{\ast}$ of $Z$, we may define a corresponding object for $X$
but we will have no use for it. In fact, we typically won't have access to the
density of $X$; if it was known otherwise, one may characterize $X$ without
having to approximate it by $Z$. We will only assume properties (for $X$)
amenable to the Malliavin calculus that would allow us to define $g_{X}$.

\bigskip

Let $G_{\ast}\left(  z\right)  =\int_{l}^{z}g_{\ast}\left(
y\right)  dy$ be the indefinite integral of $g_{\ast}$ (assuming $g_{\ast}\in L^1(l,u)$).
Consider the Wiener space case ($X\in\mathbb{D}^{1,2}$) and suppose $\left\|
g_{\ast}\right\|  _{\infty}<\infty$ or $X$ has a density. If we take $f=G_{\ast}$ in (\ref{eqn_integpartsW}), then%
\begin{equation}
\mathbb{E}\left[  g_{X}g_{\ast}\left(  X\right)  \right]  =\mathbb{E}\left[  G_{\ast
}\left(  X\right)  X\right]  \text{.} \label{eqn_biggstar}%
\end{equation}

\begin{description}
\item[Assumption A$^{\prime}$] Along with Assumption A, either $\left\|
g_{\ast}\right\|  _{\infty}<\infty$ or $X$ has a density.
\end{description}

\begin{proposition}\label{lem_expec}(Moments formula)
\begin{itemize}

\item {\bf Wiener space:} $\mathbb{E}\left[  F^{r+1}\right]  =r\mathbb{E}\left[
F^{r-1}g_{F}\right]$, provided the expectations exist.
\begin{enumerate}

\item If $g_*(Z)=g_{Z}$ is a polynomial in $Z$, i.e. $g_*(z)=\sum_{k=0}^{m}a_{k}z^{k}%
$, then $\mathbb{E}\left[  Z^{r+1}\right]  =\sum_{k=0}^{m}ra_{k}%
\mathbb{E}\left[  Z^{r+k-1}\right]  $.

\item If $X=I_{q}\left(  g\right)  $, then $\mathbb{E}\left[  X^{r+1}\right]
=\frac{r}{q}\mathbb{E}\left[  X^{r-1}\left\|  DX\right\|  _\mathfrak{H}^{2}\right]  $.

\end{enumerate}

\item {\bf Wiener-Poisson space:} $\mathbb{E}\left[  F^{r+1}\right]  = r\mathbb{E}\left[  F^{r-1}g_F\right]  +\frac{r\left(r-1\right)}{2}\mathbb{E}\left[\langle-DL^{-1}F,x(F+\theta_{\cdot}xDF)^{r-2}(DF)^2\rangle_\mathfrak{H}\right]$
\begin{enumerate}

\item If $X=I_{q}\left(  g\right)  $, then $\mathbb{E}\left[  X^{r+1}\right]  =\frac{r}{q}\mathbb{E}\left[  X^{r-1}\left\|  DX\right\|  _\mathfrak{H}^{2}\right]  +\frac{r\left(  r-1\right)  }{2q}\mathbb{E}\left[  \left\langle x\left(  DX\right)  ^{3},\left(X+\theta_{\cdot}xDX\right)  ^{r-2}\right\rangle _\mathfrak{H}\right]  \text{.}$

\end{enumerate}

\end{itemize}
\end{proposition}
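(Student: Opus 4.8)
The plan is to obtain every identity by feeding the monomial $f(t)=t^{r}$ into the appropriate integration-by-parts formula and then collapsing the inner product against a $\sigma(F)$-measurable factor into $g_{F}$ via the tower property. For the Wiener-space identity I would take $f(t)=t^{r}$, so $f'(t)=rt^{r-1}$, in (\ref{eqn_integpartsW}), giving
$$\mathbb{E}\left[F^{r+1}\right]=\mathbb{E}\left[F\cdot F^{r}\right]=r\,\mathbb{E}\left[\left\langle -DL^{-1}F,DF\right\rangle_\mathfrak{H} F^{r-1}\right].$$
Since $F^{r-1}$ is $\sigma(F)$-measurable, conditioning on $F$ and using the definition $g_{F}=\mathbb{E}[\langle DF,-DL^{-1}F\rangle_\mathfrak{H}\mid F]$ yields $\mathbb{E}[F^{r+1}]=r\,\mathbb{E}[F^{r-1}g_{F}]$, which is the claimed formula.

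The two sub-cases then follow by specialization. For case 1, set $F=Z$ and use $g_{Z}=g_{\ast}(Z)=\sum_{k=0}^{m}a_{k}Z^{k}$; substituting and expanding the finite sum gives $\mathbb{E}[Z^{r+1}]=\sum_{k=0}^{m}ra_{k}\mathbb{E}[Z^{r+k-1}]$ directly. For case 2, when $X=I_{q}(g)$ one has $L^{-1}X=-\tfrac1q X$, hence $-DL^{-1}X=\tfrac1q DX$ and $\langle DX,-DL^{-1}X\rangle_\mathfrak{H}=\tfrac1q\|DX\|_\mathfrak{H}^{2}$; plugging this into the main formula (once more using the tower property against the $\sigma(X)$-measurable factor $X^{r-1}$) produces $\mathbb{E}[X^{r+1}]=\tfrac{r}{q}\mathbb{E}[X^{r-1}\|DX\|_\mathfrak{H}^{2}]$.

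For the Wiener--Poisson identity I would repeat the argument with (\ref{eqn_integpartsWP}) in place of (\ref{eqn_integpartsW}), again with $f(t)=t^{r}$ so that $f''(t)=r(r-1)t^{r-2}$. The first term reproduces $r\mathbb{E}[F^{r-1}g_{F}]$ exactly as before, while the second term is
$$\frac{r(r-1)}{2}\,\mathbb{E}\bigl[\left\langle -DL^{-1}F,\,x(F+\theta_{\cdot}xDF)^{r-2}(DF)^{2}\right\rangle_\mathfrak{H}\bigr],$$
which is precisely the stated jump correction. The sub-case $X=I_{q}(g)$ follows by substituting $-DL^{-1}X=\tfrac1q DX$ throughout: the first term becomes $\tfrac{r}{q}\mathbb{E}[X^{r-1}\|DX\|_\mathfrak{H}^{2}]$, and after factoring the scalar $1/q$ out of the inner product and rewriting the integrand via $\langle DX,\,x(X+\theta_{\cdot}xDX)^{r-2}(DX)^{2}\rangle_\mathfrak{H}=\langle x(DX)^{3},\,(X+\theta_{\cdot}xDX)^{r-2}\rangle_\mathfrak{H}$, the second term becomes $\tfrac{r(r-1)}{2q}\mathbb{E}[\langle x(DX)^{3},(X+\theta_{\cdot}xDX)^{r-2}\rangle_\mathfrak{H}]$.

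The main obstacle is legitimizing the use of the unbounded monomial $f(t)=t^{r}$ in the integration-by-parts formulas, whose hypotheses require a bounded first derivative unless $F$ has a density. When $F$ admits a density—the relevant case for $Z$ under Assumption A, and for $X=I_{q}(g)$ with $q\ge1$—the relaxed a.e.-differentiability versions of (\ref{eqn_integpartsWP}) and (\ref{eqn_integpartsW}) apply to the smooth $f$ immediately. In general I would justify the identities by truncation: replace $t^{r}$ by a $C^{\infty}$ function agreeing with it on $[-N,N]$ and having bounded derivatives, apply the formula, and pass to the limit $N\to\infty$ by dominated convergence, the necessary control being supplied exactly by the hypothesis that the relevant moments exist. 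A secondary point needing care in the Wiener--Poisson computation is that $\theta_{\cdot}\in(0,1)$ is the random, implicitly defined quantity produced by the mean-value form of the chain rule (\ref{eqn_chainWP}); it is carried through symbolically and never needs to be evaluated.
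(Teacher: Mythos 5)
Your proposal is correct and follows essentially the same route as the paper's own (one-line) proof: substitute $f(y)=y^{r}$ into the integration-by-parts formulas (\ref{eqn_integpartsW}) and (\ref{eqn_integpartsWP}), collapse the inner product to $g_{F}$ by conditioning, and use $-DL^{-1}X=\tfrac{1}{q}DX$ for $X=I_{q}(g)$. Your additional truncation/density argument justifying the unbounded monomial is a detail the paper silently omits, and it is a welcome strengthening rather than a deviation.
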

\begin{proof}
Simply let $f\left(y\right)=y^{n}$ in (\ref{eqn_integpartsWP}) and (\ref{eqn_integpartsW}), taking into account the
fact that $-DL^{-1}F=\frac{1}{q}DF$ when $F=I_q(g)$.
\end{proof}

\section{\label{sec_Stein}Stein's method and the Stein equation}

Stein's method is a set of procedures that is often used to measure distances
between random variables such as $X$ and $Z$. More precisely, we're measuring
the distance between the laws of $X$ and $Z$. These distances take the form
\begin{equation}
d_{\mathcal{H}}\left(  X,Z\right)  =\sup_{h\in\mathcal{H}}\left|
\mathbb{E}\left[  h\left(  X\right)  \right]  -\mathbb{E}\left[  h\left(
Z\right)  \right]  \right|  \label{eqn_dmetric}%
\end{equation}
where $\mathcal{H}$ is a suitable family of functions. If we take
$\mathcal{H}_{W}=\left\{  h:\left\|  h\right\|  _{L}\leq1\right\}  $ where
$\left\|  \cdot\right\|  _{L}$ is the Lipschitz seminorm, then $d_{W}%
=d_{\mathcal{H}_{W}}$ is called Wasserstein distance. The bounded Wasserstein
(Fortet-Mourier) distance corresponds to $\mathcal{H}_{FM}=\left\{  h:\left\|
h\right\|  _{L}+\left\|  h\right\|  _{\infty}\leq1\right\}  $. Clearly,
$d_{FM}\leq d_{W}$. $d_{FM}$ is important because it metrizes convergence in
distribution: $d_{FM}\left(  X_{n},Z\right)  \rightarrow0$ if and only if
$X_{n}\overset{\text{Law}}{\longrightarrow}Z$. $d_{W}$ on the other hand
induces a topology stronger than that of convergence in distribution. 

\bigskip

Nourdin and Peccati \cite{NP} mentioned other useful metrics. We have the Total Variation distance when $\mathcal{H}_{TV}=\left\{  \mathbf{1}_B: B \text{ is Borel}\right\}$ and the Kolmogorov distance when $\mathcal{H}_{K}=\left\{  \mathbf{1}_{(-\infty,z]}: z\in\mathbb{R}\right\}$. The latter for example is suited for the analysis of probability tails. However, in this paper, we will only consider $d_W$ and $d_{FM}$ as we try to find bounds for $d_{\mathcal{H}}\left(  X,Z\right)$ by exploiting properties of Lipschitz functions $h\in\mathcal{H}$.

\bigskip

A Stein equation is at the root of Stein's method. Given $Z$ and a test
function $h$, the Stein equation is the differential equation
\begin{equation}
g_{\ast}\left(  x\right)  f^{\prime}\left(  x\right)  -xf\left(  x\right)
=h\left(  x\right)  -\mathbb{E}\left[  h\left(  Z\right)  \right]
\label{eqn_steineqn}%
\end{equation}
having solution $f=f_{h}$. If the law of $X$ is ``close'' to the law of $Z$,
then we expect $\mathbb{E}\left[  h\left(  X\right)  \right]  -\mathbb{E}%
\left[  h\left(  Z\right)  \right]  $ to be close to $0$, for $h$ belonging to
a large class of functions. Consequently, $\mathbb{E}\left[  g_{\ast}\left(
X\right)  f^{\prime}\left(  X\right)  -Xf\left(  X\right)  \right]  $ would
have to be close to $0$. In fact, subject to certain technical conditions, the
left-hand side of equation (\ref{eqn_steineqn}) provides a characterization of
the law of $Z$: $\mathbb{E}\left[  g_{\ast}\left(  X\right)  f^{\prime}\left(
X\right)  -Xf\left(  X\right)  \right]  =0$ if and only if $X\overset
{\text{Law}}{=}Z$ (in the equation, information about the law of $Z$ is coded
in $g_{\ast}$). The following proposition states this result in its precise
form. For a quick proof, see Proposition 6.4 in \cite{NP}. The first statement
is Lemma 1 in \cite{Stein} by Stein.

\begin{lemma}
(Stein's Lemma)
\begin{enumerate}
\item If $f$ is continuous, piecewise continuously differentiable, and
$\mathbb{E}\left[  g_{\ast}\left(  Z\right)  \left|  f^{\prime}\left(
Z\right)  \right|  \right]  <\infty$, then
\begin{equation}
\mathbb{E}\left[  g_{\ast}\left(  Z\right)  f^{\prime}\left(  Z\right)
-Zf\left(  Z\right)  \right]  =0\text{.}%
\end{equation}

\item If for every differentiable $f$, $x\mapsto\left|  g_{\ast}\left(
x\right)  f^{\prime}\left(  x\right)  \right|  +\left|  xf\left(  x\right)
\right|  $ is bounded and
\begin{equation}
\mathbb{E}\left[  g_{\ast}\left(  X\right)  f^{\prime}\left(  X\right)
-Xf\left(  X\right)  \right]  =0\text{,}%
\end{equation}
then $X\overset{\text{Law}}{=}Z$.
\end{enumerate}
\end{lemma}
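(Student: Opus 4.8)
The statement is Stein's Lemma, a two-way characterization of the law of $Z$ via the operator $f \mapsto g_\ast f' - \mathrm{id}\cdot f$. The plan is to handle the two parts separately, since they go in opposite directions and use different tools.

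For part (1), I would prove the identity $\mathbb{E}[g_\ast(Z)f'(Z) - Zf(Z)] = 0$ directly by an integration-by-parts argument against the density $\rho_\ast$. The key is the relation noted just after equation (\ref{eqn_gstargee}), namely $(g_\ast(z)\rho_\ast(z))' = -z\rho_\ast(z)$. First I would write $\mathbb{E}[Zf(Z)] = \int_l^u z f(z) \rho_\ast(z)\,dz$ and substitute $z\rho_\ast(z) = -(g_\ast \rho_\ast)'(z)$, so that the integral becomes $\int_l^u f(z)\,(g_\ast\rho_\ast)'(z)\,dz$ up to sign. Integrating by parts moves the derivative onto $f$, producing $\int_l^u f'(z) g_\ast(z)\rho_\ast(z)\,dz = \mathbb{E}[g_\ast(Z)f'(Z)]$ together with a boundary term $[f(z)g_\ast(z)\rho_\ast(z)]_l^u$. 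The main obstacle in this direction is showing the boundary term vanishes: one must verify that $f(z)g_\ast(z)\rho_\ast(z) = f(z)\varphi(z) \to 0$ as $z\to l$ and $z\to u$, where $\varphi(z) = \int_z^u y\rho_\ast(y)\,dy$. Since $\varphi(z)\to 0$ at both endpoints (as used in the proof of Proposition \ref{prop_gstarintegral}) and the hypothesis $\mathbb{E}[g_\ast(Z)|f'(Z)|]<\infty$ together with the piecewise differentiability of $f$ controls the growth of $f$, this limit can be justified; the piecewise-continuous-differentiability hypothesis is exactly what lets us split the integration by parts across the points where $f'$ may jump.

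For part (2), I would argue that $\mathbb{E}[g_\ast(X)f'(X) - Xf(X)] = 0$ for a rich enough class of $f$ forces $X\overset{\text{Law}}{=}Z$. The standard route is to fix an arbitrary bounded test function $h$ (say $h = \mathbf{1}_{(-\infty,a]}$ or a bounded Lipschitz $h$) and let $f = f_h$ be the solution of the Stein equation (\ref{eqn_steineqn}) for that $h$. Evaluating the hypothesis at $f = f_h$ gives $\mathbb{E}[h(X)] - \mathbb{E}[h(Z)] = \mathbb{E}[g_\ast(X)f_h'(X) - Xf_h(X)] = 0$, and since this holds for a separating class of $h$, we conclude equality in law. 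The main obstacle here is ensuring $f_h$ is an admissible test function for the hypothesis: one needs $f_h$ to be differentiable with $x\mapsto |g_\ast(x)f_h'(x)| + |xf_h(x)|$ bounded, which is precisely the regularity built into the hypothesis of part (2), and which is supplied by the universal bounds on the solution of the Stein equation developed in Section \ref{sec_Stein}. I would note that the cleanest path is simply to invoke the solvability and boundedness properties of the Stein equation rather than reprove them, and to reference Proposition 6.4 in \cite{NP} for the remaining routine verification that the chosen class of $h$ separates distributions.
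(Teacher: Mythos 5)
The paper offers no proof of this lemma at all: it is stated as known, with part (1) attributed to Lemma 1 in \cite{Stein} and the full statement to Proposition 6.4 in \cite{NP}. Your proposal reconstructs the standard argument behind those citations, and its two halves are the right ones: part (1) by moving the derivative off $f$ via the relation $\bigl(g_{\ast}(z)\rho_{\ast}(z)\bigr)'=-z\rho_{\ast}(z)$, and part (2) by feeding the Stein-equation solutions $f_h$, for a separating class of test functions $h$, into the hypothesis. So your route is correct in outline; what it buys over the paper's bare citation is that it makes visible how the ingredients already recorded in the paper (the identity $\varphi'=-z\rho_{\ast}$ and the vanishing of $\varphi$ at both endpoints, used in Proposition \ref{prop_gstarintegral}) suffice to prove the characterization.

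Three points would need tightening to make this a complete proof. First, in part (1) the vanishing of the boundary term $f(z)\varphi(z)$ does not follow from vague ``growth control of $f$''; the clean justification is dominated convergence: writing $|f(z)|\leq|f(0)|+\int_0^z|f'(t)|\,dt$ for $z>0$ and using that $\varphi$ is decreasing on $(0,u)$, one dominates $\varphi(z)|f'(t)|\mathbf{1}_{[0,z]}(t)$ by $\varphi(t)|f'(t)|$, which is integrable precisely because $\mathbb{E}\left[g_{\ast}(Z)|f'(Z)|\right]=\int_l^u\varphi(t)|f'(t)|\,dt<\infty$; the endpoint $l$ is symmetric. (Alternatively, a Fubini argument applied to $\int_l^u f'\varphi$ avoids boundary terms altogether and simultaneously shows that $\mathbb{E}\left[Zf(Z)\right]$ is well defined, using $\mathbb{E}[Z]=0$.) Second, in part (2) the admissibility of $f_h$ is not ``supplied by the universal bounds of Section \ref{sec_Stein}'': Theorem \ref{prop_fprimebound} assumes Assumption B, which the present lemma does not, and it bounds only $\|f'\|_{\infty}$, whereas the hypothesis requires boundedness of $|g_{\ast}(x)f_h'(x)|+|xf_h(x)|$. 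This must be checked directly on the explicit solution (\ref{eqn_f}), which is exactly what Proposition 6.4 in \cite{NP} does; so your fallback of citing that proposition is the correct move, but the appeal to Section \ref{sec_Stein} is a misattribution. Third, take the separating class to consist of bounded Lipschitz $h$ rather than $h=\mathbf{1}_{(-\infty,a]}$: part (2) quantifies only over differentiable $f$, and the solution associated with an indicator test function fails to be differentiable at the jump point.
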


\bigskip

\noindent Let $\mathcal{H=H}_{FM}$ or $\mathcal{H=H}_{W}$. Using (\ref{eqn_steineqn}) on
(\ref{eqn_dmetric}), we have
\begin{equation}
d_{\mathcal{H}}\left(  X,Z\right)  \leq\sup_{f\in\mathscr{F}_{\mathcal{H}}}\left|
\mathbb{E}\left[  g_{\ast}\left(  X\right)  f^{\prime}\left(  X\right)
-Xf\left(  X\right)  \right]  \right|  \label{eqn_firstbound}%
\end{equation}
where the sup is taken over the family $\mathscr{F}_{\mathcal{H}}$ of all Stein equation
solutions $f$ corresponding to $h\in\mathcal{H}$. Here the integration by
parts formulas (\ref{eqn_integpartsWP_L}) and (\ref{eqn_integpartsW}) allow us
to rewrite the term $\mathbb{E}[Xf(X)]$ in terms of the derivatives of $f$ and
the r.v. $g_{X}$, as we pointed out before. For instance, in Wiener space,%
\begin{equation}
d_{\mathcal{H}}\left(  X,Z\right)  \leq\sup_{f\in\mathscr{F}_{\mathcal{H}}}\left|
\mathbb{E}\left[  g_{\ast}\left(  X\right)  f^{\prime}\left(  X\right)
-g_{X}f^{\prime}\left(  X\right)  \right]  \right|  =\sup_{f\in\mathscr{F}_{\mathcal{H}}%
}\left|  \mathbb{E}\left[  f^{\prime}\left(  X\right)  \left(  g_{\ast}\left(
X\right)  -g_{X}\right)  \right]  \right|  \text{.} \label{eqn_basicbound}%
\end{equation}
Thus, to ensure that the distance between $X$ and $Z$ is small, $g_{\ast
}\left(  X\right)  $ should be close to $g_{X}$. We also need to have a good
control of $f^{\prime}\left(  X\right)  $. One way of addressing this, taking
note of Corollary 6.5 in \cite{NP}, is by assuming a universal bound for
$\mathbb{E}\left[  f^{\prime}\left(  X\right)  ^{2}\right]  $ for all
$f\in\mathscr{F}_{\mathcal{H}}$ since
\begin{equation}
d_{\mathcal{H}}\left(  X,Z\right)  \leq\sqrt{\sup_{f\in\mathscr{F}_{\mathcal{H}}}%
\mathbb{E}\left[  f^{\prime}\left(  X\right)  ^{2}\right]  }\times
\sqrt{\mathbb{E}\left[  \left(  g_{\ast}\left(  X\right)  -g_{X}\right)
^{2}\right]  }\text{.}%
\end{equation}

The first factor is intractable since it requires us to consider conditions on
$X$ in relation to all members $f$ of the family $\mathscr{F}_{\mathcal{H}}$. If however we
have a uniform bound for $f^{\prime}$, then we can avoid imposing an
additional restriction on $X$. In this case, we only need worry about how
close $g_{\ast}\left(  X\right)  $ is to $g_{X}$ in $L^{2}\left(
\Omega\right)  $. In fact, such a bound allows us to just consider how close $g_{\ast}\left(  X\right)  $ is to $g_{X}$ in $L^{1}\left(  \Omega\right)  $. It is then interesting to see how information about
the law of $Z$ is contained in its Malliavin derivative. Questions of how
close the law of $X$ is to that of $Z$ is passed on to how close
$\mathbb{E}\left[  \left.  \left\langle DZ,-DL^{-1}Z\right\rangle_\mathfrak{H} \right|
Z=X\right]  $ is to $\mathbb{E}\left[  \left.  \left\langle DX,-DL^{-1}%
X\right\rangle_\mathfrak{H} \right|  X\right]  $. Notice though that this discussion needs
to be modified slightly in Wiener-Poisson space, since the integration by
parts formula (\ref{eqn_integpartsWP_L}) involves also the second derivative. Thus, we need to control (in a uniform way) both the first and second derivatives of the solution of the Stein equation. Due to this extra requirement, as will be seen later,
we will not be able to apply our tools to as wide a scope of target r.v. $Z$,
as we would be able to do in Wiener space.

\subsection{Bound for $f^{\prime}$}\label{fprime}

{\it {\bf The Normal case in Wiener space:}\\
If $Z$ is standard Normal ($g_*(z)=1$), the Stein equation is $f^{\prime}\left(  x\right)
-xf\left(  x\right)  =h\left(  x\right)  -\mathbb{E}\left[  h\left(  Z\right)
\right]  $ and it has solution $f\left(  x\right)  =e^{x^{2}/2}\int_{-\infty
}^{x}\left[  h\left(  y\right)  -\mathbb{E}\left[  h\left(  Z\right)  \right]
\right]  e^{-y^{2}/2}dy$. Stein proved (Lemma
II.3 in \cite{Stein}) that $\left\|  f^{\prime}\right\|  _{\infty}%
\leq2\left\|  h-\mathbb{E}\left[  h\left(  Z\right)  \right]  \right\|
_{\infty}$. In fact, $\left\|  f^{\prime}\right\|  _{\infty}\leq\min\left\{
2\left\|  h-\mathbb{E}\left[  h\left(  Z\right)  \right]  \right\|  _{\infty
},4\left\|  h^{\prime}\right\|  _{\infty}\right\}  $ (see Lemma 2.3
\cite{CB}). For $h\in\mathcal{H}_{FM}$, $\left\|  f^{\prime}\right\|
_{\infty}\leq4$. It follows from  (\ref{eqn_basicbound}) that $d_{FM}\left(  X,Z\right)  \leq k\mathbb{E}\left[
\left|  1-g_{X}\right|  \right]  \leq k\sqrt{\mathbb{E}\left[  \left(
1-g_{X}\right)  ^{2}\right]  }$ with $k=4$. Similar estimates for
$h\in\mathcal{H}_{W}$ lead to a bound for $d_{W}$ of the same form but with
$k=1$ (Lemma 4.2 \cite{C}, Lemma 1.2 \cite{NP}). How close the law of $X$ is
to the standard Normal law depends on how close $g_{X}$ is to $g_{Z}=1$ (in the
$L^{1}$ sense).}

\bigskip

\noindent In the general case, the Stein equation (\ref{eqn_steineqn}) has solution
\begin{equation}
f\left(  x\right)  =\frac{1}{g_{\ast}\left(  x\right)  \rho_{\ast}\left(
x\right)  }\int_{l}^{x}\left[  h\left(  y\right)  -m_{h}\right]  \rho_{\ast
}\left(  y\right)  dy=\frac{-1}{g_{\ast}\left(  x\right)  \rho_{\ast}\left(
x\right)  }\int_{x}^{u}\left[  h\left(  y\right)  -m_{h}\right]  \rho_{\ast
}\left(  y\right)  dy%
\end{equation}
for
$x\in\left(  l,u\right)  $, where $m_{h}:=\mathbb{E}\left[  h\left(  Z\right)  \right]  $.

\bigskip

The proof of the bound for $f^{\prime}$ when $Z$ is Normal can be adapted to
find a constant bound for $g_{\ast}f^{\prime}$ in the non-Normal case. If
$g_{\ast}$ is uniformly bounded below by a positive number, we easily get a uniform
bound for $f^{\prime}$. Unfortunately, this is not always the case. In Table
\ref{table_geestar} we can see several examples of target r.v.'s for which
$g_{\ast}$ can get arbitrarily close to $0$ in its support (for example, when
$Z$ is Gamma and $g_{\ast}\left(  z\right)  =s\left(  z-l\right)  _{+}$).
Kusuoka and Tudor in \cite{KT} (Proposition 3) proved the following
proposition to address this issue. We state it in the following form using
notation and assumptions we have set.

\begin{lemma}
\label{prop_tudorbound}Suppose we have the following conditions on $g_{\ast}$.

\begin{enumerate}

\item If $u<\infty$, then $\varliminf_{x\rightarrow u}g_{\ast}\left(
x\right)  /\left(  u-x\right)  >0$.

\item If $l>-\infty$, then $\varliminf_{x\rightarrow l}g_{\ast}\left(
x\right)  /\left(  x-l\right)  >0$.

\item If $u=\infty$, then $\varliminf_{x\rightarrow u}g_{\ast}\left(
x\right)  >0$.

\item If $l=-\infty$, then $\varliminf_{x\rightarrow l}g_{\ast}\left(
x\right)  >0$.

\end{enumerate}
Then the solution $f$ of the Stein equation (\ref{eqn_steineqn}), for a given
test function $h$ with $\left\|  h\right\|  _{\infty}<\infty$ and $\left\|
h^{\prime}\right\|  _{\infty}<\infty$, has derivative bounded as follows:%
\begin{equation}
\left\|  f^{\prime}\right\|  _{\infty}\leq k\left(  \left\|  h\right\|
_{\infty}+\left\|  h^{\prime}\right\|  _{\infty}\right)
\label{eqn_fprimeboundtudor}%
\end{equation}
where the constant $k$ depends on $Z$ alone, and not on $h$.
\end{lemma}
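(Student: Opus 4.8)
The plan is to work directly from the explicit solution of the Stein equation and the identity $\varphi := g_\ast\rho_\ast$ (so that $\varphi(x)=\int_x^u y\rho_\ast(y)\,dy=-\int_l^x y\rho_\ast(y)\,dy$ and $\varphi'(x)=-x\rho_\ast(x)$, as noted before the table), reducing everything to a careful endpoint analysis. Writing $m_h=\mathbb{E}[h(Z)]$, the cleanest expression for the derivative comes from the Stein equation itself:
\[
f'(x)=\frac{x f(x)+h(x)-m_h}{g_\ast(x)},
\]
so I only need a uniform bound on the numerator strong enough to absorb the possible vanishing of $g_\ast$ in the denominator. Since $|m_h|\le\|h\|_\infty$ and $\int_l^u\rho_\ast=1$, the primitive $H(x):=\int_l^x[h(y)-m_h]\rho_\ast(y)\,dy$ satisfies $|H|\le 2\|h\|_\infty$ and $H(l)=H(u)=0$. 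Because $\varphi(x)=\int_x^u y\rho_\ast\ge x\int_x^u\rho_\ast$ for $x>0$ (and symmetrically $\varphi(x)\ge|x|\int_l^x\rho_\ast$ for $x<0$), one gets the basic decay estimate $|f(x)|=|H(x)|/\varphi(x)\le 2\|h\|_\infty/|x|$, hence $|xf(x)|\le 2\|h\|_\infty$ away from $0$. On any compact subinterval of $(l,u)$, continuity and positivity make $\varphi$ and $g_\ast$ bounded below by positive constants, so $f$ and therefore $f'$ are trivially bounded there; the whole difficulty lives near the two endpoints.

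I would next dispose of the infinite endpoints. If $u=\infty$, condition 3 gives $g_\ast(x)\ge c>0$ for large $x$, and the crude estimates bound the numerator by $|x||f(x)|+|h(x)-m_h|\le 4\|h\|_\infty$, so $|f'(x)|\le 4\|h\|_\infty/c$; the case $l=-\infty$ is identical via condition 4. These regions need only $\|h\|_\infty$.

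The crux, and the step I expect to be the main obstacle, is the finite endpoints, where $g_\ast$ vanishes (at most linearly, by the conditions) and the crude numerator bound is useless: one must exhibit a matching vanishing factor in the numerator, and this is where $\|h'\|_\infty$ enters. For a finite upper endpoint, substituting $f(x)=-\varphi(x)^{-1}\int_x^u[h(y)-m_h]\rho_\ast(y)\,dy$ and $\varphi(x)=\int_x^u y\rho_\ast(y)\,dy$ and regrouping the integrand yields the identity
\[
x f(x)+h(x)-m_h=\frac{1}{\varphi(x)}\int_x^u\Bigl[\,y\bigl(h(x)-h(y)\bigr)+(y-x)\bigl(h(y)-m_h\bigr)\Bigr]\rho_\ast(y)\,dy.
\]
For $y\in(x,u)$ the Lipschitz bound $|h(x)-h(y)|\le\|h'\|_\infty(y-x)$, together with $y\le u$, $y-x\le u-x$ and $|h(y)-m_h|\le 2\|h\|_\infty$, makes the bracket $O(u-x)$, namely at most $(u-x)\bigl(u\|h'\|_\infty+2\|h\|_\infty\bigr)$. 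Combined with $\int_x^u\rho_\ast\le\varphi(x)/x$ this gives $|xf(x)+h(x)-m_h|\le (u-x)\bigl(u\|h'\|_\infty+2\|h\|_\infty\bigr)/x$. Dividing by $g_\ast(x)$ and invoking condition 1 in the form $(u-x)/g_\ast(x)\le 1/c$ near $u$ (with $x$ bounded below by $u/2>0$) produces a bound $k_u\bigl(\|h\|_\infty+\|h'\|_\infty\bigr)$. The delicate point here is that the algebraic cancellation producing the factor $(y-x)$ is exactly what is needed to cancel the linear factor $(u-x)\sim g_\ast(x)$; without it the bound diverges.

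The finite lower endpoint is handled symmetrically, starting from $f(x)=\varphi(x)^{-1}\int_l^x[h(y)-m_h]\rho_\ast\,dy$ and the mirror identity whose integrand is $x\bigl(h(y)-h(x)\bigr)+(x-y)\bigl(h(x)-m_h\bigr)$, which is $O(x-l)$; condition 2 in the form $(x-l)/g_\ast(x)\le 1/c'$ then closes the estimate. Finally I would collect the finitely many regional bounds into a single constant $k$ that is the maximum of the constants from the compact middle, the (at most two) infinite endpoints, and the (at most two) finite endpoints. Each of these depends only on $l$, $u$, the liminf constants in conditions 1--4, and the infima of $\varphi$ and $g_\ast$ on the compact middle, i.e.\ only on $Z$ and not on $h$, while the $h$-dependence is linear through $\|h\|_\infty+\|h'\|_\infty$, giving exactly \eqref{eqn_fprimeboundtudor}.
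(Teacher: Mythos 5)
Your proof is correct, but note that the paper itself does not prove Lemma \ref{prop_tudorbound} at all: it is quoted from Kusuoka and Tudor \cite{KT} (their Proposition 3), so your argument is a genuinely independent, self-contained derivation rather than a rediscovery of the paper's proof. Your key steps all check out: the identity
\[
xf(x)+h(x)-m_{h}=\frac{1}{\varphi(x)}\int_{x}^{u}\Bigl[y\bigl(h(x)-h(y)\bigr)+(y-x)\bigl(h(y)-m_{h}\bigr)\Bigr]\rho_{\ast}(y)\,dy
\]
is an exact algebraic rearrangement (both sides expand to $\frac{1}{\varphi(x)}\int_x^u\{y[h(x)-m_h]-x[h(y)-m_h]\}\rho_*\,dy$), the factor $(y-x)\leq(u-x)$ it produces is exactly what cancels the at-most-linear vanishing of $g_{\ast}$ guaranteed by conditions \textbf{1}--\textbf{2}, the crude bound $|xf(x)|\leq 2\|h\|_{\infty}$ together with conditions \textbf{3}--\textbf{4} handles infinite endpoints, and compactness plus continuity of $\rho_{\ast}$ (Assumption A) handles the middle region, with every constant depending only on $Z$. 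It is worth contrasting your route with the machinery the paper builds to \emph{improve} this lemma: the paper's Theorem \ref{prop_fprimebound} starts from the double-integral representation of $f'$ in Proposition \ref{prop_fprime} and runs L'H\^{o}pital-type endpoint arguments against an auxiliary comparison function $\tilde{g}$. Your approach is more elementary (no representation of $f'$, no $\tilde{g}$, no L'H\^{o}pital) and suffices for the bound $k(\|h\|_{\infty}+\|h'\|_{\infty})$ stated here; the paper's heavier machinery buys two things your argument cannot: a bound by $k\|h'\|_{\infty}$ alone, which is what makes the estimate directly usable for $d_{W}$ where $\|h\|_{\infty}$ is unbounded, and applicability when $g_{\ast}$ vanishes faster than linearly at a finite endpoint (e.g.\ $g_{\ast}(x)\sim\alpha(x-l)^{p}$ with $1<p\leq2$, as for the Inverse Gamma or Lognormal laws), a case excluded by conditions \textbf{1}--\textbf{2} and hence by your cancellation step, which produces only one factor of $(u-x)$ or $(x-l)$.
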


Unfortunately, conditions \textbf{1} and \textbf{2} are too restrictive.
Consider for instance a r.v. $Z$ with support $\left(  l,\infty\right)  $ and $g_{\ast}\left(
x\right)  =\alpha\left(  x\right)  \left(  x-l\right)  ^{p}$, where
$\alpha(x)$ is uniformly bounded below by some $\alpha_{0}>0$. From Remark
\ref{rem_geepower}, $1\leq p\leq2$ necessarily. Among all $g_{\ast}$ of this
form, Lemma \ref{prop_tudorbound} is thus only able to assure the needed
boundedness of $f^{\prime}$ when $p=1$. For instance, when $Z$ is Inverse Gamma or Lognormal, condition \textbf{2} fails (see the corresponding $g_{\ast}$ in Table \ref{table_geestar}). This stresses the need for less
restrictive conditions on $g_{\ast}$ that would allow us to include these
cases and much more. The first requirement in order to achieve this is a good representation of the derivative $f'$. 

\begin{proposition}
\label{prop_fprime}For $x\in\left(  l,u\right)  $, the derivative $f^{\prime}$
of the solution of the Stein equation (\ref{eqn_steineqn}) is%
\[
f^{\prime}\left(  x\right)  =\frac{1}{g_{\ast}^{2}\left(  x\right)  \rho
_{\ast}\left(  x\right)  }\int_{x}^{u}\int_{l}^{x}\left[  1-\Phi\left(
s\right)  \right]  \Phi\left(  t\right)  \left[  h^{\prime}\left(  t\right)
-h^{\prime}\left(  s\right)  \right]  dtds\text{.}%
\]
where $\Phi(x)=\int_l^x\rho_*(t)dt$ is the cumulative distribution function of $Z$.
\end{proposition}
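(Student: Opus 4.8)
The plan is to start from the closed form of the solution recorded just before the statement and differentiate it directly. Writing $\varphi(x):=g_{\ast}(x)\rho_{\ast}(x)=\int_{x}^{u}y\rho_{\ast}(y)\,dy$ and $\Psi(x):=\int_{l}^{x}\left[h(y)-m_{h}\right]\rho_{\ast}(y)\,dy$, the stated solution reads $f(x)=\Psi(x)/\varphi(x)$. The only structural fact I need here is $\varphi'(x)=-x\rho_{\ast}(x)$, which is exactly the identity $\bigl(g_{\ast}(z)\rho_{\ast}(z)\bigr)'=-z\rho_{\ast}(z)$ noted after (\ref{eqn_gstargee}). Since $\Psi'(x)=\left[h(x)-m_{h}\right]\rho_{\ast}(x)$, the quotient rule gives
\[
f'(x)=\frac{\Psi'(x)\varphi(x)-\Psi(x)\varphi'(x)}{\varphi(x)^{2}}=\frac{\left[h(x)-m_{h}\right]\varphi(x)+x\Psi(x)}{g_{\ast}^{2}(x)\rho_{\ast}(x)},
\]
where I used $\varphi(x)^{2}=g_{\ast}^{2}(x)\rho_{\ast}^{2}(x)$ and cancelled one factor of $\rho_{\ast}(x)$. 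Thus the proposition reduces to the purely analytic identity that the numerator $N(x):=\left[h(x)-m_{h}\right]\varphi(x)+x\Psi(x)$ coincides with the double integral in the statement.

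To prove this identity I would exploit that the integrand $\left[1-\Phi(s)\right]\Phi(t)\left[h'(t)-h'(s)\right]$ is a difference of products of a function of $s$ with a function of $t$, so the double integral factorizes as $AB-CD$, where $A=\int_{x}^{u}\left[1-\Phi(s)\right]ds$, $B=\int_{l}^{x}\Phi(t)h'(t)\,dt$, $C=\int_{x}^{u}\left[1-\Phi(s)\right]h'(s)\,ds$, and $D=\int_{l}^{x}\Phi(t)\,dt$. Each of these four one-dimensional integrals is then evaluated by integration by parts: $A$ and $D$ (integrating against $1$) produce $\varphi(x)$ together with boundary contributions $x\left[1-\Phi(x)\right]$ and $x\Phi(x)$, using $\int_{l}^{x}t\rho_{\ast}(t)\,dt=-\varphi(x)$; while $B$ and $C$ transfer the derivative off $h$, returning $h(x)$ weighted by $\Phi(x)$ or $1-\Phi(x)$ together with the partial moments $P:=\int_{l}^{x}h\rho_{\ast}$ and $Q:=\int_{x}^{u}h\rho_{\ast}$, with $P+Q=m_{h}$. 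Substituting these into $AB-CD$ and expanding, the mixed term $\pm\,x\Phi(x)\left[1-\Phi(x)\right]h(x)$ cancels, the $h(x)$-terms collapse through $\Phi(x)+\left[1-\Phi(x)\right]=1$ to $\varphi(x)h(x)$, and the surviving $P$- and $Q$-terms regroup into $\left[h(x)-m_{h}\right]\varphi(x)+x\Psi(x)=N(x)$ once one writes $\Psi(x)=P-m_{h}\Phi(x)$.

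I expect the main obstacle to be the control of the boundary terms in these four integrations by parts when the support is unbounded. The evaluations of $A$ and $D$ require $s\left[1-\Phi(s)\right]\to0$ as $s\to u$ and $t\,\Phi(t)\to0$ as $t\to l$, while those of $B$ and $C$ require $\left[1-\Phi(s)\right]h(s)\to0$ and $\Phi(t)h(t)\to0$ at the respective endpoints. The first pair follows from $\mathbb{E}\left|Z\right|<\infty$ (a consequence of $Z\in\mathbb{D}^{1,2}\subset L^{2}$ under Assumption A), which forces the tails of $\Phi$ to decay fast enough; the second pair follows from the same tail control combined with the at-most-linear growth of $h$ guaranteed by $h\in\mathcal{H}_{W}$ or $h\in\mathcal{H}_{FM}$. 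Once these vanishing boundary conditions are secured, the rest is the bookkeeping of the cancellation above, which is routine; the symmetric roles of the two representations of $\varphi$, as $\int_{x}^{u}y\rho_{\ast}(y)\,dy$ and $-\int_{l}^{x}y\rho_{\ast}(y)\,dy$, make the computation insensitive to the endpoint one anchors at.
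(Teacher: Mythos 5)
Your proof is correct, but it takes a genuinely different route from the paper's. You differentiate the closed form $f=\Psi/\varphi$ by the quotient rule (which, via $\varphi'(x)=-x\rho_{\ast}(x)$, amounts to invoking the Stein equation $g_{\ast}f'=xf+h-m_h$ in disguise) and then \emph{verify} that the numerator $N(x)=[h(x)-m_h]\varphi(x)+x\Psi(x)$ equals the double integral, by factorizing it as $AB-CD$ and integrating by parts in all four one-dimensional factors, thereby moving the derivative off $h'$ and back onto $h$. The paper goes in the opposite direction: it first rewrites $h(x)-m_h=\int_l^x\Phi(t) h'(t)dt-\int_x^u[1-\Phi(t)]h'(t)dt$ by Fubini, derives from this the representation (\ref{eqn_f}) of $f$ in terms of $h'$-weighted integrals, and then substitutes into $g_{\ast}f'=xf+h-m_h$, using only the two identities (\ref{eqn_phi1})--(\ref{eqn_phi2}) (your evaluations of $A$ and $D$) to factor the result into the double integral; the $h'$-integrals $B$ and $C$ are never integrated by parts. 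The practical consequences of the difference: your route requires two extra boundary facts, $\Phi(t)h(t)\to0$ as $t\to l$ and $[1-\Phi(s)]h(s)\to0$ as $s\to u$, which you correctly secure from the at-most-linear growth of Lipschitz $h$ combined with the tail decay $t\Phi(t)\to0$ and $s[1-\Phi(s)]\to0$ furnished by $\mathbb{E}|Z|<\infty$; the paper needs only that latter pair of limits (which it obtains from $\mathbb{E}[Z]=0$, essentially the same estimate). On the other hand, the paper's forward derivation also produces the intermediate formula (\ref{eqn_f}) for $f$ itself, which is reused later in the analysis of $f''$, whereas a pure verification of the $f'$ formula does not yield that by-product. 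Your algebra does check out: in $AB-CD$ the mixed term $x\Phi(x)[1-\Phi(x)]h(x)$ cancels, the $h(x)$-terms collapse through $\Phi+[1-\Phi]=1$, and the remaining terms regroup to $N(x)$ using $P+Q=m_h$ and $\Psi=P-m_h\Phi$.
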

\begin{proof}
First,%
\begin{align*}
h\left(  x\right)  -m_{h}  &  =\int_{l}^{x}\left[  h\left(  x\right)
-h\left(  s\right)  \right]  \rho_{\ast}\left(  s\right)  ds+\int_{x}%
^{u}\left[  h\left(  x\right)  -h\left(  s\right)  \right]  \rho_{\ast}\left(
s\right)  ds\\
&  =\int_{l}^{x}\left[  \int_{s}^{x}h^{\prime}\left(  t\right)  dt\right]
\rho_{\ast}\left(  s\right)  ds-\int_{x}^{u}\left[  \int_{x}^{s}h^{\prime
}\left(  t\right)  dt\right]  \rho_{\ast}\left(  s\right)  ds\\
&  =\int_{l}^{x}\left[  \int_{l}^{t}\rho_{\ast}\left(  s\right)  ds\right]
h^{\prime}\left(  t\right)  dt-\int_{x}^{u}\left[  \int_{t}^{u}\rho_{\ast
}\left(  s\right)  ds\right]  h^{\prime}\left(  t\right)  dt\\
&  =\int_{l}^{x}\Phi\left(  t\right)  h^{\prime}\left(  t\right)  dt-\int
_{x}^{u}\left[  1-\Phi\left(  t\right)  \right]  h^{\prime}\left(  t\right)
dt
\end{align*}
and so%
\begin{align*}
g_{\ast}\left(  x\right)  \rho_{\ast}\left(  x\right)  f\left(  x\right)   &
=\int_{l}^{x}\left[  h\left(  y\right)  -m_{h}\right]  \rho_{\ast}\left(
y\right)  dy\\
&  =\int_{l}^{x}\left[  \int_{l}^{y}\Phi\left(  t\right)  h^{\prime}\left(
t\right)  dt\right]  \rho_{\ast}\left(  y\right)  dy-\int_{l}^{x}\left[
\int_{y}^{u}\left[  1-\Phi\left(  t\right)  \right]  h^{\prime}\left(
t\right)  dt\right]  \rho_{\ast}\left(  y\right)  dy\\
&  =\int_{l}^{x}\left[  \int_{t}^{x}\rho_{\ast}\left(  y\right)  dy\right]
\Phi\left(  t\right)  h^{\prime}\left(  t\right)  dt\\
&  \quad\quad-\int_{l}^{x}\left[  \int_{l}^{t}\rho_{\ast}\left(  y\right)
dy\right]  \left[  1-\Phi\left(  t\right)  \right]  h^{\prime}\left(
t\right)  dt-\int_{x}^{u}\left[  \int_{l}^{x}\rho_{\ast}\left(  y\right)
dy\right]  \left[  1-\Phi\left(  t\right)  \right]  h^{\prime}\left(
t\right)  dt\\
&  =\int_{l}^{x}\left[  \Phi\left(  x\right)  -\Phi\left(  t\right)  \right]
\Phi\left(  t\right)  h^{\prime}\left(  t\right)  dt-\int_{l}^{x}\Phi\left(
t\right)  \left[  1-\Phi\left(  t\right)  \right]  h^{\prime}\left(  t\right)
dt-\int_{x}^{u}\Phi\left(  x\right)  \left[  1-\Phi\left(  t\right)  \right]
h^{\prime}\left(  t\right)  dt.
\end{align*}
Cancelling some terms and solving for $f$,
\begin{align}\label{eqn_f}
f\left(  x\right)    =-\frac{1-\Phi\left(  x\right)  }{g_{\ast}\left(
x\right)  \rho_{\ast}\left(  x\right)  }\int_{l}^{x}\Phi\left(  t\right)
h^{\prime}\left(  t\right)  dt-\frac{\Phi\left(  x\right)  }{g_{\ast}\left(
x\right)  \rho_{\ast}\left(  x\right)  }\int_{x}^{u}\left[  1-\Phi\left(
t\right)  \right]  h^{\prime}\left(  t\right)  dt\text{.}
\end{align}
Observe that if $x<0$,
$$0=\mathbb{E}[Z]=\int_l^xt\rho_*(t)dt+\int_x^ut\rho_*(t)dt\leq x\Phi(x)+g_*(x)\rho_*(x)$$
while if $x>0$, 
$$0=\mathbb{E}[Z]=\int_l^xt\rho_*(t)dt+\int_x^ut\rho_*(t)dt\geq -g_*(x)\rho_*(x)+x[1-\Phi(x)].$$
Therefore, $0\leq-x\Phi\left(  x\right)  \leq g_{\ast}(x)\rho_{\ast
}(x)\rightarrow0$ as $x\rightarrow l$ and $0\leq x\left[
1-\Phi\left(  x\right)  \right]  \leq g_{\ast}(x)\rho_{\ast}(x)\rightarrow
0$ as $x\rightarrow u$. When we then integrate by parts,%
\begin{align}
\int_{l}^{x}\Phi\left(  t\right)  dt  &  =t\Phi\left(  t\right)  \bigg|%
_{l}^{x}-\int_{l}^{x}t\rho_{\ast}\left(  t\right)  dt=x\Phi\left(  x\right)
+g_{\ast}\left(  x\right)  \rho_{\ast}\left(  x\right) \label{eqn_phi1}\\
\int_{x}^{u}\left[  1-\Phi\left(  t\right)  \right]  dt  &  =t\left[
1-\Phi\left(  t\right)  \right]  \bigg|_{x}^{u}+\int_{x}^{u}t\rho_{\ast
}\left(  t\right)  dt=-x\left[  1-\Phi\left(  x\right)  \right]  +g_{\ast
}\left(  x\right)  \rho_{\ast}\left(  x\right)  \text{.} \label{eqn_phi2}%
\end{align}
Finally,%
\begin{align*}
g_{\ast}\left(  x\right)  f^{\prime}\left(  x\right)   &  =xf\left(  x\right)
+h\left(  x\right)  -m_{h}\\
&  =\left(  -\frac{x\left[  1-\Phi\left(  x\right)  \right]  }{g_{\ast}\left(
x\right)  \rho_{\ast}\left(  x\right)  }+1\right)  \int_{l}^{x}\Phi\left(
t\right)  h^{\prime}\left(  t\right)  dt-\left(  \frac{x\Phi\left(  x\right)
}{g_{\ast}\left(  x\right)  \rho_{\ast}\left(  x\right)  }+1\right)  \int
_{x}^{u}\left[  1-\Phi\left(  t\right)  \right]  h^{\prime}\left(  t\right)
dt\\
&  =\frac{1}{g_{\ast}\left(  x\right)  \rho_{\ast}\left(  x\right)  }\int
_{x}^{u}\left[  1-\Phi\left(  s\right)  \right]  ds\int_{l}^{x}\Phi\left(
t\right)  h^{\prime}\left(  t\right)  dt-\frac{1}{g_{\ast}\left(  x\right)
\rho_{\ast}\left(  x\right)  }\int_{l}^{x}\Phi\left(  t\right)  dt\int_{x}%
^{u}\left[  1-\Phi\left(  s\right)  \right]  h^{\prime}\left(  s\right)  ds
\end{align*}
which leads to the given form of $f^{\prime}$.
\end{proof}

\bigskip

The bound (\ref{eqn_fprimeboundtudor}) is not directly suited for $d_{W}$ where we don't have a prescribed bound on $||h||_{\infty}$. A workaround, as pointed out in \cite{KT}, is that for each $h\in$ $\mathcal{H}_{W}$, we pass on the analysis to a sequence $\left\{  h_{n}\right\}  $ converging to $h$ uniformly in every compact set, where $\left\{  h_{n}\right\}  \subset\left\{  h\in C_{0}^{1}:\left\|
h^{\prime}\right\|  _{\infty}\leq1\right\}  $. However, with the help of the previous lemma, we can overcome this complication by giving a bound for $f'$ in terms of only $||h^{\prime}||_{\infty}$. Recall that if $h$ is Lipschitz, it is a.e. differentiable and $\left\|  h^{\prime}\right\|  _{\infty}%
\leq\left\|  h\right\|  _{L}$. Thus, the upper bound obtained here is
immediately well suited for all $f\in\mathscr{F}_{FM}$ and for all
$f\in\mathscr{F}_{W}$.


\begin{theorem}
\label{prop_fprimebound}If applicable, assume conditions \textbf{3} and
\textbf{4} from Lemma \ref{prop_tudorbound}. Suppose there exists a
positive function$\ \tilde{g}\in \mathcal{C}^{1}(l,u)$ such that
\begin{enumerate}

\item $0<\varliminf_{x\rightarrow u}g_{\ast}\left(  x\right)/\tilde{g}\left(  x\right)\leq\varlimsup_{x\rightarrow u}g_{\ast}\left(  x\right)  /\tilde{g}\left(  x\right)  <\infty\ $ and $\ \tilde{g}'(u^-):=\lim_{x\to u^-}\tilde{g}'(x)\in \boldsymbol{R}$ exists.\footnote{$\boldsymbol{R}$ stands for the extended real numbers, i.e. $\boldsymbol{R}=[-\infty,\infty].$}

\item $0<\varliminf_{x\rightarrow l}g_{\ast}\left(  x\right)/\tilde{g}\left(  x\right)\leq\varlimsup_{x\rightarrow l}g_{\ast}\left(  x\right)  /\tilde{g}\left(  x\right)  <\infty\ $ and $\ \tilde{g}'(l^+):=\lim_{x\to l^+}\tilde{g}'(x)\in \boldsymbol{R}$ exists.

\end{enumerate}
Then the solution $f$ of the Stein equation (\ref{eqn_steineqn}), for a given
test function $h$ with $\left\|h^{\prime}\right\|  _{\infty}<\infty$, has derivative bounded as follows:%
\begin{equation}
\left\|  f^{\prime}\right\|  _{\infty}\leq k\left\|  h^{\prime}\right\|  _{\infty}
\label{eqn_fprimebound}
\end{equation}
where the constant $k$ depends on $Z$ alone, and not on $h$.
\end{theorem}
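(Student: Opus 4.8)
The plan is to start from the closed form of $f'$ in Proposition~\ref{prop_fprime} and reduce the theorem to a purely analytic statement about $Z$ alone. Since $|h'(t)-h'(s)|\le 2\|h'\|_\infty$, the double integral there is dominated in absolute value by $2\|h'\|_\infty\bigl(\int_x^u[1-\Phi(s)]\,ds\bigr)\bigl(\int_l^x\Phi(t)\,dt\bigr)$, the two factors separating because the $s$- and $t$-integrations do. Writing $\varphi:=g_{\ast}\rho_{\ast}$ and using the integration-by-parts identities (\ref{eqn_phi1}) and (\ref{eqn_phi2}), namely $A(x):=\int_l^x\Phi=x\Phi(x)+\varphi(x)$ and $B(x):=\int_x^u[1-\Phi]=-x[1-\Phi(x)]+\varphi(x)$, I would reduce the whole theorem to showing that
\[
Q(x):=\frac{A(x)B(x)}{g_{\ast}^2(x)\rho_{\ast}(x)}
\]
is bounded on $(l,u)$; then (\ref{eqn_fprimebound}) holds with $k=2\sup_{(l,u)}Q$, a constant depending on $Z$ only.

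Because $g_{\ast}$ and $\rho_{\ast}$ are continuous and strictly positive inside the support, $Q$ is continuous and nonnegative on the open interval, so it suffices to bound $\limsup_{x\to u^-}Q$ and $\limsup_{x\to l^+}Q$; the two analyses mirror each other, so I focus on $u$. The key simplification is the exact identity $\tfrac{B}{g_{\ast}^2\rho_{\ast}}=-m'$, where $m:=(1-\Phi)/\varphi$, which follows at once from $\varphi'=-x\rho_{\ast}$ (recorded in the excerpt). Moreover $A(x)=x\Phi(x)+\varphi(x)\to u$ as $x\to u^-$. Hence for finite $u$ the question becomes the evaluation of $\lim_{x\to u^-}(-m'(x))$, where $m$ satisfies the clean first-order relation $g_{\ast}m'=xm-1$.

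The crux is this endpoint limit. If $g_{\ast}$ stays bounded below near $u$ (automatic when $\tilde g(u^-)>0$), then $m'\to0$ and $Q\to0$. The delicate case is $g_{\ast}(u^-)=0$, where $B$, $\varphi$, and $g_{\ast}$ all vanish and $Q$ is indeterminate. Here I would run L'H\^opital's rule on $B/(g_{\ast}\varphi)$, but since $g_{\ast}$ itself need not be differentiable at $u$, the role of hypothesis~\textbf{1} is to substitute the $C^1$ surrogate $\tilde g$: the two-sided bound $0<\varliminf g_{\ast}/\tilde g\le\varlimsup g_{\ast}/\tilde g<\infty$ lets me pass between $B/(g_{\ast}\varphi)$ and $B/(\tilde g\varphi)$ up to bounded factors (only boundedness, not convergence, of $g_{\ast}/\tilde g$ is used), while the existence of $\tilde g'(u^-)$ supplies the derivative information the differentiation step needs. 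Feeding this into $g_{\ast}m'=xm-1$ produces a finite bound on $Q$ near $u$ (equal to $1/(u-\tilde g'(u^-))$ in the regular case $g_{\ast}/\tilde g\to1$), which is finite and positive since $\tilde g\downarrow0$ at $u$ forces $\tilde g'(u^-)\le0<u$. When $u=\infty$, $A(x)\sim x\to\infty$ and this factorisation is too lossy; instead I would use condition~\textbf{3} ($\varliminf_{x\to\infty}g_{\ast}>0$) together with the at-most-quadratic growth of $g_{\ast}$ forced by Proposition~\ref{prop_gstarintegral} to show directly that $Q(x)\to0$. The endpoint $l$ is handled identically via hypotheses~\textbf{2} and \textbf{4}.

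I expect the main obstacle to be precisely the finite-endpoint indeterminate case $g_{\ast}(u^-)=0$: the limit of $Q$ genuinely requires first-order information about $g_{\ast}$ at a point where $g_{\ast}$ is only assumed comparable to, not equal to, a smooth function, and where the ratio $g_{\ast}/\tilde g$ may oscillate. Reconciling this is the whole reason for introducing $\tilde g$ and for phrasing hypotheses~\textbf{1}--\textbf{2} as two-sided comparisons with a $C^1$ function whose one-sided derivative exists, rather than as conditions on $g_{\ast}$ directly; the care needed to apply L'H\^opital rigorously under only $\limsup/\liminf$ control—so that one concludes boundedness of $Q$ rather than an exact limit—is where the argument must be most delicate.
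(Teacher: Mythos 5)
Your interior compactness reduction and your finite-endpoint analysis are correct and are essentially the paper's own argument in different clothing: the identity $B/(g_{\ast}^{2}\rho_{\ast})=-m'$ with $m=(1-\Phi)/\varphi$, $\varphi:=g_{\ast}\rho_{\ast}$, together with $g_{\ast}m'=xm-1$, is a repackaging of the paper's limsup-version of L'H\^opital applied to $\int_{l}^{x}\Phi\,dt\,/\,(\tilde g\,\varphi)$ after trading one factor of $g_{\ast}$ for the $\mathcal{C}^{1}$ surrogate $\tilde g$, and your constant $C/(cu-\tilde g'(u^{-}))$ is the mirror image of the paper's $2C\Vert h'\Vert_{\infty}/(\tilde g'(l^{+})-cl)$. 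The genuine gap is at the infinite endpoints, where you propose to discard hypotheses \textbf{1}--\textbf{2} and argue from condition \textbf{3} plus ``the at-most-quadratic growth of $g_{\ast}$ forced by Proposition \ref{prop_gstarintegral}.'' That implication is false: $\int_{0}^{\infty}y/g_{\ast}(y)\,dy=\infty$ only rules out a \emph{sustained} super-quadratic lower bound (it gives $\varliminf_{x\rightarrow\infty}g_{\ast}(x)/x^{2+\epsilon}=0$ for every $\epsilon>0$), while $\varlimsup_{x\rightarrow\infty}g_{\ast}(x)/x^{2}$ may be $+\infty$; the statement in Remark \ref{rem_geepower} is special to exact powers.

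More importantly, even if you grant $m_{0}\leq g_{\ast}(x)\leq Cx^{2}$ near $+\infty$, no direct estimate yields $Q(x)\rightarrow0$, nor even $\varlimsup Q<\infty$. The two available bounds are: (i) from $B(x)\leq\varphi(x)$ (equation (\ref{eqn_phi2})), $\varlimsup Q\leq\varlimsup x/g_{\ast}(x)$, useful only when $g_{\ast}$ grows super-linearly; and (ii) from $1-\Phi(t)\leq\varphi(t)/t$ and a limsup L'H\^opital applied to $\int_{x}^{\infty}\varphi\,/\,\varphi$, $\varlimsup Q\leq m_{0}^{-1}\varlimsup g_{\ast}(x)/x$, useful only when $g_{\ast}(x)=O(x)$. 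A $g_{\ast}$ oscillating between constant size and size $x^{2}$ (compatible with condition \textbf{3} and with Proposition \ref{prop_gstarintegral}) defeats both simultaneously. The paper closes exactly this hole with a dichotomy on $\lim_{x\rightarrow\infty}\tilde g(x)/x$: the existence of this limit in $[0,\infty]$ is deduced from $\tilde g\in\mathcal{C}^{1}$ and the existence of $\tilde g'$ at the endpoint, and then route (i) applies when the limit is $\infty$ and route (ii) when it is finite. In other words, hypotheses \textbf{1}--\textbf{2} are doing essential work at the infinite endpoints in tandem with conditions \textbf{3}--\textbf{4}, and a proof that drops them there does not close. Note also that your target $Q\rightarrow0$ is stronger than what is needed or what the paper obtains: in the subcase $\lim\tilde g(x)/x<\infty$ one only gets $\varlimsup Q\leq\varlimsup\tilde g(x)/(cx)<\infty$, and boundedness is all the theorem requires.
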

\begin{proof}
First note that from Proposition \ref{prop_fprime},
\begin{align}\label{eqfprime}
\left|  f^{\prime}\left(  x\right)  \right|  \leq\frac{2\left\|  h^{\prime
}\right\|  _{\infty}}{g_{\ast}^{2}\left(  x\right)  \rho_{\ast}\left(
x\right)  }\int_{x}^{u}\left[  1-\Phi\left(  s\right)  \right]  ds\int_{l}%
^{x}\Phi\left(  t\right)  dt\text{.}%
\end{align}
Fix $l'$ and $u'$ s.t. $l<l'<0<u'<u$. Since $g_*(x)\rho_*(x)$ is continuous and strictly positive on $[l',u']$, it attains its minimum $m:=\inf_{\left[  l^{\prime},u^{\prime}\right]  }g_*(x)\rho_*(x) >0$ on this compact set. Also by continuity of the density $M:=\sup_{\left[  l^{\prime},u^{\prime}\right]} \rho_{\ast}(x)<\infty$, and $g_{\ast}\left(x\right)  =\frac{g_{\ast}(x)\rho_{\ast}(x)}{\rho_{\ast}\left(  x\right)  }\geq\frac{m}{M}>0$ on $\left[  l^{\prime},u^{\prime}\right]  $, so $g_{\ast}^{2}(x)\rho_{\ast}(x)\geq\frac{m^{2}}{M}$. By the continuity and positivity of $I_1(x):=\int_{x}^{u}\left[  1-\Phi\left(  s\right)  \right]  ds$ and $I_2(x):=\int_{l}^{x}\Phi\left(  t\right)  dt$ we conclude that $K:=\sup_{\left[  l^{\prime},u^{\prime}\right]  }\left(
I_{1}(x)\vee I_{2}(x)\right)  <\infty$. By (\ref{eqfprime}), $|f^{\prime}(x)|\leq\frac{2MK^2}{m^2}\left\| h'\right\|  _{\infty}$ on $\left[  l^{\prime},u^{\prime}\right]  $.

Since $l^{\prime}$ and $u^{\prime}$ were arbitrarily chosen, we only need to
prove now that $\varlimsup_{x\rightarrow l}\left|  f^{\prime}\left(  x\right)
\right|  \leq k_{1}||h^{\prime}||_{\infty}$ and $\varlimsup_{x\rightarrow
u}\left|  f^{\prime}\left(  x\right)  \right|  \leq k_{2}||h^{\prime
}||_{\infty}$ for some finite constants $k_{1}$ and $k_{2}$. Due to the
symmetry of the arguments it suffices to prove just one of these limits.
Suppose $l^{\prime}$ was chosen small enough so that $\tilde{g}\in
\mathcal{C}^{1}\left(  l,l^{\prime}\right)  $, and for some constants $0<c\leq C<\infty
$, $cg_{\ast}\left(  x\right)  \leq\tilde{g}\left(  x\right)  \leq Cg_{\ast
}\left(  x\right)  $ on $\left(  l,l^{\prime}\right)  $.

\begin{itemize}

\item {\bf Case 1:} $\boldsymbol{l>-\infty}$.\\
We show that the limit of the right-hand side of (\ref{eqfprime}) is finite
as $x\rightarrow l$. Note that in this case, $\int_{x}^{u}\left[  1-\Phi\left(
s\right)  \right]  ds=g_{\ast}\left(  x\right)  \rho_{\ast}\left(  x\right)
-x\left[  1-\Phi\left(  x\right)  \right]  \rightarrow\left|  l\right|  $.
By L'H\^{o}pital's rule,
\begin{align*}
\varlimsup_{x\rightarrow l}\left|  f^{\prime}\left(  x\right)  \right|   &
\leq2\left\|  h^{\prime}\right\|  _{\infty}\left|  l\right|  \varlimsup
_{x\rightarrow l}\frac{C\int_{l}^{x}\Phi\left(  t\right)  dt}{\tilde{g}\left(
x\right)  g_{\ast}\left(  x\right)  \rho_{\ast}\left(  x\right)  }%
\leq2\left\|  h^{\prime}\right\|  _{\infty}\left|  l\right|  C\varlimsup
_{x\rightarrow l}\frac{\Phi\left(  x\right)  }{-x\tilde{g}\left(  x\right)
\rho_{\ast}\left(  x\right)  +\tilde{g}^{\prime}\left(  x\right)  g_{\ast
}\left(  x\right)  \rho_{\ast}\left(  x\right)  }\\
&  \leq2\left\|  h^{\prime}\right\|  _{\infty}\left|  l\right|  C\varlimsup
_{x\rightarrow l}\frac{\Phi\left(  x\right)  }{\left[  -cx+\tilde{g}^{\prime
}\left(  x\right)  \right]  g_{\ast}\left(  x\right)  \rho_{\ast}\left(
x\right)  }\leq\frac{2\left\|  h^{\prime}\right\|  _{\infty}\left|  l\right|
C}{\tilde{g}^{\prime}\left(  l^+\right)  -cl}\varlimsup_{x\rightarrow l}%
\frac{\rho_{\ast}\left(  x\right)  }{-x\rho_{\ast}\left(  x\right)  }%
=\frac{2\left\|  h^{\prime}\right\|  _{\infty}C}{\tilde{g}^{\prime}\left(
l^+\right)  -cl}\text{.}%
\end{align*}
Since $\tilde{g}\left(  l^+\right):= \lim_{z\to l^+}\tilde{g}_*(z) =0$ and $\tilde{g}\geq0$, we may assume
$l^{\prime}$ is small enough so $\tilde{g}^{\prime}\geq0$ on $\left(
l,l^{\prime}\right)  $. Consequently, $\tilde{g}^{\prime}\left(  l^+\right)
\neq cl<0$.

\item {\bf Case 2:} $\boldsymbol{l=-\infty}$.\\
Since $\varliminf
_{x\rightarrow-\infty}g_{\ast}\left(  x\right)  >0$, we may suppose
$l^{\prime}$ is small enough so that for some constant $m_{0}>0$, $g_{\ast
}\left(  x\right)  \geq m_{0}$ over $\left(  -\infty,l^{\prime}\right)  $.
\begin{align*}
\varlimsup_{x\rightarrow -\infty}\left|  f^{\prime}\left(  x\right)  \right|   &
\leq 2||h'||_\infty\varlimsup
_{x\rightarrow -\infty}\frac{\bigl(g_{\ast}\left(  x\right)  \rho_{\ast}\left(  x\right)
-x\left[  1-\Phi\left(  x\right)  \right]\bigr)\int_{-\infty}^{x}\Phi\left(  t\right)  dt}{g^2_{\ast}\left(  x\right)  \rho_{\ast}\left(  x\right)  }\\
&\leq2\left\|  h^{\prime}\right\|  _{\infty}\biggl(\varlimsup
_{x\rightarrow -\infty}\frac{\int_{-\infty}^{x}\Phi\left(  t\right)  dt}{m_0}+\varlimsup_{x\rightarrow -\infty}\frac{-x\int_{-\infty}^{x}\Phi\left(  t\right)  dt}{g^2_{\ast}\left(  x\right)  \rho_{\ast}\left(  x\right)  }
\biggr) \\
&=2\left\|  h^{\prime}\right\|  _{\infty}\varlimsup_{x\rightarrow -\infty}\frac{|x|\int_{-\infty}^{x}\Phi(t)dt}{g^2_*(x)\rho_*(x)}
\end{align*}
There are two subcases to consider depending on the behavior of $\tilde{g}(x)$ as $x \to -\infty$. From the continuity of $\tilde{g}$ and the existence of $\tilde{g}'(l^+)$,  $L:=\lim_{x\to-\infty}\tilde{g}(x)$ necessarily exists. If $L<\infty$, then $\lim_{x\to-\infty}\frac{\tilde{g}(x)}{|x|}=0$. If $L=\infty$, then by L'H\^{o}pital's rule, $\lim_{x\to-\infty}\frac{\tilde{g}(x)}{|x|}=-\lim_{x\to-\infty}\tilde{g}'(x)=-\tilde{g}'(l^+)$. In either case, $\lim_{x\to-\infty}\frac{\tilde{g}(x)}{|x|}$ exists.
\begin{itemize}
\item {\bf Subcase 1:} $\boldsymbol{\lim_{x\to-\infty}\frac{\tilde{g}(x)}{|x|}=\infty}$\\
Note that by (\ref{eqn_phi1}), $\int_{-\infty}^{x}\Phi(t)dt = x\Phi(x)+g_*(x)\rho_*(x) \le g_*(x)\rho_*(x)$ so
$$\frac{|x|\int_{-\infty}^{x}\Phi(t)dt}{g^2_*(x)\rho_*(x)}\leq C\frac{|x|g_*(x)\rho_*(x)}{\tilde{g}(x)g_*(x)\rho_*(x)} = C\frac{|x|}{\tilde{g}(x)}.$$
Therefore
$$\varlimsup_{x\rightarrow -\infty}\left|  f^{\prime}\left(  x\right)  \right|\leq2\left\|  h^{\prime}\right\|  _{\infty}C\varlimsup_{x\rightarrow -\infty}\frac{|x|}{\tilde{g}(x)}=0<\infty.$$

\item {\bf Subcase 2:} $\boldsymbol{\lim_{x\to-\infty}\frac{\tilde{g}(x)}{|x|}<\infty}$\\
Similarly from (\ref{eqn_phi1}),
$$\frac{|x|\int_{-\infty}^{x}\Phi(t)dt}{g^2_*(x)\rho_*(x)}\leq\frac{\int_{-\infty}^{x}\frac{|x|}{|t|}g_*(t)\rho_*(t)dt}{m_0g_*(x)\rho_*(x)}\leq\frac{\int_{-\infty}^{x}g_*(t)\rho_*(t)dt}{m_0g_*(x)\rho_*(x)}.$$
Therefore,
\begin{align*}
\varlimsup_{x\rightarrow -\infty}\left|  f^{\prime}\left(  x\right)  \right| &\leq\frac{2||h'||_\infty}{m_0}\varlimsup_{x\rightarrow -\infty}\frac{\int_{-\infty}^{x}g_*(t)\rho_*(t)dt}{g_*(x)\rho_*(x)}\leq\frac{2||h'||_\infty}{m_0}\varlimsup_{x\rightarrow -\infty}\frac{g_*(x)\rho_*(x)}{-x\rho_*(x)} \\
&\leq \frac{2||h'||_\infty}{m_0}\varlimsup_{x\rightarrow -\infty}\frac{\tilde{g}(x)}{c|x|}<\infty.
\end{align*}
\end{itemize}
\end{itemize}
The proof that $\varlimsup_{x\rightarrow u}\left|  f^{\prime}\left(  x\right)
\right|  \leq k_{2}||h^{\prime}||_{\infty}$ for some $k_{2}<\infty$ is similar.
\end{proof}

\bigskip

Note that if $g_*$ is uniformly bounded below in a neighborhood of $l>-\infty$ (or for $u<\infty$) then condition {\bf 2} ({\bf 1} in the case of $u$) from Theorem \ref{prop_fprimebound} is not required (see discussion before Lemma \ref{prop_tudorbound}). In the statement of the previous theorem, we can take $\tilde{g}=g_{\ast}$ if $g_{\ast}$ is continuously differentiable (at least locally $\mathcal{C}^1$ close to the endpoints of the support), and in this case the conditions are trivially met. In other words, if we can check that $g_*\in \mathcal{C}^1(l,u)$ then bound (\ref{eqn_fprimebound}) is automatically true (given the existence of $\tilde{g}'(u^-)$ and $\tilde{g}'(l^+)$). These new conditions
are met by all r.v.'s in the Exponential family, Pearson family, and practically any other
r.v. whose density is $\mathcal{C}^{1}$ and is strictly positive in its support. If
$g_{\ast}$ is not continuously differentiable, we can still get the bound but
we are required to approximate $g_{\ast}$ by a continuously differentiable
function $\tilde{g}$ near the endpoints of the support. For example, consider
the Laplace distribution where $g_{\ast}(x)=\frac{1}{c^{2}}(1+c|x|)$ (see
Table \ref{table_geestar}). In this case $g_{\ast}$ is differentiable everywhere except at 0. Therefore we can choose
$\tilde{g}(x)=g_{\ast}(x)$ for all $x\in\left(  -\infty,l^{\prime}\right)
\cup\left(  u^{\prime},\infty\right)  $ (with $-\infty<l^{\prime}<0<u^{\prime
}<\infty$) and $\tilde{g}(x)=\phi(x)$ on $(l^{\prime},u^{\prime})$ where $\phi$
is a smooth function such that $\tilde{g}$ is differentiable at $l^{\prime}$
and $u^{\prime}$.

\begin{description}
\item[Assumption B] We have the following conditions on $g_{\ast}$.

\begin{enumerate}

\item For some positive $\tilde{g}\in \mathcal{C}^{1}\left(l,u\right)$,
\begin{enumerate}
\item $0<\varliminf_{x\rightarrow u}g_{\ast}\left(  x\right)/\tilde{g}\left(  x\right)\leq\varlimsup_{x\rightarrow u}g_{\ast}\left(  x\right)  /\tilde{g}\left(  x\right)  <\infty$. 

\item $0<\varliminf_{x\rightarrow l}g_{\ast}\left(  x\right)/\tilde{g}\left(  x\right)\leq\varlimsup_{x\rightarrow l}g_{\ast}\left(  x\right)  /\tilde{g}\left(  x\right)  <\infty$.

\item $\tilde{g}'(l^+)\ $ and $\ \tilde{g}'(u^-)$ exist.

\end{enumerate}
\item If $u=\infty$, then $\varliminf_{x\rightarrow u}g_{\ast}\left(
x\right)  >0$.

\item If $l=-\infty$, then $\varliminf_{x\rightarrow l}g_{\ast}\left(
x\right)  >0$.

\end{enumerate}

\end{description}

\subsection{Bound for $f^{\prime\prime}$}\label{fprimeprime}

For our convergence in distribution results in Wiener-Poisson space, we need a
boundedness result for $f^{\prime\prime}$. The existence of $f^{\prime\prime}$
demands more conditions on $g_{\ast}$ such as differentiability, which is
understandable since we are requiring greater regularity in the solution of
the Stein equation. In this setting, the existence of $f^{\prime\prime}$ will also immediately force most conditions of Theorem \ref{prop_fprimebound} to be satisfied. If we want to work with $d_{W}$ or $d_{FM}$, we need to consider Lipschitz functions $h$, and for any such test function, we can only hope for it to be differentiable almost everywhere. Consequently, $f''$ must be understood in the almost everywhere sense, i.e., $f''$ is a version of the second derivative of $f$ such that wherever the second derivative does not exist, $f''$ will have a value of 0.

\bigskip

Before setting out to find a bound, we point out the unfortunate fact that our
results here will not apply to as wide a range of target r.v. $Z$ as what
happened for the first derivative. More specifically, we won't be able to give a finite bound for $\left|f^{\prime\prime}\left(  x\right)  \right|  $ when $l>-\infty$ or $u<\infty$,
as we were able to do for $\left|  f^{\prime}\left(  x\right)  \right|  $ in
Theorem \ref{prop_fprimebound}. We actually have a counterexample to
illustrate this: a r.v. $Z$ with support $\left(  l,\infty\right)
\varsubsetneq$ $\mathbb{R}$, such that for some Lipschitz and bounded $h$,
$f^{\prime\prime}\left(  x\right)  $ does not tend to a finite limit as
$x\rightarrow l$. A similar counterexample can be constructed for a r.v. $Z$
with support $\left(  -\infty,u\right)  \varsubsetneq$ $\mathbb{R}$, or with
support $\left(  l,u\right)  \varsubsetneq$ $\mathbb{R}$.

\bigskip

First, we make preliminary computations on $f^{\prime\prime}$. Differentiating
(\ref{eqn_steineqn}) gives us the second derivative%
\[
f^{\prime\prime}\left(  x\right)  =\frac{x-g_{\ast}^{\prime}\left(  x\right)
}{g_{\ast}\left(  x\right)  }f^{\prime}\left(  x\right)  +\frac{1}{g_{\ast
}\left(  x\right)  }f\left(  x\right)  +\frac{1}{g_{\ast}\left(  x\right)
}h^{\prime}\left(  x\right)
\]
which, after considering the form of $f$ in equation (\ref{eqn_f}) and of
$f^{\prime}$ given in Proposition \ref{prop_fprime}, reduces to%

\begin{equation}
f^{\prime\prime}\left(  x\right)  =\frac{A\left(  x\right)
{\displaystyle\int_{l}^{x}}
\Phi\left(  t\right)  h^{\prime}\left(  t\right)  dt+B\left(  x\right)
{\displaystyle\int_{x}^{u}}
\left[  1-\Phi\left(  s\right)  \right]  h^{\prime}\left(  s\right)
ds+g_{\ast}^{2}\left(  x\right)  \rho_{\ast}\left(  x\right)  h^{\prime
}\left(  x\right)  }{g_{\ast}^{3}\left(  x\right)  \rho_{\ast}\left(
x\right)  } \label{eqn_fprime2}%
\end{equation}
where, with the help of (\ref{eqn_phi1}) and (\ref{eqn_phi2}),%
\begin{align}
A\left(  x\right)   &  =\left(  x-g_{\ast}^{\prime}\left(  x\right)  \right)
\int_{x}^{u}\left[  1-\Phi\left(  s\right)  \right]  ds-g_{\ast}\left(
x\right)  \left(  1-\Phi\left(  x\right)  \right) \nonumber\\
&  =g_{\ast}\left(  x\right)  \rho_{\ast}\left(  x\right)  \left(  x-g_{\ast
}^{\prime}\left(  x\right)  \right)  -\left(  x^{2}-xg_{\ast}^{\prime}\left(
x\right)  +g_{\ast}\left(  x\right)  \right)  \left(  1-\Phi\left(  x\right)
\right) \label{eqn_ax}\\
B\left(  x\right)   &  =-\left(  x-g_{\ast}^{\prime}\left(  x\right)  \right)
\int_{l}^{x}\Phi\left(  t\right)  dt-g_{\ast}\left(  x\right)  \Phi\left(
x\right) \nonumber\\
&  =g_{\ast}\left(  x\right)  \rho_{\ast}\left(  x\right)  \left(  g_{\ast
}^{\prime}\left(  x\right)  -x\right)  -\left(  x^{2}-xg_{\ast}^{\prime
}\left(  x\right)  +g_{\ast}\left(  x\right)  \right)  \Phi\left(  x\right)
\text{.} \label{eqn_bx}%
\end{align}
Let $d\left(  x\right)  =g_{\ast}^{3}\left(  x\right)  \rho_{\ast}\left(
x\right)  $ and $n\left(  x\right)  =f^{\prime\prime}\left(  x\right)
d\left(  x\right)  $, the indicated denominator and numerator, respectively,
of $f^{\prime\prime}\left(  x\right)  $. As $x\rightarrow l$, both $d\left(
x\right)  $ and $n\left(  x\right)  $ tend to $0$. If $h^{\prime}$ happens to be differentiable, then by L'H\^{o}pital's rule, $\lim
_{x\rightarrow l}f^{\prime\prime}\left(  x\right)  =\lim_{x\rightarrow
l}n^{\prime}\left(  x\right)  /d^{\prime}\left(  x\right)  $. It can be shown
that $A^{\prime}\left(  x\right)  =\left(  2-g_{\ast}^{\prime\prime}\left(
x\right)  \right)  \int_{x}^{u}\left[  1-\Phi\left(  s\right)  \right]  ds$
and $B^{\prime}\left(  x\right)  =-\left(  2-g_{\ast}^{\prime\prime}\left(
x\right)  \right)  \int_{l}^{x}\Phi\left(  t\right)  dt$. Therefore
\begin{align*}
n^{\prime}\left(  x\right)   &  =A^{\prime}\left(  x\right)  \int_{l}^{x}%
\Phi\left(  t\right)  h^{\prime}\left(  t\right)  dt+A\left(  x\right)
\Phi\left(  x\right)  h^{\prime}\left(  x\right)  +B^{\prime}\left(  x\right)
\int_{x}^{u}\left[  1-\Phi\left(  s\right)  \right]  h^{\prime}\left(
s\right)  ds-B\left(  x\right)  \left[  1-\Phi\left(  x\right)  \right]
h^{\prime}\left(  x\right) \\
&  \quad\quad+\left[  -xg_{\ast}\left(  x\right)  \rho_{\ast}\left(  x\right)
+g_{\ast}^{\prime}\left(  x\right)  g_{\ast}\left(  x\right)  \rho_{\ast
}\left(  x\right)  \right]  h^{\prime}\left(  x\right)  +g_{\ast}^{2}\left(
x\right)  \rho_{\ast}\left(  x\right)  h^{\prime\prime}\left(  x\right) \\
&  =\left(  2-g_{\ast}^{\prime\prime}\left(  x\right)  \right)  \int_{x}%
^{u}\left[  1-\Phi\left(  s\right)  \right]  ds\int_{l}^{x}\Phi\left(
t\right)  h^{\prime}\left(  t\right)  dt-\left(  2-g_{\ast}^{\prime\prime
}\left(  x\right)  \right)  \int_{l}^{x}\Phi\left(  t\right)  dt\int_{x}%
^{u}\left[  1-\Phi\left(  s\right)  \right]  h^{\prime}\left(  s\right)  ds\\
&  \quad\quad+\left[  A\left(  x\right)  \Phi\left(  x\right)  -B\left(
x\right)  \left(  1-\Phi\left(  x\right)  \right)  -\left(  x-g_{\ast}%
^{\prime}\left(  x\right)  \right)  g_{\ast}\left(  x\right)  \rho_{\ast
}\left(  x\right)  \right]  h^{\prime}\left(  x\right)  +g_{\ast}^{2}\left(
x\right)  \rho_{\ast}\left(  x\right)  h^{\prime\prime}\left(  x\right) \\
&  =\left(  2-g_{\ast}^{\prime\prime}\left(  x\right)  \right)  g_{\ast}%
^{2}\left(  x\right)  \rho_{\ast}\left(  x\right)  f^{\prime}\left(  x\right)
+0\cdot h^{\prime}\left(  x\right)  +g_{\ast}^{2}\left(  x\right)  \rho_{\ast
}\left(  x\right)  h^{\prime\prime}\left(  x\right)
\end{align*}
and so%
\begin{align*}
\lim_{x\rightarrow l}f^{\prime\prime}\left(  x\right)   &  =\lim_{x\rightarrow
l}\frac{\left(  2-g_{\ast}^{\prime\prime}\left(  x\right)  \right)  g_{\ast
}^{2}\left(  x\right)  \rho_{\ast}\left(  x\right)  f^{\prime}\left(
x\right)  +g_{\ast}^{2}\left(  x\right)  \rho_{\ast}\left(  x\right)
h^{\prime\prime}\left(  x\right)  }{\left(  2g_{\ast}^{\prime}\left(
x\right)  -x\right)  g_{\ast}^{2}\left(  x\right)  \rho_{\ast}\left(
x\right)  }\\
&  =\lim_{x\rightarrow l}\frac{2-g_{\ast}^{\prime\prime}\left(  x\right)
}{2g_{\ast}^{\prime}\left(  x\right)  -x}f^{\prime}\left(  x\right)
+\lim_{x\rightarrow l}\frac{h^{\prime\prime}\left(  x\right)  }{2g_{\ast
}^{\prime}\left(  x\right)  -x}\text{.}%
\end{align*}
Define the function $h\left(  x\right)  =\frac{4}{3}\left(  x-l\right)
^{3/2}$ on $\left(  l,0\right)  $, $h\left(  x\right)  =\frac{4}{3}\left|
l\right|  ^{3/2}$ on $\left[  0,\infty\right)  $\ and $h\left(  x\right)  =0$
on $\left(  -\infty,l\right]  $. This function is clearly Lipschitz. Note that
$h^{\prime\prime}\left(  x\right)  =\frac{1}{\sqrt{x-l}}$ on $\left(
l,0\right)  $. We now consider the same assumptions from Theorem
\ref{prop_fprimebound} and see that $\varlimsup_{x\rightarrow l}\left|
f^{\prime}\left(  x\right)  \right|  \leq k\left\|  h^{\prime}\right\|
_{\infty}$ and $\lim_{x\rightarrow l}\frac{h^{\prime\prime}\left(  x\right)
}{2g_{\ast}^{\prime}\left(  x\right)  -x}=\infty$. We have thus found a Lipschitz function $h$ for which $\lim_{x\rightarrow l}\left|  f^{\prime\prime}\left(  x\right)  \right|
=\infty$.

\begin{remark}
From the above discussion we can't expect to have a universal bound on the
second derivative of $f$ unless the support of the target r.v. is
$(-\infty,\infty)$. This is consistent with the known NP bound in Wiener-Poisson space developed in \cite{Viquez}, where $Z$ was Normal and
hence had $(-\infty,\infty)$ for support. For the rest of this subsection, we will then assume that $l=-\infty$ and $u=\infty$.
\end{remark}

\begin{theorem}
\label{thm_fprime2}Assume that $g_{\ast}$ is twice differentiable and $g_{\ast}^{\prime\prime}(x)<2$. Suppose too that
$\left|  \frac{x-g_{\ast}^{\prime}\left(  x\right)  }{x^{2}-xg_{\ast}^{\prime
}\left(  x\right)  +g_{\ast}\left(  x\right)  }\right|  $ is bounded as
$x\rightarrow -\infty$ and as $x\rightarrow \infty$. Then the solution $f$ of the Stein equation (\ref{eqn_steineqn}), for a given
test function $h$ with $\left\|
h^{\prime}\right\|  _{\infty}<\infty$, has second derivative bounded as follows:%
\begin{equation}
\left\|  f''\right\|  _{\infty}\leq k\left\|  h^{\prime}\right\|  _{\infty}
\label{eqn_fprimeboundtudor2}%
\end{equation}
where the constant $k$ depends on $Z$ alone, and not on $h$.
\end{theorem}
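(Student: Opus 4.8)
The plan is to work directly from the explicit representation (\ref{eqn_fprime2}) of $f''$ and reduce the claim to a bound on a deterministic quantity depending only on $Z$. Write $I_1(x)=\int_x^u[1-\Phi(s)]\,ds$ and $I_2(x)=\int_l^x\Phi(t)\,dt$, and $P(x)=x^2-xg_*'(x)+g_*(x)$, so that by (\ref{eqn_phi1}) and (\ref{eqn_phi2}) the coefficients (\ref{eqn_ax}),(\ref{eqn_bx}) read $A=(x-g_*')g_*\rho_*-P(1-\Phi)$ and $B=-(x-g_*')g_*\rho_*-P\Phi$. First I would record the two algebraic identities $I_2(x)-I_1(x)=x$ and $(1-\Phi)I_2+\Phi I_1=g_*\rho_*$, which follow at once from (\ref{eqn_phi1})--(\ref{eqn_phi2}); together they give the crucial cancellation $A I_2+B I_1+g_*^2\rho_*=0$. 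Splitting $h'(t)=[h'(t)-h'(x)]+h'(x)$ inside each integral of (\ref{eqn_fprime2}) and using this identity to annihilate the $h'(x)$-part, the numerator of $f''$ becomes $A\int_l^x\Phi(t)[h'(t)-h'(x)]\,dt+B\int_x^u[1-\Phi(s)][h'(s)-h'(x)]\,ds$. Since $|h'(t)-h'(x)|\le 2\|h'\|_\infty$, this yields
\[
|f''(x)|\le 2\|h'\|_\infty\,\frac{|A(x)|I_2(x)+|B(x)|I_1(x)}{g_*^3(x)\rho_*(x)}.
\]
The right-hand ratio no longer involves $h$, so it suffices to show that $K:=\sup_{x\in(l,u)}\frac{|A|I_2+|B|I_1}{g_*^3\rho_*}$ is finite; then $\|f''\|_\infty\le 2K\|h'\|_\infty$, which is (\ref{eqn_fprimeboundtudor2}) with $k=2K$.

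Next I would establish the two structural facts needed to control this ratio. Differentiating $P$ gives $P'(x)=x\,(2-g_*''(x))$, so the hypothesis $g_*''<2$ forces $P$ to decrease on $(-\infty,0)$ and increase on $(0,\infty)$; hence $P(x)\ge P(0)=g_*(0)>0$ for all $x$, which legitimizes dividing by $P$ and is exactly the positivity that the boundedness hypothesis on $|x-g_*'|/P$ presupposes. On any compact $[-M,M]\subset(l,u)$ the functions $|A|,|B|,I_1,I_2$ are bounded while $g_*^3\rho_*$ is continuous and strictly positive, hence bounded below; this bounds the ratio on $[-M,M]$ exactly as in the proof of Theorem \ref{prop_fprimebound}. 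It therefore remains only to show that $\varlimsup_{x\to+\infty}$ and $\varlimsup_{x\to-\infty}$ of the ratio are finite, and by symmetry I would treat only $x\to+\infty$.

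For the tail I would argue by L'H\^opital, which applies because all of $|A|I_2$, $|B|I_1$ and $g_*^3\rho_*$ tend to $0$ as $x\to\infty$. The useful derivative identities are $A'(x)=(2-g_*''(x))\,I_1(x)$ and $B'(x)=-(2-g_*''(x))\,I_2(x)$ (obtained by differentiating (\ref{eqn_ax}),(\ref{eqn_bx}) and simplifying with (\ref{eqn_phi1}),(\ref{eqn_phi2})), together with $I_2'=\Phi$, $I_1'=-(1-\Phi)$, and $(g_*^3\rho_*)'=(2g_*'-x)g_*^2\rho_*$, the last coming from $(g_*\rho_*)'=-x\rho_*$. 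Differentiating numerator and denominator and simplifying with these identities reduces each tail limit to expressions governed by $\frac{2-g_*''}{2g_*'-x}$ and $\frac{x-g_*'}{P}$; the hypothesis that $|x-g_*'|/P$ stays bounded as $x\to\pm\infty$, together with $g_*''<2$ and the elementary tail estimates $0\le x[1-\Phi(x)]\le g_*(x)\rho_*(x)\to0$ and $x\,g_*(x)\rho_*(x)=x\int_x^\infty y\rho_*(y)\,dy\le\int_x^\infty y^2\rho_*(y)\,dy\to0$ (finite variance), is precisely what makes these limits finite. I expect this tail computation to be the main obstacle: each of $|A|,|B|,I_1,I_2$ and $g_*^3\rho_*$ individually degenerates at infinity, so the finiteness of the ratio rests entirely on the cancellations encoded by the two hypotheses, and carrying the L'H\^opital argument through---possibly iterated once more---while correctly tracking these competing rates is the delicate part. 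The $x\to-\infty$ case is handled identically, and combining the compact and tail bounds produces the uniform constant $k$, completing the proof.
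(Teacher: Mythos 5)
Your reduction is correct as far as it goes, and in fact your identity $AI_2+BI_1=-g_{\ast}^{2}\rho_{\ast}$ is exactly the cancellation that the paper's proof exploits. But there is a genuine gap: after the splitting trick, everything rests on $K:=\sup_{x}\bigl(|A|I_2+|B|I_1\bigr)/\bigl(g_{\ast}^{3}\rho_{\ast}\bigr)<\infty$, and your treatment of the tails is a plan, not a proof. The L'H\^opital route you sketch faces serious obstacles: $|A|$ and $|B|$ are not differentiable where $A$ or $B$ changes sign; L'H\^opital controls limits (one needs the derivative-ratio limit to exist, or a one-sided version for $\varlimsup$); and, most importantly, without sign information you must control $|A|I_2$ and $|B|I_1$ separately, and each involves a massive internal cancellation. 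For $Z$ standard normal, for instance, $A(x)=x\rho_{\ast}(x)-(x^{2}+1)\left[1-\Phi(x)\right]$ is a difference of two terms each of order $x\rho_{\ast}(x)$ whose difference is only $O\!\left(\rho_{\ast}(x)/x^{3}\right)$, so any term-by-term bound fails, and differentiating $AI_2$ reproduces $A$ (via $A'I_2+A\Phi$), making the L'H\^opital computation recursive rather than terminating. You acknowledge this step is ``the delicate part'' and ``possibly iterated once more,'' but it is precisely the content of the theorem, and it is not carried out.

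The missing ingredient --- and the paper's actual route --- is pointwise sign information: under exactly the stated hypotheses, $A(x)\leq0$ and $B(x)\leq0$ for all $x$ (the paper cites Lemma 7 in \cite{EV}; in essence, your identities $A'=(2-g_{\ast}'')I_1\geq0$ and $B'=-(2-g_{\ast}'')I_2\leq0$ give monotonicity, and the boundedness of $|x-g_{\ast}'|/P$ together with the tail decay of $g_{\ast}\rho_{\ast}$ forces $A\rightarrow0$ as $x\rightarrow\infty$ and $B\rightarrow0$ as $x\rightarrow-\infty$, whence both are nonpositive). Once you have the signs, your own identity closes the argument with no asymptotic analysis at all: $|A|I_2+|B|I_1=-(AI_2+BI_1)=g_{\ast}^{2}\rho_{\ast}$, so your ratio equals $1/g_{\ast}(x)$ exactly, and Assumption B (with $l=-\infty$, $u=\infty$) supplies a uniform $m_{0}>0$ with $g_{\ast}\geq m_{0}$, giving $\left\|f''\right\|_{\infty}\leq(2/m_{0})\left\|h'\right\|_{\infty}$ --- the same constant the paper obtains by bounding $|h'|\leq\left\|h'\right\|_{\infty}$ directly in (\ref{eqn_fprime2}) and collapsing the numerator to $2g_{\ast}^{2}\rho_{\ast}$. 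So you were one lemma away from a complete proof; but that lemma (the signs of $A$ and $B$), not the L'H\^opital computation, is where the hypotheses $g_{\ast}''<2$ and the boundedness of $|x-g_{\ast}'|/(x^{2}-xg_{\ast}'+g_{\ast})$ actually do their work.
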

\begin{proof}
Recall the functions $A$ and $B$ in (\ref{eqn_ax}) and (\ref{eqn_bx}). Using
Lemma 7 in \cite{EV} (note that $\Phi$ there is defined as the upper
probability tail), $A\left(  x\right)  \leq0$ and $B\left(  x\right)  \leq0$.
Therefore, from (\ref{eqn_fprime2}),
\begin{align*}
\left|  f^{\prime\prime}\left(  x\right)  \right|   &  \leq\frac{-A\left(
x\right)  }{g_{\ast}^{3}\left(  x\right)  \rho_{\ast}\left(  x\right)  }%
\int_{l}^{x}\Phi\left(  t\right)  dt\cdot\left\|  h^{\prime}\right\|
_{\infty}+\frac{-B\left(  x\right)  }{g_{\ast}^{3}\left(  x\right)  \rho
_{\ast}\left(  x\right)  }\int_{x}^{u}\left[  1-\Phi\left(  s\right)  \right]
ds\cdot\left\|  h^{\prime}\right\|  _{\infty}+\frac{\left|  h^{\prime}\left(
x\right)  \right|  }{g_{\ast}\left(  x\right)  }\\
\frac{g_{\ast}^{3}\left(  x\right)  \rho_{\ast}\left(  x\right)  \left|
f^{\prime\prime}\left(  x\right)  \right|  }{\left\|  h^{\prime}\right\|
_{\infty}}  &  \leq\left[  -\left(  x-g_{\ast}^{\prime}\left(  x\right)
\right)  \int_{x}^{u}\left[  1-\Phi\left(  s\right)  \right]  ds+g_{\ast
}\left(  x\right)  \left(  1-\Phi\left(  x\right)  \right)  \right]  \int
_{l}^{x}\Phi\left(  t\right)  dt\\
&  \quad\quad+\left[  \left(  x-g_{\ast}^{\prime}\left(  x\right)  \right)
\int_{l}^{x}\Phi\left(  t\right)  dt+g_{\ast}\left(  x\right)  \Phi\left(
x\right)  \right]  \int_{x}^{u}\left[  1-\Phi\left(  s\right)  \right]
ds+g_{\ast}^{2}\left(  x\right)  \rho_{\ast}\left(  x\right) \\
&  =g_{\ast}\left(  x\right)  \left(  1-\Phi\left(  x\right)  \right)
\int_{l}^{x}\Phi\left(  t\right)  dt+g_{\ast}\left(  x\right)  \Phi\left(
x\right)  \int_{x}^{u}\left[  1-\Phi\left(  s\right)  \right]  ds+g_{\ast}%
^{2}\left(  x\right)  \rho_{\ast}\left(  x\right) \\
&  =g_{\ast}\left(  x\right)  \left(  1-\Phi\left(  x\right)  \right)  \left(
g_{\ast}\left(  x\right)  \rho_{\ast}\left(  x\right)  +x\Phi\left(  x\right)
\right)  +g_{\ast}\left(  x\right)  \Phi\left(  x\right)  \left(  g_{\ast
}\left(  x\right)  \rho_{\ast}\left(  x\right)  -x\left[  1-\Phi\left(
x\right)  \right]  \right) \\
& \qquad +g_{\ast}^{2}\left(  x\right)  \rho_{\ast}\left(
x\right) \\
&  =2g_{\ast}^{2}\left(  x\right)  \rho_{\ast}\left(  x\right)  \text{.}%
\end{align*}
Due to the continuity of $g_{\ast}$ and conditions of Assumption B when $l=-\infty$ and $u=\infty$, there is some $m_{0}>0$ such that $g_{\ast}(x)>m_{0}$ for all $x\in\mathbb{R}$. Then, $|f^{\prime\prime}(x)|\leq\frac{2||h^{\prime}||_{\infty}}{g_{\ast}(x)}\leq\frac{2}{m_{0}}||h^{\prime}||_{\infty}=k||h^{\prime}||_{\infty}$.
\end{proof}

\bigskip

One might think at first glance that the conditions of Theorem
\ref{thm_fprime2} are too restrictive. However, a closer look will show that
they are all satisfied by members of the Pearson family having $(-\infty
,\infty)$ as its support. Examples are the Pearson Type IV, Normal, and
Student's T distributions (see Table \ref{table_geestar} to check the
conditions).

\begin{description}
\item[Assumption B$^{\prime}$] Along with Assumption B, the following hold.
\begin{enumerate}

\item $g_*$ is twice differentiable and $g_*''<2$.

\item $\varlimsup_{x\to\pm\infty}\left|  \frac{x-g_{\ast}^{\prime}\left(  x\right)  }{x^{2}-xg_{\ast}^{\prime
}\left(  x\right)  +g_{\ast}\left(  x\right)  }\right|  <\infty$.

\end{enumerate}
\end{description}

\section{\label{sec_Wresults}NP bound in Wiener space}

From the results in subsection \ref{fprime} all solutions of the Stein equation belong to the set $\mathscr{F}_{\mathcal{H}}=\{f\in \mathcal{C}^1(l,u) : ||f'||_{\infty}\leq k\}$, where the constant $k$ depends on the distance $d_{\mathcal{H}}$ used (and so it implicitly depends on the set $\mathcal{H}$).

\begin{theorem}
\label{thm_boundnew}(NP bound) Let $d_{\mathcal{H}}$ be $d_{W}$ or $d_{FM}$. Under
Assumptions A and B,%
\begin{align}
d_{\mathcal{H}}\left(  X,Z\right)  &\leq k\mathbb{E}\left|  g_{\ast}\left(  X\right)
-g_{X}\right|  \leq k\mathbb{E}\left[  \left(  g_{\ast}\left(  X\right)
-g_{X}\right)  ^{2}\right]  ^{1/2} \label{eqn_inequality1}\\
&\leq k\sqrt{\left|  \mathbb{E}\left[  g_{\ast}\left(
X\right)  ^{2}\right]  -\mathbb{E}\left[  g_{\ast}\left(  Z\right)
^{2}\right]  \right|  +\left|  \mathbb{E}\left[  g_*(X)g_X
\right]  -\mathbb{E}\left[  g_{\ast}\left(  Z\right)g_Z  \right]  \right|
+\left|  \mathbb{E}\left[  g_{X}^{2}\right]  -\mathbb{E}\left[  g_{Z}%
^{2}\right]  \right|  }\text{.} \label{eqn_inequality3}
\end{align}

Let $G_{\ast}\left(  x\right)  $ be an antiderivative of $g_{\ast}\left(
x\right)  $. Under Assumptions A$^{\prime}$ and B,
\begin{equation}
d_{\mathcal{H}}\left(  X,Z\right)  \leq k\sqrt{\left|  \mathbb{E}\left[  g_{\ast}\left(
X\right)  ^{2}\right]  -\mathbb{E}\left[  g_{\ast}\left(  Z\right)
^{2}\right]  \right|  +\left|  \mathbb{E}\left[  XG_{\ast}\left(  X\right)
\right]  -\mathbb{E}\left[  ZG_{\ast}\left(  Z\right)  \right]  \right|
+\left|  \mathbb{E}\left[  g_{X}^{2}\right]  -\mathbb{E}\left[  g_{Z}%
^{2}\right]  \right|  }\text{.} \label{eqn_inequality2}%
\end{equation}
In both statements, $k$ is a finite constant depending only on $Z$ and on $d_{\mathcal{H}}$.
\end{theorem}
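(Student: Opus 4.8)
The plan is to start from the reduction already prepared in equation (\ref{eqn_basicbound}), namely
\[
d_{\mathcal{H}}(X,Z)\leq\sup_{f\in\mathscr{F}_{\mathcal{H}}}\bigl|\mathbb{E}\left[f'(X)\left(g_{\ast}(X)-g_X\right)\right]\bigr|,
\]
and to control the factor $f'(X)$ uniformly over the Stein solutions. For $h\in\mathcal{H}_{W}$ or $h\in\mathcal{H}_{FM}$ the test function $h$ is $1$-Lipschitz, hence a.e. differentiable with $\|h'\|_{\infty}\leq\|h\|_{L}\leq1$; Theorem \ref{prop_fprimebound}, applicable under Assumption B, then yields $\|f'\|_{\infty}\leq k\|h'\|_{\infty}\leq k$ for every $f\in\mathscr{F}_{\mathcal{H}}$, with $k$ depending only on $Z$ and on $d_{\mathcal{H}}$. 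Pulling this sup-norm bound out of the expectation gives $d_{\mathcal{H}}(X,Z)\leq k\,\mathbb{E}|g_{\ast}(X)-g_X|$, and one application of Jensen's inequality upgrades this to $k\,\mathbb{E}[(g_{\ast}(X)-g_X)^{2}]^{1/2}$, which is (\ref{eqn_inequality1}).

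For (\ref{eqn_inequality3}) I would expand the square and exploit $g_{\ast}(Z)=g_Z$ a.s. Writing
\[
\mathbb{E}\left[(g_{\ast}(X)-g_X)^{2}\right]=\mathbb{E}[g_{\ast}(X)^{2}]-2\mathbb{E}[g_{\ast}(X)g_X]+\mathbb{E}[g_X^{2}],
\]
I observe that the analogous combination for $Z$ vanishes, since $\mathbb{E}[g_{\ast}(Z)^{2}]=\mathbb{E}[g_{\ast}(Z)g_Z]=\mathbb{E}[g_Z^{2}]$ (all three coincide because $g_{\ast}(Z)=g_Z$). Subtracting this zero quantity term by term recasts the right-hand side as
\[
\bigl(\mathbb{E}[g_{\ast}(X)^{2}]-\mathbb{E}[g_{\ast}(Z)^{2}]\bigr)-2\bigl(\mathbb{E}[g_{\ast}(X)g_X]-\mathbb{E}[g_{\ast}(Z)g_Z]\bigr)+\bigl(\mathbb{E}[g_X^{2}]-\mathbb{E}[g_Z^{2}]\bigr).
\]
The triangle inequality together with the elementary estimate $a+2b+c\leq2(a+b+c)$ for the three nonnegative absolute values then gives the claim, the stray factor $\sqrt{2}$ being absorbed into the generic constant $k$.

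For the bound (\ref{eqn_inequality2}) under Assumptions A$^{\prime}$ and B I would modify only the mixed middle term. Taking $f=G_{\ast}$ in the integration-by-parts formula (\ref{eqn_integpartsW})—legitimate under A$^{\prime}$, as recorded in (\ref{eqn_biggstar})—converts $\mathbb{E}[g_{\ast}(X)g_X]$ into $\mathbb{E}[XG_{\ast}(X)]$. The same step applied to $Z$, which lies in $\mathbb{D}^{1,2}$ and has a density by Assumption A, yields $\mathbb{E}[g_{\ast}(Z)g_Z]=\mathbb{E}[ZG_{\ast}(Z)]$ after using $g_{\ast}(Z)=g_Z$. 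Substituting both identities into (\ref{eqn_inequality3}) produces (\ref{eqn_inequality2}).

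The routine ingredients are the expansion of the square and the triangle inequality; the genuine content sits in the two imported facts. The principal obstacle—already dispatched by Theorem \ref{prop_fprimebound}—is the uniform sup-norm control of $f'$ across all Stein solutions, which is precisely why Assumption B is imposed; the attendant subtlety is that a Lipschitz $h$ is only a.e. differentiable, so the bound must be read through $\|h'\|_{\infty}\leq\|h\|_{L}$ rather than through any pointwise derivative. The second delicate point is justifying (\ref{eqn_integpartsW}) with the possibly unbounded antiderivative $f=G_{\ast}$; this is exactly where the dichotomy in Assumption A$^{\prime}$ (either $\|g_{\ast}\|_{\infty}<\infty$ or $X$ has a density) is invoked, ensuring the integration by parts is valid for both $X$ and $Z$.
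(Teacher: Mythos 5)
Your proposal is correct and takes essentially the same route as the paper's own proof: the uniform bound on $f'$ from Theorem \ref{prop_fprimebound} applied to (\ref{eqn_basicbound}), H\"older's inequality for the $L^{2}$ upgrade, expansion of $\mathbb{E}\left[\left(g_{\ast}(X)-g_{X}\right)^{2}\right]$ against the vanishing quantity $\mathbb{E}\left[\left(g_{\ast}(Z)-g_{Z}\right)^{2}\right]=0$, and the identity (\ref{eqn_biggstar}) under Assumption A$^{\prime}$ to replace the mixed terms. You are in fact slightly more careful than the paper, which passes silently over the factor $2$ on the mixed term: your estimate $a+2b+c\leq 2(a+b+c)$ with the stray $\sqrt{2}$ absorbed into the constant $k$ is exactly the remark needed to make the displayed chain literally valid.
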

\begin{proof}
The first bound in (\ref{eqn_inequality1}) follows from (\ref{eqn_basicbound})
and Theorem \ref{prop_fprimebound}. The second bound follows from H\"{o}lder's
Inequality. Let $\Delta=\mathbb{E}\left[  \left(  g_{\ast}\left(  X\right)
-g_{X}\right)  ^{2}\right]  ^{1/2}$. %
Since $\left(  g_{\ast}\left(  Z\right)  -g_{Z}\right)  ^{2}=0$ a.s.,
\[
\Delta^{2}=\mathbb{E}\left[  g_{\ast}\left(  X\right)  ^{2}\right]
-2\mathbb{E}\left[  g_{\ast}\left(  X\right)g_X  \right]  +\mathbb{E}\left[
g_{X}^{2}\right]  -\left(  \mathbb{E}\left[  g_{\ast}\left(  Z\right)
^{2}\right]  -2\mathbb{E}\left[  g_{\ast}\left(  Z\right)g_Z  \right]
+\mathbb{E}\left[  g_{Z}^{2}\right]  \right)
\]
and (\ref{eqn_inequality3}) follows. From (\ref{eqn_biggstar}) and Assumption A$^{\prime}$ we have $\mathbb{E}\left[g_*(F)g_F\right]=\mathbb{E}\left[  FG_{\ast}\left(  F\right)  \right]$, which proves (\ref{eqn_inequality2}).
\end{proof}

\bigskip

The first inequality also follows from Theorem 1 and equation (19) in Kusuoka
and Tudor \cite{KT}. The setup in their paper involves functions $b$ and
$a$. The function $b$ is any function for which $\int_{l}%
^{u}b\left(  x\right)  \rho_{\ast}\left(  x\right)  dx=0$ along with a few
other mild conditions: $b>0$ near $l$, $b<0$ near $u$, $b\rho_{\ast}$ is
continuous and bounded on $\left(  l,u\right)  $. They then defined $a\left(
x\right)  =2\int_{l}^{x}b\left(  y\right)  \rho_{\ast}\left(  y\right)
dy/\rho_{\ast}\left(  x\right)  $. Then for $W$ a standard Brownian motion,
the SDE
\begin{equation}
dY_{t}=b\left(  Y_{t}\right)  dt+\sqrt{a\left(  Y_{t}\right)  }dW_{t}
\label{eqn_sde}%
\end{equation}
has a unique Markovian weak solution with invariant density $\rho_{\ast}$.
With $a$ and $b$ as given above, from Theorem 1 in \cite{KT},
\begin{equation}
d_{\mathcal{H}}\left(  X,Z\right)  \leq k\mathbb{E}\left|  \frac{a\left(  X\right)  }%
{2}-\left\langle DX,DL^{-1}\left\{  b\left(  X\right)  -\mathbb{E}b\left(
X\right)  \right\}  \right\rangle \right|  +k\left|  \mathbb{E}b\left(
X\right)  \right|  \text{.} \label{eqn_tudor}%
\end{equation}
If we take $b\left(  x\right)
=-x$, it follows that $a\left(  x\right)  =2g_{\ast}\left(  x\right)  $. If $X$ is centered, the right-hand side of (\ref{eqn_tudor}) quickly reduces to $k\mathbb{E}\left|
g_{\ast}\left(  X\right)  -g_{X}\right|  $.

\bigskip

While the results in \cite{KT} appear more general, taking $b\left(  x\right)
=-x$ suffices. A careful analysis will reveal that the proofs of their main
results depend only on the density $\rho_{\ast}$ and the choice of $b$. While
each choice of $b$ arguably yields a different diffusion process $Y$, the
invariant density is still $\rho_{\ast}$. Their analytical proofs are in fact
independent of the stochastic differential equation (\ref{eqn_sde}) and the
diffusion process arising from it. For this paper, we only need comparisons
with the law of the reference variable $Z$. To this end, knowing the density
$\rho_{\ast}$ will suffice. The computations using $b\left(  x\right)  =-x$
and $a\left(  x\right)  =2g_{\ast}\left(  x\right)  $ are much easier and this
is reflected in the simplicity of (\ref{eqn_inequality1}) compared to
(\ref{eqn_tudor}).

\bigskip

Furthermore, as shown in the next theorem, the bounds we get from
taking $b\left(  x\right)  =-x$ (see Theorem \ref{thm_boundnew}) are tight.
Indeed, nothing is lost by choosing $b$ this way.

\begin{theorem}
\label{thm_boundtight}(Law Characterization) $X\overset{\text{Law}}{=}Z$ if and only if all of the
following are satisfied.
\end{theorem}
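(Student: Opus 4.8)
The plan is to prove the two implications separately. The ``if'' direction rests on the algebraic identity for $\Delta^{2}$ already established inside the proof of Theorem \ref{thm_boundnew}, while the ``only if'' direction rests on the Nourdin--Viens characterization of $g_{\ast}$ from \cite{NV}. Throughout I take the three conditions to be the vanishing of the three summands appearing under the radical in (\ref{eqn_inequality3}), namely $\mathbb{E}[g_{\ast}(X)^{2}]=\mathbb{E}[g_{\ast}(Z)^{2}]$, $\mathbb{E}[g_{\ast}(X)g_{X}]=\mathbb{E}[g_{\ast}(Z)g_{Z}]$, and $\mathbb{E}[g_{X}^{2}]=\mathbb{E}[g_{Z}^{2}]$ (under Assumption A$'$ the middle one may equivalently be written $\mathbb{E}[XG_{\ast}(X)]=\mathbb{E}[ZG_{\ast}(Z)]$ by (\ref{eqn_biggstar}), matching (\ref{eqn_inequality2})). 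For the ``if'' direction I would substitute these three equalities into the expansion $\Delta^{2}=\mathbb{E}[g_{\ast}(X)^{2}]-2\mathbb{E}[g_{\ast}(X)g_{X}]+\mathbb{E}[g_{X}^{2}]-\bigl(\mathbb{E}[g_{\ast}(Z)^{2}]-2\mathbb{E}[g_{\ast}(Z)g_{Z}]+\mathbb{E}[g_{Z}^{2}]\bigr)$ used in Theorem \ref{thm_boundnew}. The three matched pairs cancel termwise, leaving $\Delta^{2}=\mathbb{E}[(g_{\ast}(X)-g_{X})^{2}]=0$, so $g_{X}=g_{\ast}(X)$ a.s. The first bound (\ref{eqn_inequality1}) then yields $d_{\mathcal{H}}(X,Z)\le k\,\mathbb{E}|g_{\ast}(X)-g_{X}|=0$, and since $d_{FM}$ (and a fortiori $d_{W}$) is a genuine metric on laws metrizing convergence in distribution, $d_{\mathcal{H}}(X,Z)=0$ forces $X\overset{\text{Law}}{=}Z$.

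For the ``only if'' direction, assume $X\overset{\text{Law}}{=}Z$. The first equality is immediate: $g_{\ast}^{2}$ is a fixed Borel function, so $\mathbb{E}[g_{\ast}(X)^{2}]=\mathbb{E}[g_{\ast}(Z)^{2}]$; likewise $x\mapsto xG_{\ast}(x)$ is deterministic, giving $\mathbb{E}[XG_{\ast}(X)]=\mathbb{E}[ZG_{\ast}(Z)]$. The substantive step is the third equality. Here I would invoke the Nourdin--Viens representation (Theorem 3.1 \cite{NV}): the random variable $g_{X}=\mathbb{E}[\langle DX,-DL^{-1}X\rangle_{\mathfrak{H}}\mid X]$ equals $g_{\ast}^{X}(X)$ a.s., where $g_{\ast}^{X}$ is the function built from the density $\rho_{X}$ of $X$ exactly as $g_{\ast}$ is built from $\rho_{\ast}$ via (\ref{eqn_gstargee}), and hence depends \emph{only} on the law of $X$. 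Since $Z$ has a density (Assumption A), so does $X$, and the representation applies; moreover $X\overset{\text{Law}}{=}Z$ gives $\rho_{X}=\rho_{\ast}$ and therefore $g_{\ast}^{X}=g_{\ast}$. Consequently $g_{X}=g_{\ast}(X)$ a.s., whence $\mathbb{E}[g_{X}^{2}]=\mathbb{E}[g_{\ast}(X)^{2}]=\mathbb{E}[g_{\ast}(Z)^{2}]=\mathbb{E}[g_{Z}^{2}]$.

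The main obstacle is precisely this third equality in the ``only if'' direction. Unlike the first two summands, $g_{X}$ is not an a priori prescribed function evaluated at $X$, so equality of laws does not transfer by inspection. The crux is recognizing through \cite{NV} that $g_{X}$ is, almost surely, a function of $X$ determined solely by the law of $X$, which collapses the claim to the bare fact that $X$ and $Z$ share the same density. A secondary point to handle carefully is verifying that $X$ genuinely possesses a density so that the density formula for $g_{\ast}^{X}$ is available; this is automatic here because $Z$ has a continuous density and $X$ is equal to $Z$ in law. With that in place, both directions close, showing that the three moment-matching conditions characterize distributional equality and confirming that the choice $b(x)=-x$ loses nothing.
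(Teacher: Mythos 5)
Your proposal is correct and takes essentially the same route as the paper: the ``if'' direction is precisely the cancellation inside Theorem \ref{thm_boundnew} (using (\ref{eqn_biggstar}) to pass between $\mathbb{E}[g_{\ast}(X)g_{X}]$ and $\mathbb{E}[XG_{\ast}(X)]$), and the ``only if'' direction rests on the same key fact that $g_{X}$ is a.s.\ a function of $X$ determined solely by the law of $X$, so equality of densities forces $g_{X}=g_{\ast}(X)$ a.s. The only cosmetic difference is that you cite Theorem 3.1 of \cite{NV} for this representation of $g_{X}$, whereas the paper re-derives it by the same test-function/integration-by-parts computation (explicitly ``imitating'' the Nourdin--Viens proof); the substance is identical.
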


\begin{enumerate}
\item $\mathbb{E}\left[  g_{\ast}\left(  X\right)  ^{2}\right]  =\mathbb{E}%
\left[  g_{\ast}\left(  Z\right)  ^{2}\right]  $

\item $\mathbb{E}\left[  XG_{\ast}\left(  X\right)  \right]  =\mathbb{E}%
\left[  ZG_{\ast}\left(  Z\right)  \right]  $

\item $\mathbb{E}\left[  g_{X}^{2}\right]  =\mathbb{E}\left[  g_{Z}%
^{2}\right]  $
\end{enumerate}
\begin{proof}
If the three conditions are satisfied, Theorem \ref{thm_boundnew} implies
$d\left(  X,Z\right)  =0$.

Now suppose $X\overset{\text{Law}}{=}Z$. They then have the same density
$\rho_{\ast}$ so \textbf{1} and \textbf{2} immediately follow. We next prove
that $g_{X}\overset{\text{Law}}{=}g_{Z}$, imitating the technique Nourdin and
Viens used to prove (\ref{eqn_gstargee}) (see Theorem 3.1 \cite{NV}). Let $f$
be a continuous function with compact support, and $F$ any antiderivative.
\begin{align*}
\mathbb{E}\left[  f\left(  X\right)  g_{X}\right]   &  =\mathbb{E}\left[
XF\left(  X\right)  \right]  =\int_{l}^{u}\left[  x\rho_{\ast}\left(
x\right)  \right]  F\left(  x\right)  dx\\
&  =\left.  -F\left(  x\right)  \int_{x}^{u}y\rho_{\ast}\left(  y\right)
dy\right|  _{x\rightarrow l}^{x\rightarrow u}+\int_{l}^{u}f\left(  x\right)
\left[  \int_{x}^{u}y\rho_{\ast}\left(  y\right)  dy\right]  dx\\
&  =\int_{l}^{u}f\left(  x\right)  \frac{\int_{x}^{u}y\rho_{\ast}\left(
y\right)  dy}{\rho_{\ast}\left(  x\right)  }\rho_{\ast}\left(  x\right)
dx=\mathbb{E}\left[  f\left(  X\right)  \frac{\int_{X}^{u}y\rho_{\ast}\left(
y\right)  dy}{\rho_{\ast}\left(  X\right)  }\right]
\end{align*}
so $g_{X}=\int_{X}^{u}y\rho_{\ast}\left(  y\right)  dy/\rho_{\ast}\left(
X\right)  $ a.s. This has the same distribution as $\int_{Z}^{u}y\rho_{\ast
}\left(  y\right)  dy/\rho_{\ast}\left(  Z\right)  $, equal to $g_{Z}$ a.s.,
so \textbf{3} then follows.
\end{proof}

\begin{remark}
We see that $\mathbb{E}\left[  g_{\ast}\left(  Z\right)  ^{2}\right]
=\mathbb{E}\left[  ZG_{\ast}\left(  Z\right)  \right]  =\mathbb{E}\left[
g_{Z}^{2}\right]  $ (see Lemma \ref{lem_expec}). Thus, for $X$ to have the
same law as $Z$, it is necessary and sufficient that $\mathbb{E}\left[
g_{\ast}\left(  X\right)  ^{2}\right]  $, $\mathbb{E}\left[  XG_{\ast}\left(
X\right)  \right]  $ and $\mathbb{E}\left[  g_{X}^{2}\right]  $ (which a
priori need not be all the same) are all equal to $\mathbb{E}\left[  g_{Z}%
^{2}\right]  $. The three conditions in Theorem \ref{thm_boundtight} are
stated in their current form due to the symmetry involved.
\end{remark}

That $\mathbb{E}\left[  g_{\ast}\left(  Z\right)  ^{2}\right]  =\mathbb{E}%
\left[  ZG_{\ast}\left(  Z\right)  \right]  =\mathbb{E}\left[  g_{Z}%
^{2}\right]  $ are all equal depends on the specific structure of $Z$ itself,
and it is rooted in how $g_{\ast}$ (and thus $G_{\ast}$ as well) is defined in
terms of the law of $Z$. Specifically, it is because $g_{\ast}\left(
Z\right)  =g_{Z}$ that we are able to use the integration by parts formula
(\ref{eqn_integpartsW}) on $g_{\ast}\left(  Z\right)  $. If we evaluate the
function $g_{\ast}$ at the random variable $X$ , we cannot expect $g_{\ast
}\left(  X\right)  $ to be equal to $g_{X}$ because $g_{\ast}$ is an object
that ``belongs'' to $Z$. However, if $X$ and $Z$ are to be ``almost'' the same
in law, we would expect $X$ to ``almost'' satisfy the same relations/equations
for $Z$, e.g. $\mathbb{E}\left[  g_{\ast}\left(  X\right)  ^{2}\right]
``="\mathbb{E}\left[  XG_{\ast}\left(  X\right)  \right]  $. If $g_{\ast}$ is
a polynomial, then this amounts to checking that the moments of $X$ satisfy
the same conditions met by the moments of $Z$. Granted, this method of moments
is not sufficient. Hence, the need for condition \textbf{3}, $\mathbb{E}\left[
g_{X}^{2}\right]  =\mathbb{E}\left[  g_{Z}^{2}\right]  $, in Theorem
\ref{thm_boundtight}.

\bigskip

The following versions of Theorem \ref{thm_boundtight} and Theorem
\ref{thm_boundnew} for sequences are useful.

\begin{corollary}
\label{cor_boundsequence}$X_{n}\rightarrow Z$ in distribution if all of the
following are satisfied.
\end{corollary}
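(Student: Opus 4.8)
The plan is to read this corollary as the sequential (limiting) analogue of the Law Characterization (Theorem \ref{thm_boundtight}), and to derive it directly from the quantitative bound of Theorem \ref{thm_boundnew} without redoing any Malliavin computation. Concretely, I expect the three hypotheses to be the convergences
\[
\mathbb{E}\left[g_{\ast}(X_{n})^{2}\right]\to\mathbb{E}\left[g_{\ast}(Z)^{2}\right],\quad \mathbb{E}\left[X_{n}G_{\ast}(X_{n})\right]\to\mathbb{E}\left[ZG_{\ast}(Z)\right],\quad \mathbb{E}\left[g_{X_{n}}^{2}\right]\to\mathbb{E}\left[g_{Z}^{2}\right],
\]
obtained by replacing the three equalities of Theorem \ref{thm_boundtight} with limits. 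First I would fix the metric $d_{\mathcal{H}}=d_{FM}$, since this is the distance that metrizes convergence in distribution: $d_{FM}(X_{n},Z)\to 0$ is equivalent to $X_{n}\overset{\text{Law}}{\longrightarrow}Z$.

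Next I would apply inequality (\ref{eqn_inequality2}) of Theorem \ref{thm_boundnew} to each $X_{n}$; this presumes each $X_{n}$ satisfies Assumptions A$'$ and B, which is part of the standing hypotheses imposed on every member of the sequence. The crucial structural point is that the constant $k$ appearing there depends only on $Z$ and on the chosen metric, and not on the individual $X_{n}$, so a single $k$ governs every term of the sequence. This gives
\[
d_{FM}(X_{n},Z)\leq k\sqrt{\left|\mathbb{E}\left[g_{\ast}(X_{n})^{2}\right]-\mathbb{E}\left[g_{\ast}(Z)^{2}\right]\right|+\left|\mathbb{E}\left[X_{n}G_{\ast}(X_{n})\right]-\mathbb{E}\left[ZG_{\ast}(Z)\right]\right|+\left|\mathbb{E}\left[g_{X_{n}}^{2}\right]-\mathbb{E}\left[g_{Z}^{2}\right]\right|}.
\]

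I would then let $n\to\infty$. By the three hypothesized convergences, each of the three absolute-value terms under the radical tends to $0$; since the square root is continuous at $0$, the right-hand side tends to $0$, whence $d_{FM}(X_{n},Z)\to 0$. The final step invokes the metrization property recorded in Section \ref{sec_Stein}, yielding $X_{n}\to Z$ in distribution. There is essentially no hard analytic content here, as the real work was already carried out in Theorem \ref{thm_boundnew}. The only point requiring care, and the one place a naive argument could slip, is that the uniformity of $k$ in $n$ is exactly what allows the termwise convergence of the three moment expressions to pass to convergence of the distances; were $k$ permitted to depend on $X_{n}$ and to blow up, the conclusion could fail. Because Theorem \ref{thm_boundnew} guarantees $k=k(Z,d_{\mathcal{H}})$, this potential obstacle is automatically resolved.
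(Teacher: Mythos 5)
Your proposal is correct and follows essentially the same route the paper intends: the corollary is a direct consequence of Theorem \ref{thm_boundnew}, applying the bound to each $X_{n}$ with the constant $k=k(Z,d_{\mathcal{H}})$ uniform in $n$, and concluding via the fact that $d_{FM}$ metrizes convergence in distribution. The only detail you omit is that the paper's condition \textbf{2} comes in two variants --- under Assumption A one uses $\mathbb{E}\left[g_{\ast}(X_{n})g_{X_{n}}\right]\to\mathbb{E}\left[g_{\ast}(Z)g_{Z}\right]$ together with inequality (\ref{eqn_inequality3}), while your version with $\mathbb{E}\left[X_{n}G_{\ast}(X_{n})\right]$ and inequality (\ref{eqn_inequality2}) covers the Assumption A$'$ case --- but the argument is identical in both cases.
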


\begin{enumerate}
\item $\mathbb{E}\left[  g_{\ast}\left(  X_{n}\right)  ^{2}\right]
\rightarrow\mathbb{E}\left[  g_{\ast}\left(  Z\right)  ^{2}\right]  $

\item $\mathbb{E}\left[  g_{\ast}\left(  X_{n}\right)g_{X_n}  \right]
\rightarrow\mathbb{E}\left[  g_{\ast}\left(  Z\right)g_Z  \right]  $ (under Assumption A)

\vspace{.1cm}

$\mathbb{E}\left[  X_nG_{\ast}\left(  X_{n}\right)  \right]
\rightarrow\mathbb{E}\left[  ZG_{\ast}\left(  Z\right)  \right]  $ (under Assumption A$^{\prime}$)

\item $\mathbb{E}\left[  g_{X_{n}}^{2}\right]  \rightarrow\mathbb{E}\left[
g_{Z}^{2}\right]  $
\end{enumerate}

\begin{corollary}
\label{cor_Lsequence}$X_{n}\rightarrow Z$ in distribution if $\ g_{\ast}\left(  X_{n}\right)  -g_{X_{n}}\rightarrow0$ in $L^{1}(\Omega)$
\end{corollary}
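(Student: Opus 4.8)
The plan is to read this off directly from the NP bound in Theorem~\ref{thm_boundnew}, specialized to the Fortet-Mourier metric. First I would recall from Section~\ref{sec_Stein} that $d_{FM}$ metrizes convergence in distribution: $d_{FM}(X_n, Z)\to 0$ if and only if $X_n\overset{\text{Law}}{\longrightarrow}Z$. Hence it suffices to prove that $d_{FM}(X_n, Z)\to 0$, and the whole task reduces to bounding this quantity.

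Next I would invoke Theorem~\ref{thm_boundnew} with $d_{\mathcal{H}}=d_{FM}$. Since each $X_n$ satisfies Assumption A and $Z$ satisfies Assumption B (these are in force throughout, per the remark following Assumption A), the first inequality in (\ref{eqn_inequality1}) applies verbatim to every term of the sequence:
\[
d_{FM}(X_n, Z)\leq k\,\mathbb{E}\left|g_{\ast}(X_n)-g_{X_n}\right|.
\]
The crucial structural point — and essentially the only thing that needs care — is that the constant $k$ here depends only on $Z$ and on the metric $d_{FM}$, and in particular is \emph{independent of $n$}. This uniformity is exactly what Theorem~\ref{prop_fprimebound} delivers, since the bound on $\|f'\|_{\infty}$ carries a constant depending on $Z$ alone; it is precisely this $n$-independence that allows a single inequality to transfer a convergence statement from its right-hand side to its left-hand side.

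Finally, the hypothesis that $g_{\ast}(X_n)-g_{X_n}\to 0$ in $L^1(\Omega)$ is by definition the assertion $\mathbb{E}\left|g_{\ast}(X_n)-g_{X_n}\right|\to 0$. Combining this with the uniform bound above forces $d_{FM}(X_n, Z)\to 0$, and the metrization property then yields $X_n\to Z$ in distribution, as desired. I do not anticipate a genuine obstacle here: the argument is an immediate corollary of the main Wiener-space bound, and the only substantive point to confirm is the $n$-independence of $k$ together with the choice of $d_{FM}$ (rather than $d_W$) as the working metric, since it is $d_{FM}$ that characterizes weak convergence. One could equally bound $d_W(X_n,Z)$ and appeal to $d_{FM}\leq d_W$, but working with $d_{FM}$ directly is the cleanest route.
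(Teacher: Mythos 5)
Your proposal is correct and is exactly the argument the paper intends: Corollary \ref{cor_Lsequence} is an immediate consequence of the first inequality in (\ref{eqn_inequality1}) of Theorem \ref{thm_boundnew} applied with $d_{\mathcal{H}}=d_{FM}$, using that the constant $k$ depends only on $Z$ and the metric (hence not on $n$) and that $d_{FM}$ metrizes convergence in distribution. Your emphasis on the $n$-independence of $k$ and the choice of $d_{FM}$ over $d_{W}$ identifies precisely the two points on which the deduction rests.
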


\begin{remark}
If we normalize so that $\operatorname*{Var}X=\operatorname*{Var}Z$, condition
\textbf{3} in Theorem \ref{thm_boundtight}\ can be replaced by
$\operatorname*{Var}g_{X}=\operatorname*{Var}g_{Z}$ since $\mathbb{E}\left[
g_{X}\right]  =\operatorname*{Var}X$. This also allows us to replace\ the term
$\left|  \mathbb{E}\left[  g_{X}^{2}\right]  -\mathbb{E}\left[  g_{Z}%
^{2}\right]  \right|  $ in Theorem \ref{thm_boundnew} by $\left|
\operatorname*{Var}g_{X}-\operatorname*{Var}g_{Z}\right|  $. In Corollary
\ref{cor_boundsequence}, we can replace condition \textbf{3} by
$\operatorname*{Var}g_{X_{n}}\rightarrow\operatorname*{Var}g_{Z}$ if
$\mathbb{E}\left[  X_{n}^{2}\right]  \rightarrow\mathbb{E}\left[
Z^{2}\right]  $.
\end{remark}

If $Z$ is Normal with variance $\sigma^{2}$ so $g_{\ast}\left(  y\right)
=\sigma^{2}$, $G_{\ast}\left(  y\right)  =\sigma^{2}y$ and $g_{Z}=\sigma^{2}$.
If $\operatorname*{Var}X=\sigma^{2}$, then%
\begin{equation}
d_{\mathcal{H}}\left(  X,Z\right)  \leq k\sqrt{\left|  \sigma^{4}-\sigma^{4}\right|
+\sigma^{2}\left|  \mathbb{E}\left[  X^{2}\right]  -\mathbb{E}\left[
Z^{2}\right]  \right|  +\left|  \operatorname*{Var}g_{X}-\operatorname*{Var}%
g_{Z}\right|  }=k\sqrt{\operatorname*{Var}g_{X}} \label{eqn_distnormal}%
\end{equation}
where $k=4$ if $d_{\mathcal{H}}=d_{FM}$ and $k=1$ if $d_{\mathcal{H}}=d_{W}$. This retrieves Theorem 3.3
in \cite{NPsurv}. If we have a bound on $\operatorname*{Var}g_{X}$, this may
be used to bound the distance. A Poincar\'{e}-type inequality may be used in
this regard. See \cite{NPR} (also for an explanation of the notation used
below) where they use such a bound on $\operatorname*{Var}g_{X}$ to get the
following result:%
\begin{equation}
d_{\mathcal{H}}\left(  X,Z\right)  \leq\frac{k\sqrt{10}}{2\sigma}\left(  \mathbb{E}\left[
\left\|  D^{2}X\otimes_{1}D^{2}X\right\|  _{\mathfrak{H}^{\otimes2}}^{2}\right]  \right)
^{1/2}\left(  \mathbb{E}\left[  \left\|  DX\right\|  _{\mathfrak{H}}^{4}\right]  \right)
^{1/2}\text{.} \label{eqn_Poincare}%
\end{equation}

\bigskip

This was used in \cite{NPR} and \cite{Viquez} to prove CLTs for functionals of
Gaussian subordinated fields (applied to fBm and the solution of the O-U SDE
driven by fBm, for all $H\in(0,1)$).

\subsection{\label{ssec_gpoly}Convergence when $g_{\ast}$ is a polynomial}

Many of the common random variables belong to the Pearson family of
distributions, all of whose members are characterized by their $g_{\ast}$
being polynomials of degree at most $2$, i.e. $g_{\ast}\left(  y\right)
=\alpha y^{2}+\beta y+\gamma$ in the support of $Z$. Some member distributions
in this family are Normal ($g_{\ast}$ is constant), Gamma ($g_{\ast}$ has
degree $1$), Beta ($g_{\ast}$ is quadratic with positive discriminant),
Student's T-distribution ($g_{\ast}$ is quadratic with negative discriminant)
and Inverse Gamma ($g_{\ast}$ is quadratic with zero discriminant).

\bigskip

Refer to \cite{DZ} and \cite{Stein} for more information about Pearson
distributions, and \cite{EV} for Stein's method applied to comparisons of probability tails with a Pearson $Z$. From Remark \ref{rem_geepower}, if the support of $Z$ is
unbounded and $g_{\ast}$ is a polynomial, then $Z$ is necessarily Pearson. If
$Z$ has bounded support and $g_{\ast}$ is a polynomial, $g_{\ast}$ may have
degree exceeding $2$ and in this case, $Z$ is not Pearson.

\begin{corollary}
\label{cor_gstarpoly}If $g_{\ast}$ is a polynomial $g_{\ast}\left(  x\right)
=\sum_{k=0}^{m}a_{k}x^{k}$, for the convergence $X_{n}\rightarrow Z$ in
distribution, conditions \textbf{1} and \textbf{2} in Corollary
\ref{cor_boundsequence} can be replaced by these conditions (respectively): $\mathbb{E}\left[
X_{n}^{k}\right]  \rightarrow\mathbb{E}\left[  Z^{k}\right]  $ for $k=1,\ldots,2m$, and  $\mathbb{E}\left[X_{n}^{k}g_{X_n}\right]  \rightarrow\mathbb{E}\left[  Z^{k}g_Z\right]  $ for $k=1,\ldots,m$. Under assumption A$^{\prime}$, the two conditions can be replaced by $\mathbb{E}\left[
X_{n}^{k}\right]  \rightarrow\mathbb{E}\left[  Z^{k}\right]  $ for $k = 1,\ldots,\max\left\{  2m,m+2\right\}  $.
\end{corollary}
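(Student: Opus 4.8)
The plan is to reduce conditions \textbf{1} and \textbf{2} of Corollary \ref{cor_boundsequence} to the stated moment conditions by substituting the polynomial $g_{\ast}(x)=\sum_{k=0}^{m}a_{k}x^{k}$ and invoking linearity of expectation; no new analytic input beyond the earlier results is required, so the argument is essentially bookkeeping on polynomial degrees.

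First I would handle condition \textbf{1}. Squaring gives $g_{\ast}(x)^{2}=\sum_{j,k=0}^{m}a_{j}a_{k}x^{j+k}$, a polynomial of degree $2m$, so
\[
\mathbb{E}\left[g_{\ast}(X_{n})^{2}\right]=\sum_{j,k=0}^{m}a_{j}a_{k}\,\mathbb{E}\left[X_{n}^{\,j+k}\right].
\]
Since this is a finite linear combination, the hypothesis $\mathbb{E}[X_{n}^{k}]\to\mathbb{E}[Z^{k}]$ for $k=1,\dots,2m$ (the $k=0$ term being constant) forces $\mathbb{E}[g_{\ast}(X_{n})^{2}]\to\mathbb{E}[g_{\ast}(Z)^{2}]$, which is exactly condition \textbf{1}.

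Next I would treat condition \textbf{2} under Assumption A. Writing $g_{\ast}(X_{n})g_{X_{n}}=\sum_{k=0}^{m}a_{k}X_{n}^{k}g_{X_{n}}$ gives
\[
\mathbb{E}\left[g_{\ast}(X_{n})g_{X_{n}}\right]=\sum_{k=0}^{m}a_{k}\,\mathbb{E}\left[X_{n}^{k}g_{X_{n}}\right].
\]
The terms with $k\geq1$ converge by the mixed-moment hypothesis. The single remaining term $k=0$ is $\mathbb{E}[g_{X_{n}}]$, and here I would invoke the moments formula (Proposition \ref{lem_expec} with $r=1$), which gives $\mathbb{E}[g_{X_{n}}]=\mathbb{E}[X_{n}^{2}]$; this is already controlled by the moment hypothesis (order $2\leq 2m$ for $m\geq1$). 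Hence condition \textbf{2} holds, and together with the unchanged condition \textbf{3}, Corollary \ref{cor_boundsequence} yields $X_{n}\to Z$ in distribution. Under Assumption A$^{\prime}$, condition \textbf{2} instead reads $\mathbb{E}[X_{n}G_{\ast}(X_{n})]\to\mathbb{E}[ZG_{\ast}(Z)]$; choosing the antiderivative $G_{\ast}(x)=\sum_{k=0}^{m}\tfrac{a_{k}}{k+1}x^{k+1}$ makes $xG_{\ast}(x)$ a polynomial of degree $m+2$, so $\mathbb{E}[X_{n}G_{\ast}(X_{n})]$ is a finite combination of moments of order at most $m+2$, and combining with the degree $2m$ needed for condition \textbf{1} shows that moment convergence up to order $\max\{2m,m+2\}$ suffices.

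There is no genuine obstacle here; the only points requiring care are the two degree counts and the $k=0$ bookkeeping. Specifically, one must recognize that $\mathbb{E}[g_{X_{n}}]$ equals the second moment via Proposition \ref{lem_expec}, which is why the mixed-moment condition in the Assumption A case is needed only for $k=1,\dots,m$ rather than $k=0,\dots,m$, and one must read off that $xG_{\ast}(x)$ has degree exactly $m+2$ to obtain the sharp bound $\max\{2m,m+2\}$ under Assumption A$^{\prime}$. I would also note in passing that all three statements presuppose finiteness of the relevant moments of $Z$ (and of each $X_{n}$), which is implicit in the hypothesis that the stated limits exist.
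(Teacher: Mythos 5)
Your proof is correct and takes essentially the same route as the paper's: expand $g_{\ast}^{2}$ (degree $2m$) and $xG_{\ast}$ (degree $m+2$), then use linearity of expectation so that the stated moment convergences imply conditions \textbf{1} and \textbf{2} of Corollary \ref{cor_boundsequence}, with condition \textbf{3} unchanged. Your explicit treatment of the $k=0$ term via $\mathbb{E}\left[  g_{X_{n}}\right]  =\mathbb{E}\left[  X_{n}^{2}\right]  $ from Proposition \ref{lem_expec} is a bookkeeping detail the paper's one-line proof leaves implicit, but it is the natural completion of the same degree-counting argument.
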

\begin{proof}
$g^{2}_{\ast}\left(  x\right)  $ has order $2m$ while $xG_{\ast}\left(
x\right)  $ has order $m+2$. The matching moments ensure condition \textbf{1} in Corollary \ref{cor_boundsequence} is satisfied,
and under Assumption A$^{\prime}$ also condition \textbf{2} is fulfilled.
\end{proof}

\bigskip

Suppose $g_{\ast}\left(  x\right)  =\sum_{k=0}^{m}a_{k}x^{k}$. Note that%
\[
\mathbb{E}\left[  g_{\ast}\left(  Z\right)  ^{2}\right]  =\mathbb{E}\left[
\left(  \sum_{k=0}^{m}a_{k}Z^{k}\right)  ^{2}\right]  =\sum_{k=0}^{2m}\left(
\sum_{i=0}^{k}a_{i}a_{k-i}\right)  \mathbb{E}\left[  Z^{k}\right]
\]
while
\[
\mathbb{E}\left[  ZG_{\ast}\left(  Z\right)  \right]  =\sum_{k=0}^{m}%
\frac{a_{k}}{k+1}\mathbb{E}\left[  Z^{k+2}\right]  \text{.}%
\]

We noted earlier that $\mathbb{E}\left[  g_{\ast}\left(  Z\right)
^{2}\right]  $ and $\mathbb{E}\left[  ZG_{\ast}\left(  Z\right)  \right]  $
are equal. While the polynomial coefficients of the different moments of $Z$
are different, and more moments may be involved in one expression compared to
the other, the coefficients and the moments themselves should take care of
this apparent difference to ensure equality under the expectation.

\bigskip

Suppose $Z$ is Pearson with $g_{Z}=g_{\ast}\left(  Z\right)  =\alpha
Z^{2}+\beta Z+\gamma$. Using Lemma \ref{lem_expec}, we can prove the following
recursive formula for the moments of $Z$: $\mathbb{E}\left[  Z^{r+1}\right]
=\frac{r\beta}{1-r\alpha}\mathbb{E}\left[  Z^{r}\right]  +\frac{r\gamma
}{1-r\alpha}\mathbb{E}\left[  Z^{r-1}\right]  $. Therefore,%
\begin{align*}
\mathbb{E}\left[  g_Z\right]   =\mathbb{E}\left[  Z^{2}\right]   &  =\frac{\gamma}{1-\alpha}\\
2\mathbb{E}\left[  Zg_Z\right]   =\mathbb{E}\left[  Z^{3}\right]   &  =\frac{2\beta\gamma}{\left(
1-\alpha\right)  \left(  1-2\alpha\right)  }\\
3\mathbb{E}\left[  Z^{2}g_Z\right]   =\mathbb{E}\left[  Z^{4}\right]   &  =\frac{6\beta^{2}\gamma+\left(
1-2\alpha\right)  3\gamma^{2}}{\left(  1-\alpha\right)  \left(  1-2\alpha
\right)  \left(  1-3\alpha\right)  }%
\end{align*}
and
\begin{align}
\mathbb{E}\left[  g_{Z}^{2}\right]   &  =\frac{\beta^{2}\gamma\left(
1-\alpha\right)  +\gamma^{2}\left(  1-2\alpha\right)  ^{2}}{\left(
1-\alpha\right)  \left(  1-2\alpha\right)  \left(  1-3\alpha\right)
}\label{eqn_2ndgz}\\
\operatorname*{Var}g_{Z}  &  =\mathbb{E}\left[  g_{\ast}^{2}\left(  Z\right)
\right]  -\left(  \mathbb{E}\left[  g_{\ast}\left(  Z\right)  \right]
\right)  ^{2}=\frac{\beta^{2}\gamma\left(  1-\alpha\right)  ^{2}+2\alpha
^{2}\gamma^{2}\left(  1-2\alpha\right)  }{\left(  1-2\alpha\right)  \left(
1-3\alpha\right)  \left(  1-\alpha\right)  ^{2}}\text{.} \label{eqn_vargz}%
\end{align}

\begin{corollary}
\label{cor_Pearson}Suppose $Z$ is a Pearson random variable and for the
sequence $\left\{  X_{n}\right\}  $, $\operatorname*{Var}X_{n}=\mathbb{E}%
[X_{n}^{2}]=\mathbb{E}[g_{X_n}]\rightarrow\frac{\gamma}{1-\alpha}$. The following are sufficient
conditions so that $X_{n}\rightarrow Z$ in distribution.

\begin{enumerate}
\item When $Z$ is Normal ($\alpha=\beta=0$), $\operatorname*{Var}g_{X_{n}%
}\rightarrow0$.

\item When $Z$ is Gamma ($\alpha=0$), $\operatorname*{Var}g_{X_{n}}\rightarrow\beta
^{2}\gamma$ and
\begin{itemize}

\item under Assumption A, $\mathbb{E}\left[  X_{n}g_{X_n}\right]\rightarrow\beta\gamma$.

\item under Assumption A$^{\prime}$, $2\mathbb{E}\left[  X_{n}g_{X_n}\right]=\mathbb{E}\left[  X_{n}^{3}\right]\rightarrow2\beta\gamma$.

\end{itemize}

\item In the general case where $\alpha\neq0$, $\operatorname*{Var}g_{X_{n}}^{2}\rightarrow\frac{\beta^{2}%
\gamma\left(  1-\alpha\right)  ^{2}+2\alpha^{2}\gamma^{2}\left(
1-2\alpha\right)  }{\left(  1-2\alpha\right)  \left(  1-3\alpha\right)
\left(  1-\alpha\right)  ^{2}}$ and
\begin{itemize}

\item under Assumption A, \\ $2\mathbb{E}\left[
X_{n}g_{X_n}\right],\mathbb{E}\left[
X_{n}^{3}\right]  \rightarrow\frac{2\beta\gamma}{\left(  1-\alpha\right)
\left(  1-2\alpha\right)  }$, and $3\mathbb{E}\left[  X_{n}^{2}g_{X_n}\right],\mathbb{E}\left[  X_{n}^{4}\right]
\rightarrow\frac{6\beta^{2}\gamma+\left(  1-2\alpha\right)  3\gamma^{2}%
}{\left(  1-\alpha\right)  \left(  1-2\alpha\right)  \left(  1-3\alpha\right)
}$.

\item under Assumption A$^{\prime}$, \\ $2\mathbb{E}\left[
X_{n}g_{X_n}\right]=\mathbb{E}\left[
X_{n}^{3}\right]  \rightarrow\frac{2\beta\gamma}{\left(  1-\alpha\right)
\left(  1-2\alpha\right)  }$, and $3\mathbb{E}\left[  X_{n}^{2}g_{X_n}\right]=\mathbb{E}\left[  X_{n}^{4}\right]
\rightarrow\frac{6\beta^{2}\gamma+\left(  1-2\alpha\right)  3\gamma^{2}%
}{\left(  1-\alpha\right)  \left(  1-2\alpha\right)  \left(  1-3\alpha\right)
}$.

\end{itemize}

\end{enumerate}
\end{corollary}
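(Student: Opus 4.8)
The plan is to deduce all three cases from the sequential criteria already in hand: Corollary~\ref{cor_boundsequence}, together with the Remark permitting its condition \textbf{3} to be replaced by $\operatorname*{Var}g_{X_{n}}\to\operatorname*{Var}g_{Z}$, and Corollary~\ref{cor_gstarpoly}, which rewrites conditions \textbf{1} and \textbf{2} as convergence of ordinary moments (and, under Assumption A, of the mixed moments $\mathbb{E}[X_{n}^{k}g_{X_{n}}]$) once $g_{\ast}$ is a polynomial of degree $m$. First I would observe that the normalization hypothesis $\operatorname*{Var}X_{n}=\mathbb{E}[X_{n}^{2}]=\mathbb{E}[g_{X_{n}}]\to\gamma/(1-\alpha)$ is precisely $\mathbb{E}[X_{n}^{2}]\to\mathbb{E}[Z^{2}]$, using $\mathbb{E}[Z^{2}]=\gamma/(1-\alpha)$ from the moment computations preceding the statement; this legitimizes the variance form of condition \textbf{3} and, since $\mathbb{E}[X_{n}]=0=\mathbb{E}[Z]$, settles the $k=0$ and $k=1$ moments in every case.

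Next I would substitute the explicit functionals of $Z$ obtained from the Pearson recursion $\mathbb{E}[Z^{r+1}]=\frac{r\beta}{1-r\alpha}\mathbb{E}[Z^{r}]+\frac{r\gamma}{1-r\alpha}\mathbb{E}[Z^{r-1}]$, namely $\mathbb{E}[Z^{3}]=\frac{2\beta\gamma}{(1-\alpha)(1-2\alpha)}$, $\mathbb{E}[Z^{4}]=\frac{6\beta^{2}\gamma+3(1-2\alpha)\gamma^{2}}{(1-\alpha)(1-2\alpha)(1-3\alpha)}$, and $\operatorname*{Var}g_{Z}$ in (\ref{eqn_vargz}). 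Applying the moments formula (Proposition~\ref{lem_expec}) to $Z$ in the forms $\mathbb{E}[Z^{3}]=2\mathbb{E}[Zg_{Z}]$ and $\mathbb{E}[Z^{4}]=3\mathbb{E}[Z^{2}g_{Z}]$ pins down the target values of the mixed moments $\mathbb{E}[Z^{k}g_{Z}]$ entering condition \textbf{2}; this is exactly why the two limits listed on each line of the statement agree.

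The three cases then become bookkeeping indexed by $m=\deg g_{\ast}$. For $Z$ Normal ($m=0$) condition \textbf{1} is vacuous and condition \textbf{2} collapses to the $k=0$ term already controlled by the normalization, leaving only condition \textbf{3}, $\operatorname*{Var}g_{X_{n}}\to\operatorname*{Var}g_{Z}=0$, in agreement with (\ref{eqn_distnormal}). For $Z$ Gamma ($\alpha=0$, $m=1$) condition \textbf{1} requires matching of $\mathbb{E}[X_{n}^{k}]$ for $k\le2$ (already granted), condition \textbf{2} requires $\mathbb{E}[X_{n}g_{X_{n}}]\to\mathbb{E}[Zg_{Z}]=\beta\gamma$, and condition \textbf{3} requires $\operatorname*{Var}g_{X_{n}}\to\beta^{2}\gamma$ (the value of (\ref{eqn_vargz}) at $\alpha=0$). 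For the general case $\alpha\neq0$ ($m=2$) condition \textbf{1} needs the limits of $\mathbb{E}[X_{n}^{3}]$ and $\mathbb{E}[X_{n}^{4}]$, condition \textbf{2} needs those of $\mathbb{E}[X_{n}g_{X_{n}}]$ and $\mathbb{E}[X_{n}^{2}g_{X_{n}}]$, and condition \textbf{3} is again (\ref{eqn_vargz}); substituting the $Z$-moments above reproduces exactly the displayed limits.

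The one genuinely delicate point, which I would write out most carefully, is the split between Assumptions A and A$^{\prime}$. The identity $\mathbb{E}[X_{n}^{r+1}]=r\,\mathbb{E}[X_{n}^{r-1}g_{X_{n}}]$ for $r\ge2$ follows from the integration-by-parts formula (\ref{eqn_integpartsW}) applied to $f(y)=y^{r}$, whose derivative is unbounded, so it is available only when $X_{n}$ has a density, i.e. under A$^{\prime}$. This is precisely why under A$^{\prime}$ the pure and mixed moments are tied by genuine equalities ($2\mathbb{E}[X_{n}g_{X_{n}}]=\mathbb{E}[X_{n}^{3}]$ and $3\mathbb{E}[X_{n}^{2}g_{X_{n}}]=\mathbb{E}[X_{n}^{4}]$), so that a single moment condition subsumes both of conditions \textbf{1} and \textbf{2}, whereas under A the two convergences must be imposed separately even though the common target value is forced by the corresponding identity for $Z$, which always possesses a density. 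I would therefore invoke Proposition~\ref{lem_expec} for $X_{n}$ only under A$^{\prime}$, keeping the regularity hypotheses of each integration-by-parts step honest.
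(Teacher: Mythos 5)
Your proposal is correct and takes essentially the same route as the paper, whose entire proof is the single line ``Apply Corollary \ref{cor_gstarpoly} directly'': the ingredients you assemble --- Corollary \ref{cor_boundsequence} together with the Remark replacing condition \textbf{3} by $\operatorname*{Var}g_{X_{n}}\rightarrow\operatorname*{Var}g_{Z}$ (legitimized by the normalization hypothesis), the Pearson recursion giving $\mathbb{E}[Z^{3}]$, $\mathbb{E}[Z^{4}]$ and (\ref{eqn_vargz}), and the Assumption A versus Assumption A$^{\prime}$ dichotomy built into Corollary \ref{cor_gstarpoly} --- are precisely what that one-line proof invokes. Your careful account of why the pure and mixed moment conditions merge under A$^{\prime}$ (via Proposition \ref{lem_expec}) but must be imposed separately under A makes explicit what the paper leaves implicit.
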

\begin{proof}
Apply Corollary \ref{cor_gstarpoly} directly.
\end{proof}

\bigskip

The first statement is the version for sequences of Corollary 3.4 in
\cite{NV}. Alternatively, we could replace $\operatorname*{Var}g_{X_{n}%
}\rightarrow0$ by $\mathbb{E}\left[  g_{X_{n}}^{2}\right]  \rightarrow
\gamma^{2}$. For the Gamma convergence, we can replace $\operatorname*{Var}%
g_{X_{n}}\rightarrow\beta^{2}\gamma$ by $\mathbb{E}\left[  g_{X_{n}}%
^{2}\right]  \rightarrow\beta^{2}\gamma+\gamma^{2}$. When $\alpha\neq0$, we
can work with (\ref{eqn_2ndgz}) instead of (\ref{eqn_vargz}) so the statement
will be in terms of $\mathbb{E}\left[  g_{X_{n}}^{2}\right]  \rightarrow
\frac{\beta^{2}\gamma\left(  1-\alpha\right)  +\gamma^{2}\left(
1-2\alpha\right)  ^{2}}{\left(  1-\alpha\right)  \left(  1-2\alpha\right)
\left(  1-3\alpha\right)  }$.

\bigskip

The next result follows from Corollary \ref{cor_Lsequence}.

\begin{corollary}
\label{cor_LsequenceP}Suppose $Z$ is a Pearson random variable. $X_{n}\rightarrow Z$ in distribution if $\ g_{X_{n}}-\alpha X_{n}^{2}-\beta X_{n}\rightarrow\gamma$ in $L^{1}\left(  \Omega\right)  $.
\end{corollary}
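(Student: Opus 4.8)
The plan is to reduce everything to Corollary~\ref{cor_Lsequence}, which already yields $X_n \to Z$ in distribution once we know that $g_{\ast}(X_n) - g_{X_n} \to 0$ in $L^1(\Omega)$. So the entire task is to recognize that the hypothesis of the present corollary is nothing but a rewriting of that condition. First I would invoke the characterization of the Pearson family recalled at the start of subsection~\ref{ssec_gpoly}: membership of $Z$ means precisely that $g_{\ast}$ is a polynomial of degree at most two on the support of $Z$, say $g_{\ast}(y) = \alpha y^{2} + \beta y + \gamma$ for $y \in (l,u)$.

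Substituting $X_n$ into this expression gives $g_{\ast}(X_n) = \alpha X_n^{2} + \beta X_n + \gamma$, so that
\[
g_{\ast}(X_n) - g_{X_n} = \alpha X_n^{2} + \beta X_n + \gamma - g_{X_n} = -\bigl(g_{X_n} - \alpha X_n^{2} - \beta X_n - \gamma\bigr).
\]
Taking absolute values and expectations, $\mathbb{E}\,|g_{\ast}(X_n) - g_{X_n}| = \mathbb{E}\,|g_{X_n} - \alpha X_n^{2} - \beta X_n - \gamma|$, so the hypothesis $g_{X_n} - \alpha X_n^{2} - \beta X_n \to \gamma$ in $L^{1}(\Omega)$ is literally the statement that $g_{\ast}(X_n) - g_{X_n} \to 0$ in $L^{1}(\Omega)$. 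Corollary~\ref{cor_Lsequence} then delivers the conclusion immediately.

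The one place requiring care is that the identity $g_{\ast}(y) = \alpha y^{2} + \beta y + \gamma$ is valid only on the support $(l,u)$, whereas $g_{\ast}$ vanishes outside it; thus $g_{\ast}(X_n)$ and the polynomial $\alpha X_n^{2} + \beta X_n + \gamma$ could in principle differ on the event $\{X_n \notin (l,u)\}$. For the unbounded-support members of the family (Normal, Student's T, Pearson Type IV) that event is empty and there is nothing to check; in the bounded-support cases the hypothesis is to be read with $g_{\ast}$ replaced by its polynomial extension. This is the only substantive subtlety — otherwise the proof is a one-line substitution into Corollary~\ref{cor_Lsequence} — so I do not anticipate any genuine obstacle.
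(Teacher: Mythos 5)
Your proposal is correct and is essentially the paper's own proof: the paper disposes of this corollary with the single line ``The next result follows from Corollary \ref{cor_Lsequence},'' which is precisely the substitution $g_{\ast}(X_n)=\alpha X_n^{2}+\beta X_n+\gamma$ you spell out. Your closing remark about $g_{\ast}$ vanishing outside the support $(l,u)$ is a genuine subtlety that the paper silently ignores (note that for cases like Gamma or Beta it can be closed rigorously, since $g_{X_n}\geq 0$ a.s.\ while the Pearson polynomial is nonpositive off the support, so $\mathbb{E}\bigl[\,\lvert \alpha X_n^{2}+\beta X_n+\gamma\rvert\,\mathbf{1}_{\{X_n\notin(l,u)\}}\bigr]$ is dominated by the hypothesized $L^{1}$ quantity), so flagging it puts your argument at, if anything, a higher level of care than the paper's.
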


\subsection{Convergence in a fixed Wiener chaos}

When $X$ is inside a fixed Wiener chaos so $X=I_{q}\left(  f\right)  $, we
have more structure available. For example, $\left\langle DX,-DL^{-1}%
X\right\rangle _\mathfrak{H}=\frac{1}{q}\left\|  DX\right\|  _\mathfrak{H}^{2}$. Therefore, if
$Z\overset{\text{Law}}{=}\mathcal{N}\left(  0,\sigma^{2}\right)  $ and
$\mathbb{E}\left[  \left(  I_{q}\left(  f\right)  \right)  ^{2}\right]
=\sigma^{2}$, (\ref{eqn_distnormal}) gives us the bound
\[
d_{\mathcal{H}}\left(  X,Z\right)  \leq k\sqrt{\operatorname*{Var}g_{X}}\leq k\sqrt
{\operatorname*{Var}\left(  \frac{1}{q}\left\|  DX\right\|  _\mathfrak{H}^{2}\right)
}\text{.}%
\]
One may then use bounds like
\begin{equation}
\operatorname*{Var}\left(  \frac{1}{q}\left\|  DX\right\|  _\mathfrak{H}^{2}\right)
\overset{\text{(a)}}{=}\frac{1}{q^{2}}\mathbb{E}\left[  \left(  \left\|
DX\right\|  _\mathfrak{H}^{2}-q\sigma^{2}\right)  ^{2}\right]  \overset{\text{(b)}%
}{\leq}\frac{q-1}{3q}\left(  \mathbb{E}\left[  X^{4}\right]  -3\sigma
^{4}\right)  \label{eqn_boundnormal}%
\end{equation}
to further cap the distance. Equality (a) follows from $\mathbb{E}\left[
\frac{1}{q}\left\|  DX\right\|  _\mathfrak{H}\right]  =\mathbb{E}\left[  g_{X}\right]
=\sigma^{2}$ and inequality (b) from Lemma 3.5 in \cite{NPsurv}. These are
quite important and known results which yield CLTs for
functionals on a fixed Wiener chaos. For instance, if we have a sequence
$\left\{  X_{n}\right\}  =\left\{  I_{q}\left(  f_{n}\right)  \right\}  $
where $\mathbb{E}\left[  \left(  I_{q}\left(  f_{n}\right)  \right)
^{2}\right]  \rightarrow\sigma^{2}$, then the following conditions are equivalent:
\begin{enumerate}
\item $X_{n}\rightarrow Z$ in distribution;

\item $\mathbb{E}\left[  X_{n}^{4}\right]  \rightarrow3\sigma^{4}$;

\item $\left\|  f_{n}\otimes_{r}f_{n}\right\|  _{H^{\otimes\left(
2q-2r\right)  }}\rightarrow0$ for all $r=1,\ldots,q-1$;

\item $\left\|  DX_{n}\right\|  _\mathfrak{H}^{2}\rightarrow q\sigma^{2}$ in
$L^{2}\left(  \Omega\right)  $;

\item $\left\|  D^{2}X_{n}\otimes_{1}D^{2}X_{n}\right\|  _{\mathfrak{H}^{\otimes2}}%
^{2}\rightarrow0$ in $L^{2}\left(  \Omega\right)  $.
\end{enumerate}

\bigskip

See \cite{NuP} for $\left(  1\right)  \Longleftrightarrow\left(  2\right)
\Longleftrightarrow\left(  3\right)  $, \cite{NuO} for $\left(  1\right)
\Longleftrightarrow\left(  4\right)  $, and \cite{NPR} for $\left(  1\right)
\Longleftrightarrow\left(  5\right)  $. These in some sense highlight the
tightness of inequality (\ref{eqn_inequality2}) with the help of bounds like
(\ref{eqn_Poincare}) and (\ref{eqn_boundnormal}).

\begin{corollary}
\label{cor_Xchaos}If $X_{n}=I_{q}\left(  f_{n}\right)  $ with $q\geq1$, then
condition \textbf{3} in Corollary \ref{cor_boundsequence} can be replaced by
$\mathbb{E}\left[  \left\|  DX_{n}\right\|  ^{4}_\mathfrak{H}\right]  \rightarrow
q^{2}\mathbb{E}\left[  g_{\ast}^{2}\left(  Z\right)  \right]  $.
\end{corollary}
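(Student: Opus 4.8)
The statement means that, for $X_n=I_q(f_n)$, we may keep conditions \textbf{1} and \textbf{2} of Corollary \ref{cor_boundsequence} but swap condition \textbf{3} for the requirement $\mathbb{E}[\|DX_n\|_\mathfrak{H}^4]\to q^2\mathbb{E}[g_\ast^2(Z)]$. So the plan is to prove that, \emph{given} conditions \textbf{1} and \textbf{2}, this new hypothesis forces $\mathbb{E}[g_{X_n}^2]\to\mathbb{E}[g_Z^2]$ (the original condition \textbf{3}); Corollary \ref{cor_boundsequence} then applies verbatim and gives $X_n\to Z$ in distribution. First I would record the chaos identity already noted at the start of this subsection: since $X_n=I_q(f_n)$ lives in a single chaos, $L^{-1}X_n=-\tfrac1q X_n$, so $-DL^{-1}X_n=\tfrac1q DX_n$ and $\langle DX_n,-DL^{-1}X_n\rangle_\mathfrak{H}=\tfrac1q\|DX_n\|_\mathfrak{H}^2$. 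Conditioning on $X_n$ yields
\[
g_{X_n}=\frac1q\,\mathbb{E}\bigl[\|DX_n\|_\mathfrak{H}^2\,\big|\,X_n\bigr].
\]

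For the upper bound I would invoke conditional Jensen's inequality (conditional expectation is an $L^2$ contraction), which gives
\[
\mathbb{E}[g_{X_n}^2]=\frac{1}{q^2}\,\mathbb{E}\Bigl[\bigl(\mathbb{E}[\|DX_n\|_\mathfrak{H}^2\mid X_n]\bigr)^2\Bigr]\leq\frac{1}{q^2}\,\mathbb{E}[\|DX_n\|_\mathfrak{H}^4].
\]
Together with the new hypothesis and the identity $\mathbb{E}[g_\ast^2(Z)]=\mathbb{E}[g_Z^2]$, this produces $\varlimsup_n\mathbb{E}[g_{X_n}^2]\leq\mathbb{E}[g_Z^2]$. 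Finiteness of $\mathbb{E}[\|DX_n\|_\mathfrak{H}^4]$ is automatic from hypercontractivity on a fixed chaos, so no integrability caveat is needed.

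The main obstacle is the matching lower bound, since Jensen only controls $\mathbb{E}[g_{X_n}^2]$ from above and loses information under the conditioning; this is exactly where conditions \textbf{1} and \textbf{2} must enter. I would start from the nonnegativity $\mathbb{E}[(g_\ast(X_n)-g_{X_n})^2]\geq 0$, which rearranges to $\mathbb{E}[g_{X_n}^2]\geq 2\mathbb{E}[g_\ast(X_n)g_{X_n}]-\mathbb{E}[g_\ast(X_n)^2]$. Taking $\varliminf_n$ and using condition \textbf{1} ($\mathbb{E}[g_\ast(X_n)^2]\to\mathbb{E}[g_\ast(Z)^2]$) and condition \textbf{2} (which, under Assumption A directly, and under Assumption A$'$ via (\ref{eqn_biggstar}), gives $\mathbb{E}[g_\ast(X_n)g_{X_n}]\to\mathbb{E}[g_\ast(Z)g_Z]$), together with $g_\ast(Z)=g_Z$ so that $\mathbb{E}[g_\ast(Z)g_Z]=\mathbb{E}[g_\ast(Z)^2]=\mathbb{E}[g_Z^2]$, yields
\[
\varliminf_n\mathbb{E}[g_{X_n}^2]\geq 2\mathbb{E}[g_Z^2]-\mathbb{E}[g_Z^2]=\mathbb{E}[g_Z^2].
\]
Squeezing this against the Jensen upper bound gives $\mathbb{E}[g_{X_n}^2]\to\mathbb{E}[g_Z^2]$, i.e.\ condition \textbf{3}, and the conclusion follows from Corollary \ref{cor_boundsequence}. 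The delicate point is precisely that the replacement is \emph{not} self-contained: the new fourth-moment-type hypothesis only caps $\mathbb{E}[g_{X_n}^2]$ from above, and one genuinely needs the two surviving hypotheses to supply the reverse estimate and pin down the limit.
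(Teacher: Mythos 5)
Your proof is correct, and it is organized a little differently from the paper's (one-line) argument. The paper's proof invokes the identity $\langle DX_{n},-DL^{-1}X_{n}\rangle_{\mathfrak{H}}=\tfrac{1}{q}\| DX_{n}\|_{\mathfrak{H}}^{2}$ together with $\mathbb{E}[g_{Z}^{2}]=\mathbb{E}[g_{\ast}^{2}(Z)]$, the intended expansion being to rerun the quadratic bound behind Theorem \ref{thm_boundnew}/Corollary \ref{cor_boundsequence} with the \emph{unconditioned} quantity $\tfrac{1}{q}\|DX_{n}\|_{\mathfrak{H}}^{2}$ in place of $g_{X_{n}}$: one has $\mathbb{E}\left|g_{\ast}(X_{n})-g_{X_{n}}\right|\leq\mathbb{E}\left|g_{\ast}(X_{n})-\tfrac{1}{q}\|DX_{n}\|_{\mathfrak{H}}^{2}\right|$, the cross term is unchanged by the tower property ($\tfrac{1}{q}\mathbb{E}[g_{\ast}(X_{n})\|DX_{n}\|_{\mathfrak{H}}^{2}]=\mathbb{E}[g_{\ast}(X_{n})g_{X_{n}}]$), and the square term becomes $\tfrac{1}{q^{2}}\mathbb{E}[\|DX_{n}\|_{\mathfrak{H}}^{4}]$, so conditions \textbf{1}, \textbf{2} and the fourth-moment condition force $\mathbb{E}\bigl[\bigl(g_{\ast}(X_{n})-\tfrac{1}{q}\|DX_{n}\|_{\mathfrak{H}}^{2}\bigr)^{2}\bigr]\rightarrow0$ and hence $d_{\mathcal{H}}(X_{n},Z)\rightarrow0$ directly. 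You instead keep Corollary \ref{cor_boundsequence} as a black box and show that the new hypothesis, \emph{together with} conditions \textbf{1} and \textbf{2}, recovers the original condition \textbf{3}: conditional Jensen gives $\mathbb{E}[g_{X_{n}}^{2}]\leq\tfrac{1}{q^{2}}\mathbb{E}[\|DX_{n}\|_{\mathfrak{H}}^{4}]$ from above, and positivity of $\mathbb{E}[(g_{\ast}(X_{n})-g_{X_{n}})^{2}]$ plus conditions \textbf{1}--\textbf{2} (with (\ref{eqn_biggstar}) under Assumption A$^{\prime}$) gives the matching lower bound. Both routes rest on exactly the two identities the paper cites; yours buys a literal justification of the word ``replaced'' (the three hypotheses of Corollary \ref{cor_boundsequence} genuinely hold), at the cost of the extra squeeze, while the paper's substitution argument is shorter and yields the slightly stronger $L^{2}$ statement above. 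Your closing remark that the fourth-moment condition alone only caps $\mathbb{E}[g_{X_{n}}^{2}]$ from above, so that conditions \textbf{1} and \textbf{2} are indispensable for pinning down the limit, is accurate and is the right thing to emphasize.
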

\begin{proof}
This is a direct consequence of $\left\langle DX_{n},-DL^{-1}X_{n}%
\right\rangle _\mathfrak{H}=\frac{1}{q}\left\|  DX_{n}\right\|  ^{2}_\mathfrak{H}$ and
$\mathbb{E}\left[  g_{Z}^{2}\right]  =\mathbb{E}\left[  g_{\ast}^{2}\left(
Z\right)  \right]  $.
\end{proof}

\bigskip

From this and Corollary \ref{cor_LsequenceP}, we have the following result for
the convergence in a fixed Wiener chaos to a Pearson random variable.

\begin{corollary}
Let $Z$ be Pearson with $g_{\ast}\left(  z\right)  =\alpha z^{2}+\beta
z+\gamma$ in its support. Fix $q\geq2$. Suppose $X_{n}=I_{q}\left(
f_{n}\right)  $ and $\mathbb{E}\left[  X_{n}^{2}\right]  \rightarrow
\frac{\gamma}{1-\alpha}$. If $\left\|  DX_{n}\right\|  ^{2}_\mathfrak{H}-q\alpha X_{n}%
^{2}-q\beta X_{n}\rightarrow q\gamma$ in $L^{1}\left(  \Omega\right)  $, then $X_{n}\rightarrow Z$ in distribution.
\end{corollary}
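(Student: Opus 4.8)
The plan is to reduce the statement to Corollary \ref{cor_LsequenceP}, which yields $X_n \to Z$ in distribution as soon as $g_{X_n} - \alpha X_n^2 - \beta X_n \to \gamma$ in $L^1(\Omega)$. Thus the whole task is to translate the hypothesis, which is stated in terms of the raw Malliavin norm $\|DX_n\|^2_\mathfrak{H}$, into the corresponding statement for the conditioned object $g_{X_n}$.

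First I would invoke the fixed-chaos identity. Since $X_n = I_q(f_n)$ lives in a single chaos, $-DL^{-1}X_n = \frac{1}{q}DX_n$, so that $\langle DX_n, -DL^{-1}X_n\rangle_\mathfrak{H} = \frac{1}{q}\|DX_n\|^2_\mathfrak{H}$ and hence $g_{X_n} = \mathbb{E}[\frac{1}{q}\|DX_n\|^2_\mathfrak{H} \mid X_n]$. Dividing the hypothesis by $q$ gives $\frac{1}{q}\|DX_n\|^2_\mathfrak{H} - \alpha X_n^2 - \beta X_n \to \gamma$ in $L^1(\Omega)$.

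Next, because $\alpha X_n^2 + \beta X_n + \gamma$ is $\sigma(X_n)$-measurable, I would pull it inside the conditional expectation to write $g_{X_n} - \alpha X_n^2 - \beta X_n - \gamma = \mathbb{E}[\frac{1}{q}\|DX_n\|^2_\mathfrak{H} - \alpha X_n^2 - \beta X_n - \gamma \mid X_n]$. Since conditional expectation is an $L^1$-contraction (conditional Jensen applied to the absolute value, followed by the tower property), it follows that $\mathbb{E}|g_{X_n} - \alpha X_n^2 - \beta X_n - \gamma| \leq \frac{1}{q}\mathbb{E}|\|DX_n\|^2_\mathfrak{H} - q\alpha X_n^2 - q\beta X_n - q\gamma|$, and the right-hand side tends to $0$ by hypothesis. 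Hence $g_{X_n} - \alpha X_n^2 - \beta X_n \to \gamma$ in $L^1(\Omega)$, and Corollary \ref{cor_LsequenceP} closes the argument.

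The only substantive step is the passage from $\frac{1}{q}\|DX_n\|^2_\mathfrak{H}$ to its conditional expectation $g_{X_n}$: one must notice that conditioning on $X_n$ cannot increase the $L^1$ distance to the target polynomial, precisely because that polynomial is already a function of $X_n$. I expect no genuine obstacle here. I would also note that the moment hypothesis $\mathbb{E}[X_n^2] \to \gamma/(1-\alpha)$ is consistent with (and in fact implied by) the $L^1$ hypothesis: taking expectations and using $\mathbb{E}[g_{X_n}] = \mathbb{E}[X_n^2]$ together with $\mathbb{E}[X_n] = 0$ gives $(1-\alpha)\mathbb{E}[X_n^2] \to \gamma$, so it merely records the matching of variances and plays no independent role in the proof.
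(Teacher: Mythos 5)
Your proposal is correct and takes essentially the same route as the paper: the paper obtains this corollary by combining the fixed-chaos identity $\langle DX_n,-DL^{-1}X_n\rangle_\mathfrak{H}=\frac{1}{q}\left\|DX_n\right\|^2_\mathfrak{H}$ (noted in Corollary \ref{cor_Xchaos}) with Corollary \ref{cor_LsequenceP}, which is exactly your reduction. The conditional-Jensen contraction step and your observation that the hypothesis $\mathbb{E}[X_n^2]\rightarrow\gamma/(1-\alpha)$ already follows from the $L^1$ hypothesis (via $\mathbb{E}[g_{X_n}]=\mathbb{E}[X_n^2]$ and $\mathbb{E}[X_n]=0$) are details the paper leaves implicit, and both are correct.
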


\begin{remark}
Special cases of the above corollary are known results.

\begin{itemize}
\item Let $Z$ be Normal with variance $1$, i.e. $g_{\ast}\left(  z\right)
=1$. Suppose $\mathbb{E}\left[  X_{n}^{2}\right]  \rightarrow1$. Then $X_{n}\rightarrow
Z$ in distribution if $\left\|  DX_{n}\right\|_\mathfrak{H}  ^{2}\rightarrow q$ in
$L^{2}\left(  \Omega\right)  $. See \cite{NuO}.

\item Let $Z$ be Gamma with $g_{\ast}(z)=\left(  2z+2v\right)  _{+}$, i.e.
$\beta=2$ and $\gamma=2v$, where the parameters are chosen for consistency
with the discussion in \cite{NPnonclt}. Suppose $\mathbb{E}\left[  X_{n}%
^{2}\right]  \rightarrow2v$. Then $X_{n}\rightarrow Z$ in distribution if
$\left\|  DX_{n}\right\|_\mathfrak{H}  ^{2}-2qX_{n}\rightarrow2qv$ in $L^{2}\left(
\Omega\right)  $.
\end{itemize}
\end{remark}

\bigskip

The result in the first item of this remark is known as the Nualart$-$Ortiz-Latorre criterion. In \cite{VT}, the authors used it to prove that
$$C\sqrt{N}\ln(N)\left(\widehat{H}_N-H\right)\xrightarrow[N\to\infty]{}\mathcal{N}(0,1)$$
where $\widehat{H}_N$ is an estimator of the Hurst parameter $H$ for fBm when $H\in\bigl	(\frac{1}{2},\frac{1}{3}\bigr)$ (see \cite{VT} for details).

\subsection{Bilinear functionals of Gaussian subordinated fields}

Let $X_{t}$ be a centered Gaussian stationary process with covariance function
$C\left(  t\right)  =\mathbb{E}\left[  X_{0}X_{t}\right]  =\mathbb{E}\left[
X_{s}X_{s+t}\right]  $. Let $f:\mathbb{R}\rightarrow\mathbb{R}$ be a $\mathcal{C}^{2}$ non-constant
function such that with $Z\sim\mathcal{N}\left(  0,C\left(  0\right)  \right)
$, $\mathbb{E}\left[  \left|  f\left(  Z\right)  \right|  \right]  <\infty$
and $\mathbb{E}\left[  \left|  f^{\prime\prime}\left(  Z\right)  \right|
^{4}\right]  <\infty$. Write $\mu_{f}=\mathbb{E}\left[  f\left(  Z\right)  \right]  $. It was proved in \cite{NPR} and \cite{Viquez} (under mild conditions) that
$$H_T:=\frac{1}{V(T)}\int_{[0,T]}\left(  f\left(X_{s}\right)  -\mu_{f}\right)ds\xrightarrow[T\to\infty]{}\mathcal{N}(0,\Sigma^2)$$
where $V(T)$ is a normalization function with specific properties. In particular, for the case of the increments of fBm ($X_t=B^H_{t+1}-B^H_t$) with Hurst parameter $H\in(1/2,1)$, we have $V(T)=T^H$ and $\Sigma^{2}=2\bigl(\mathbb{E}[Zf(Z)]\bigr)^{2}$. As an ilustration of how to employ this tool, and to emphasize the advantage of using inequality (\ref{eqn_inequality3}) over (\ref{eqn_inequality1}), we will prove that $F_T:=\left(\frac{H_T}{\Sigma}\right)^2-\mathbb{E}\left[\left(\frac{H_T}{\Sigma}\right)^2\right]$ converges to a (centered) chi-squared r.v. as $T\rightarrow\infty$, for the case of the increments of fBm when $H\in(1/2,1)$.

\bigskip

\noindent {\bf Preliminary computations and notation:}\\
Following the setup in \cite{Nualart} (Section 5.1.3), we can write $B^H_{t+1}-B^H_t=I_1\bigl(K_H(t,\cdot)\bigr)$ where $K_H(t,s)=c_H s^{\frac{1}{2}-H}\int_{s\vee t}^{t+1}(u-s)^{H-\frac{3}{2}}u^{H-\frac{1}{2}}du\mathbf{1}_{\left[  0,t+1\right]}$ and $c_H=\left[\frac{H(2H-1)}{\beta(2-2H,H-\frac{1}{2})}\right]^{1/2}$. The integral $I_1$ is with respect to a Wiener process $W$ generating the same filtration as $B^H$, with the two processes related by $B_t = \int_0^t K_H(t,s)dW_s$.

Let's make the simplifying assumption $C\left(  0\right)  =1$ so $Z$,
$X_{s}\sim\mathcal{N}\left(  0,1\right)  $. To simplify notation, we will write $\int_{T^{q}}$ for $\int_{\left[0,T\right]  ^{q}}$ and $\mathbf{K}_{t}$ for $K_H(t,\cdot)$. Also let $\mathfrak{H}=L^2([0,T])$. Then $DX_{t}=\mathbf{K}_{t}$ and $\mathbb{E}[X_sX_t]=\mathbb{E}\left[I_1(\mathbf{K}_{s})I_1(\mathbf{K}_{t})\right]=\left\langle \mathbf{K}_{s},\mathbf{K}_{t}\right\rangle _{\mathfrak{H}}=C\left(  \left|  s-t\right|\right)  =:C_{st}$. Define for $T>0$ and $s,t\in\left[0,T\right]  $ the functionals
\begin{align*}
F_{st} &  =\left(  f\left(  X_{s}\right)  -\mu_{f}\right)  \left(  f\left(
X_{t}\right)  -\mu_{f}\right)  \\
\widetilde{F}_{T} &  =\left(  \frac{H_T}{\Sigma}\right)  ^{2}=\frac{1}{\Sigma^{2}T^{2H}}%
\int_{T^{2}}\left(  f\left(  X_{s}\right)  -\mu_{f}\right)  \left(  f\left(
X_{t}\right)  -\mu_{f}\right)  dsdt=\frac{1}{\Sigma^{2}T^{2H}}\int_{T^{2}%
}F_{st}dsdt\\
F_{T} &  =\widetilde{F}_{T}-\mathbb{E}\left[  \widetilde{F}_{T}\right]  \text{.}%
\end{align*}
Let $\mathbf{s=}\left(  s_{1},\ldots,s_{P}\right)  \in\left[  0,T\right]
^{P}$. We will use $\epsilon(s_i,s_j)$ to denote a nonnegative integer exponent
indexed by a pair of variables from $\mathbf{s}$. Define
\[
L\left(  T\right)  =\frac{1}{T^{PH}}\int_{T^{N}}\prod_{s_{i}\neq s_{j}}\left|
C_{s_{i}s_{j}}^{\epsilon(s_i,s_j)}\right|  d\mathbf{s=}\frac
{1}{T^{PH}}\left\|  \prod_{s_{i}\neq s_{j}}C_{s_{i}s_{j}}^{\epsilon(s_i,s_j)}\right\|  _{1}%
\]
where inside the integral is the product of $Q=\binom{P}{2}$ factors. For example, if $P=4$ and
$\mathbf{s=}\left(  s,t,u,v\right)  $, then
\begin{equation}
L\left(  T\right)  =\frac{1}{T^{4H}}\int_{T^{4}}\left|  C_{st}^{\epsilon(s,t)}C_{su}^{\epsilon(s,u)}C_{sv}^{\epsilon(s,v)}%
C_{tu}^{\epsilon(t,u)}C_{tv}^{\epsilon(t,v)}C_{uv}^{\epsilon(u,v)}\right|  dsdtdudv\text{.}\label{ex_eqn_LT}%
\end{equation}
Recall that $C_{st}^{\epsilon(s,t)}=\bigl(C\left(
\left|  s-t\right|  \right)\bigr)^{\epsilon(s,t)}  $, so the integration in (\ref{ex_eqn_LT}) is
being done only on the subscripts and not on the superscripts (since these are
fixed exponents indexed only by the variables over which we're integrating).

\begin{proposition} 
\label{calculations}
With the previous notation,
\begin{enumerate}

\item For $q\geq1$, take $c_{q}q!=\mathbb{E}\left[  H_{q}\left(  Z\right) f\left(  Z\right)  \right]$ where  $H_q$ is the $q^{\text{th}}$ Hermite polynomial. Then,
$$f\left(  X_t\right)  =\mu_{f}+\sum_{q=1}^{\infty}c_{q}I_{q}\left( \mathbf{K}_{t}^{\otimes q} \right).$$

\item $F_{st}$ has Wiener chaos decomposition
\begin{align*}
F_{st}  &  =\sum_{n=2}^{\infty}\sum_{m=1}^{n-1}c_{m}c_{n-m}I_{n}\left(
\mathbf{K}_{s}^{\otimes m}\otimes\mathbf{K}_{t}^{\otimes\left(  n-m\right)
}\right)  +\sum_{n=0}^{\infty}\sum_{m=0}^{n}\sum_{r=1}^{\infty}d\left(
m+r,n-m+r,r\right)  I_{n}\left(  \mathbf{K}_{s}^{\otimes m}\otimes
\mathbf{K}_{t}^{\otimes\left(  n-m\right)  }\right)  C_{st}^{r}\\
&  =\sum_{n=0}^{\infty}\sum_{m=0}^{n}\sum_{r=0}^{\infty}e\left(  m,n,r\right)
d\left(  m+r,n-m+r,r\right)  I_{n}\left(  \mathbf{K}_{s}^{\otimes m}%
\otimes\mathbf{K}_{t}^{\otimes\left(  n-m\right)  }\right)  C_{st}^{r}%
\end{align*}
where $d\left(  k,j,r\right)  =c_{k}c_{j}r!\binom{k}{r}\binom{j}{r}$, and
\[
e\left(  m,n,r\right)  =\left\{
\begin{array}
[c]{cl}%
0 & \text{if }r=0\text{ and }m\in\left\{  0,n\right\}  \text{ }\\
1 & \text{otherwise}%
\end{array}
\right.  \text{.}%
\]

\item For $0\leq a$, $b\leq n$,%
\[
\left\langle \mathbf{K}_s^{\otimes a}\widetilde{\otimes
}\mathbf{K}_t^{\otimes\left(  n-a\right)  }%
,\mathbf{K}_u^{\otimes b}\widetilde{\otimes}\mathbf{K}%
_v^{\otimes\left(  n-b\right)  }\right\rangle
_{\mathfrak{H}^{\otimes n}}=\frac{1}{\binom{n}{a}\binom{n}{b}}\sum_{p}\binom
{n}{p,a-p,b-p,n-a-b+p}C_{su}^{p}C_{sv}^{a-p}C_{tu}^{b-p}C_{tv}^{n-a-b+p}%
\]
where the summation is taken over all $p$ for which $\max\left(
0,a+b-n\right)  \leq p\leq\min\left(  a,b\right)  $.

\item Fix an integer $P\geq2$. Let $S=\sum \epsilon\left(  s_{i},s_{j}\right)  $ be the sum of the
exponents in $L\left(  T\right)  $.
\begin{itemize}

\item If $S>P/2$, then $\lim_{T\rightarrow
\infty}L\left(  T\right)  =0$.

\item If $S=P/2$, then $\varlimsup_{T\rightarrow
\infty}L\left(  T\right)  <\infty$.

\end{itemize}
\end{enumerate}

\end{proposition}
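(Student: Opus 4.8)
The plan is to dispatch the four parts separately, since parts \textbf{1}--\textbf{3} are algebraic identities and only part \textbf{4} is analytic. For part \textbf{1}, I would use that $X_t=I_1(\mathbf{K}_t)$ with $\|\mathbf{K}_t\|_{\mathfrak{H}}^2=C(0)=1$, so the standard identity $H_q(I_1(h))=I_q(h^{\otimes q})$ for unit-norm $h$ gives $H_q(X_t)=I_q(\mathbf{K}_t^{\otimes q})$. Expanding the $L^2(\Omega)$ variable $f(X_t)$ in Hermite polynomials of the standard Gaussian $X_t$ and using orthogonality $\mathbb{E}[H_p(Z)H_q(Z)]=q!\,\delta_{pq}$ identifies the coefficients as $c_q q!=\mathbb{E}[H_q(Z)f(Z)]$, with the $q=0$ term equal to $\mu_f$. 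For part \textbf{2}, I would insert the part-\textbf{1} expansions of $f(X_s)-\mu_f$ and $f(X_t)-\mu_f$ into $F_{st}$ and multiply term by term with the product formula (\ref{eqn_productW}); the only contraction that occurs is $\mathbf{K}_s^{\otimes m}\otimes_r\mathbf{K}_t^{\otimes k}=\langle\mathbf{K}_s,\mathbf{K}_t\rangle_{\mathfrak{H}}^{\,r}\,\mathbf{K}_s^{\otimes(m-r)}\otimes\mathbf{K}_t^{\otimes(k-r)}=C_{st}^{\,r}\,\mathbf{K}_s^{\otimes(m-r)}\otimes\mathbf{K}_t^{\otimes(k-r)}$. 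Reindexing by the output chaos order $n=m+k-2r$ and the surviving powers $a=m-r$ and $n-a=k-r$ converts the coefficient $c_mc_k\,r!\binom{m}{r}\binom{k}{r}$ into $d(a+r,n-a+r,r)$, matching the stated sum; the constraint $m,k\ge1$ only bites in the slice $r=0$, where it excludes $a\in\{0,n\}$, and this lone exclusion is exactly what the factor $e(m,n,r)$ records, merging the two displayed sums into one.

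Part \textbf{3} is a symmetrization count. The key reduction is that pairing against a symmetric tensor is insensitive to symmetrizing the other argument, so one of the two tildes may be dropped. Writing the surviving symmetrization of $\mathbf{K}_s^{\otimes a}\otimes\mathbf{K}_t^{\otimes(n-a)}$ as the $\binom{n}{a}^{-1}$-weighted sum over the $\binom{n}{a}$ distinct placements of the $a$ copies of $\mathbf{K}_s$ among $n$ slots, the inner product against $\mathbf{K}_u^{\otimes b}\otimes\mathbf{K}_v^{\otimes(n-b)}$ factorizes slot by slot: slots among the first $b$ produce $C_{su}$ or $C_{tu}$, slots among the last $n-b$ produce $C_{sv}$ or $C_{tv}$. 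Indexing placements by $p$, the number of $\mathbf{K}_s$-slots falling in the first $b$ positions, gives the monomial $C_{su}^{p}C_{tu}^{b-p}C_{sv}^{a-p}C_{tv}^{n-a-b+p}$ with multiplicity $\binom{b}{p}\binom{n-b}{a-p}$ and range $\max(0,a+b-n)\le p\le\min(a,b)$. The elementary identity $\binom{b}{p}\binom{n-b}{a-p}=\binom{n}{b}^{-1}\binom{n}{p,a-p,b-p,n-a-b+p}$ then produces the claimed multinomial together with the remaining prefactor $\binom{n}{b}^{-1}$.

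Part \textbf{4} is the analytic core. I would first record the sharp envelope of the increment covariance: from the explicit $C(t)=\tfrac12(|t+1|^{2H}-2|t|^{2H}+|t-1|^{2H})$ one gets $C(t)\sim H(2H-1)t^{2H-2}$ at infinity, while $|C(t)|\le C(0)=1$, whence $|C(t)|\le\kappa(1+|t|)^{-\gamma}$ with $\gamma:=2-2H\in(0,1)$. This bounds the integrand by $\kappa^{S}\prod_{i<j}(1+|s_i-s_j|)^{-\gamma\epsilon(s_i,s_j)}$, reducing everything to the growth rate of $\int_{[0,T]^P}\prod_{i<j}(1+|s_i-s_j|)^{-\gamma\epsilon(s_i,s_j)}\,d\mathbf{s}$ measured against the normalization $T^{PH}=(T^{H})^{P}$, i.e.\ a budget of $T^{H}$ per variable. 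I would estimate the integral by eliminating the variables one at a time, using the uniform one-dimensional bound that $\int_0^T(1+|s-a|)^{-\theta}\,ds$ is $O(1)$ for $\theta>1$, $O(\log T)$ for $\theta=1$, and $O(T^{1-\theta})$ for $\theta<1$ (uniformly in $a$), with the exponent $\theta$ seen by $s_i$ being $\gamma$ times the total exponent on its not-yet-removed incident edges, and then tallying the resulting power of $T$.

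The main obstacle is this multidimensional power count, and in particular the borderline case $S=P/2$. The cleanest route observes that $2S$ equals the sum over variables of their incident exponent weight; in the integrals that arise here every variable is incident to at least one factor with positive exponent (the pairings are produced by the contractions of part \textbf{3}), so each incident weight is $\ge1$, and $S=P/2$ forces each to equal exactly $1$, i.e.\ a perfect-matching configuration. Then the integral factorizes into $P/2$ two-variable integrals, each $\asymp T^{2H}$, so $L(T)$ stays bounded; when $S>P/2$ the surplus incident weight yields strictly faster decay in at least one elimination, forcing $L(T)\to0$. The delicate points I would watch are that the one-at-a-time elimination is lossy once an incident exponent exceeds $1/\gamma$ (so the order of elimination must be chosen to peel leaves of the exponent graph and to respect its connected components), and that the critical one-dimensional integrals ($\theta=1$) contribute only logarithms that cannot upgrade the power of $T$ beyond $PH$; verifying these against the normalization in the equality case $S=P/2$ is where the real work lies.
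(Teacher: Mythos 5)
Your parts \textbf{1}--\textbf{3} are correct and follow essentially the paper's own proof: Hermite expansion combined with $H_q(I_1(h))=I_q(h^{\otimes q})$, then the product formula (\ref{eqn_productW}) together with the contraction identity $\mathbf{K}_s^{\otimes m}\otimes_r\mathbf{K}_t^{\otimes k}=C_{st}^r\,\mathbf{K}_s^{\otimes(m-r)}\otimes\mathbf{K}_t^{\otimes(k-r)}$ and a reindexation, with $e(m,n,r)$ recording exactly the excluded slice $r=0$, $m\in\{0,n\}$. For part \textbf{3} the paper counts pairs of terms coming from \emph{both} symmetrized tensors, while you drop one symmetrization (legitimate, since symmetrization is a self-adjoint projection) and count placements; the identity $\binom{b}{p}\binom{n-b}{a-p}=\binom{n}{b}^{-1}\binom{n}{p,a-p,b-p,n-a-b+p}$ makes the two counts coincide.

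Part \textbf{4} is where you take a genuinely different route, and yours is the more robust one. The paper's proof is three lines: generalized H\"older over all $Q=\binom{P}{2}$ pairs, plus the two-variable estimate $\int_{T^2}|C_{st}^{\epsilon}|\,ds\,dt=O\bigl(T^{2H}/T^{(2-2H)(\epsilon-1)}\bigr)$ cited from \cite{Viquez}, yielding the explicit rate $L(T)=O\bigl(T^{-(1-H)(2S-P)}\bigr)$. That estimate, however, can only hold when $(2-2H)\epsilon\le1$ (for larger $\epsilon$ the near-diagonal region alone forces $\int_{T^2}|C_{st}^{\epsilon}|\gtrsim T$), whereas H\"older is applied with the inflated exponents $Q\epsilon(s_i,s_j)\ge Q\ge6$, which leave that range for all but $H$ extremely close to $1$; this is precisely the ``lossy elimination'' phenomenon you flag. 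Your graph-theoretic power counting (decay envelope $|C(t)|\le\kappa(1+|t|)^{-(2-2H)}$, connected components, leaf/spanning-tree elimination, perfect matching at $S=P/2$) avoids the questionable estimate entirely. Equally important, you make explicit a hypothesis the proposition leaves implicit and which your proof rightly requires: every variable must be incident to at least one positive exponent. Without this covering property the literal statement is false --- take $P=4$, $\epsilon(s,t)=3$ and all other exponents zero, so $S=3>P/2$, yet $L(T)\ge cT^{3-4H}\to\infty$ for $H\in(1/2,3/4)$. The covering property does hold for every configuration the paper actually integrates (the correlations arise from pairing all slots of tensor products, which is exactly what the factor $e(m,n,r)$ enforces), so your argument, once the component-wise estimates are written out --- a unit-weight pair contributes $\asymp T^{2H}$, a pair of weight $\epsilon\ge2$ contributes $o(T^{2H})$, and a component with $p\ge3$ vertices, reduced to a spanning tree, contributes $O(T^{1+(p-1)(2H-1)})=o(T^{pH})$ --- proves what the paper uses. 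In short: the paper's route buys brevity and a closed-form exponent where it applies; yours buys validity at the exponents that actually occur and at the borderline case $S=P/2$.
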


\noindent \begin{proof}
To prove the first point we expand $f$ in terms of Hermite polynomials:
$$f\left(  z\right)  =c_0+\sum_{q=1}^{\infty}c_{q}H_{q}\left(  z\right)  $$
with $c_{q}q!=\mathbb{E}\left[  H_{q}\left(  Z\right)f\left(  Z\right)  \right]  $ for all $q\geq0$. Since for any $h\in L^2([0,T])$ we have the relation $H_q(I_1(h))=I_q(h^{\otimes q})$, then the result follows.

\bigskip

For the second point we have that $\mathbf{K}_t^{\otimes k}\otimes_{r}\mathbf{K}_s^{\otimes j}=\left\langle \mathbf{K}_t,\mathbf{K}_s\right\rangle _{\mathfrak{H}}^{r}\left(
\mathbf{K}_t^{\otimes\left(  k-r\right)  }\otimes
\mathbf{K}_s^{\otimes\left(  j-r\right)  }\right)
=C_{ts}^{r}\left(  \mathbf{K}_t^{\otimes\left(
k-r\right)  }\otimes\mathbf{K}_s^{\otimes\left(
j-r\right)  }\right)  $. Therefore, from the previous point and the product formula (\ref{eqn_productW}),
\begin{align*}
F_{st}  &  =\sum_{k=1}^{\infty}c_{k}H_{k}\left(  X_{s}\right)  \sum
_{j=1}^{\infty}c_{j}H_{j}\left(  X_{t}\right)  =\sum_{k,j=1}^{\infty}%
c_{k}c_{j}I_{k}\left(  \mathbf{K}_s^{\otimes k}\right)
I_{j}\left(  \mathbf{K}_t^{\otimes j}\right)\hspace{1cm}  \\
&  =\sum_{k,j=1}^{\infty}c_{k}c_{j}\sum_{r=0}^{k\wedge j}r!\binom{k}{r}%
\binom{j}{r}I_{k+j-2r}\left(  \mathbf{K}_s^{\otimes
k}\otimes_{r}\mathbf{K}_t^{\otimes j}\right)
=\sum_{k,j=1}^{\infty}c_{k}c_{j}\sum_{r=0}^{k\wedge j}r!\binom{k}{r}\binom
{j}{r}C_{st}^{r}I_{k+j-2r}\left(  \mathbf{K}_s%
^{\otimes\left(  k-r\right)  }\otimes\mathbf{K}_t^{\otimes\left(  j-r\right)  }\right)  \text{.}%
\end{align*}
Write $c_{st}\left(  k,j,r\right)  =d\left(  k,j,r\right)  C_{st}%
^{r}I_{k+j-2r}\left(  \mathbf{K}_s^{\otimes\left(
k-r\right)  }\otimes\mathbf{K}_t^{\otimes\left(
j-r\right)  }\right)  $ where $d\left(  k,j,r\right)  =c_{k}c_{j}r!\binom
{k}{r}\binom{j}{r}$. Then%
\[
F_{st}=\sum_{k=1}^{\infty}\sum_{j=1}^{\infty}\sum_{r=0}^{k\wedge j}%
c_{st}\left(  k,j,r\right)  =\sum_{k=1}^{\infty}\sum_{j=1}^{\infty}%
c_{st}\left(  k,j,0\right)  +\sum_{k=1}^{\infty}\sum_{j=1}^{\infty}\sum
_{r=1}^{k\wedge j}c_{st}\left(  k,j,r\right).
\]
Applying Fubini's theorem for sums we have,
\begin{align*}
\sum_{k=1}^{\infty}\sum_{j=1}^{\infty}c_{st}\left(  k,j,0\right)   &
=\sum_{m=1}^{\infty}\sum_{j=1}^{\infty}c_{m}c_{j}I_{m+j}\left(  \mathbf{K}%
_s^{\otimes m}\otimes\mathbf{K}_t^{\otimes j}\right)  =\sum_{m=1}^{\infty}\sum_{n=m+1}^{\infty}c_{m}%
c_{n-m}I_{n}\left(  \mathbf{K}_s^{\otimes m}%
\otimes\mathbf{K}_t^{\otimes\left(  n-m\right)  }\right)
\\
&  =\sum_{n=2}^{\infty}\sum_{m=1}^{n-1}c_{m}c_{n-m}I_{n}\left(  \mathbf{K}%
_s^{\otimes m}\otimes\mathbf{K}_t^{\otimes\left(  n-m\right)  }\right)
\end{align*}
and%
$$\sum_{j=1}^{\infty}\sum_{r=1}^{k\wedge j}    =\sum_{j=1}^{k}\sum_{r=1}%
^{j}+\sum_{j=k+1}^{\infty}\sum_{r=1}^{k}=\sum_{r=1}^{k}\sum_{j=r}^{k}%
+\sum_{r=1}^{k}\sum_{j=k+1}^{\infty}=\sum_{r=1}^{k}\sum_{j=r}^{\infty}.$$
Therefore,
\begin{align*}
\sum_{k=1}^{\infty}\sum_{j=1}^{\infty}\sum_{r=1}^{k\wedge j}c_{st}\left(
k,j,r\right)   &  =\sum_{k=1}^{\infty}\sum_{r=1}^{k}\sum_{j=r}^{\infty}%
c_{st}\left(  k,j,r\right)  =\sum_{r=1}^{\infty}\sum_{m=0}^{\infty}\sum
_{n=m}^{\infty}c_{st}\left(  m+r,n-m+r,r\right) \\
&  =\sum_{r=1}^{\infty}\sum_{n=0}^{\infty}\sum_{m=0}^{n}c_{st}\left(
m+r,n-m+r,r\right) \\
&  =\sum_{n=0}^{\infty}\sum_{m=0}^{n}\sum_{r=1}^{\infty}d\left(
m+r,n-m+r,r\right)  C_{st}^{r}I_{n}\left(  \mathbf{K}_s^{\otimes m}\otimes\mathbf{K}_t^{\otimes\left(
n-m\right)  }\right)  \text{,}%
\end{align*}
establishing the second point.

\bigskip

Point 3 requires counting the possible combinations in the inner product. Note first that the symmetric tensor product $\mathbf{K}_t^{\otimes
a}\widetilde{\otimes}\mathbf{K}_s^{\otimes\left(  n-a\right)
}$ has $\binom{n}{a}$ distinct terms. Take any particular term $\alpha$ and
list down all its $n$ factors $\mathbf{K}_t$ and
$\mathbf{K}_s$ in the order in which they appear. Now
take any term $\beta$ from $\mathbf{K}_u^{\otimes
b}\widetilde{\otimes}\mathbf{K}_v^{\otimes\left(  n-b\right)
}$ and list down all its factors (in order) below those of $\alpha$. Let $p$
be the number of $\left(  \mathbf{K}_t,\mathbf{K}%
_u\right)  $ pairings. Thus, the number of pairings of
the type $\left(  \mathbf{K}_t,\mathbf{K}_v\right)  $, $\left(  \mathbf{K}_s%
,\mathbf{K}_u\right)  $ and $\left(  \mathbf{K}_s,\mathbf{K}_v\right)  $ are $a-p$, $b-p$ and
$n-a-b+p$, respectively. Finally, the number of pairs $\left(  \alpha
,\beta\right)  $ which have $p$ matching $\mathbf{K}_t$
and $\mathbf{K}_u$ is $\binom{n}{p,a-p,b-p,n-a-b+p}$.

\bigskip

Finally, to prove the fourth point we make use of Proposition 3 (point 4) in \cite{Viquez} which states that,
\[
\int_{T^{2}}\left|  C_{st}^{\epsilon}\right|  dsdt=O\left(  \frac{T^{2H}}{T^{\left(
2-2H\right)  \left(  \epsilon-1\right)  }}\right)
\]
so
\[
\left\|  C_{st}^{\epsilon}\right\|  _{Q}=\left[  \int_{T^{P}}\left|  C_{st}%
^{Q\epsilon}\right|  d\mathbf{s}\right]  ^{1/Q}=\left[  T^{P-2}\int_{T^{2}}\left|
C_{st}^{Q\epsilon}\right|  dsdt\right]  ^{1/Q}=O\left(  \frac{T^{\left(
P-2+2H\right)  /Q}}{T^{\left(  2-2H\right)  \left(  Q\epsilon-1\right)  /Q}}\right)
\text{.}%
\]
By the generalized H\"{o}lder inequality for integrals,
\begin{align*}
L\left(  T\right)   &  =\frac{1}{T^{PH}}\left\|  \prod_{s_{i}\neq s_{j}%
}C_{s_{i}s_{j}}^{\epsilon(s_{i},s_{j})}\right\|  _{1}\leq\frac{1}%
{T^{PH}}\prod_{s_{i}\neq s_{j}}\left\|  C_{s_{i}s_{j}}^{\epsilon(s_{i},s_{j})}\right\|  _{Q}\\
&  =O\left(  \frac{1}{T^{PH}}\prod_{s_{i}\neq s_{j}}\frac{T^{\left(
P-2+2H\right)  /Q}}{T^{\left(  2-2H\right)  \left(  Q\epsilon(s_{i},s_{j})  -1\right)  /Q}}\right)  =O\left(  \frac{1}{T^{\left(
1-H\right)  \left(  2S-P\right)  }}\right)
\end{align*}
and the result follows.
\end{proof}

\bigskip

\begin{theorem}
Suppose $f:\mathbb{R}\to\mathbb{R}$ is of class $\mathcal{C}^2$ such that $\Sigma^2:=2\bigl(\mathbb{E}[Zf(Z)]\bigr)^2\neq0$. Then as $T\to\infty$,
$$F_T\xrightarrow[]{\text{Law}}\chi^2.$$
\end{theorem}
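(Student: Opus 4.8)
The plan is to recognize the limit $Z$ as the centered chi-squared law with one degree of freedom and to run the sequential NP bound on the family $\{F_T\}$. By Table \ref{table_geestar} this $Z$ is a Gamma-type Pearson variable with $g_\ast(z)=2z+2$ (so $\alpha=0$, $\beta=2$, $\gamma=2$, support $(-1,\infty)$, variance $\gamma=2$), for which $\mathbb{E}[g_\ast(Z)^2]=\mathbb{E}[g_\ast(Z)g_Z]=\mathbb{E}[g_Z^2]=12$. Writing $Y_T=H_T/\Sigma$, so that $F_T=Y_T^2-\mathbb{E}[Y_T^2]$, I will verify the three requirements of Corollary \ref{cor_Pearson}, item 2 (the Gamma case, under Assumption A): (i) $\mathbb{E}[F_T^2]\to 2$; (ii) $\mathbb{E}[F_Tg_{F_T}]\to \beta\gamma=4$; and (iii) $\operatorname{Var}g_{F_T}\to\beta^2\gamma=8$. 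This is exactly where inequality (\ref{eqn_inequality3}) is preferable to (\ref{eqn_inequality1}): by the moments formula of Proposition \ref{lem_expec}, $\mathbb{E}[F_Tg_{F_T}]=\tfrac12\mathbb{E}[F_T^3]$ and $\mathbb{E}[g_{F_T}]=\mathbb{E}[F_T^2]$, so (i) and (ii) become the purely polynomial assertions $\mathbb{E}[F_T^2]\to 2$ and $\mathbb{E}[F_T^3]\to 8$, requiring no explicit knowledge of $g_{F_T}$ as a function of $F_T$.

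Because $F_T=Y_T^2-\mathbb{E}[Y_T^2]$, each $\mathbb{E}[F_T^k]$ is a fixed polynomial in $\mathbb{E}[Y_T^2],\dots,\mathbb{E}[Y_T^{2k}]$, so (i) and (ii) follow once I show the Gaussian moment convergence $\mathbb{E}[Y_T^{2j}]\to(2j-1)!!$ for $j=1,2,3$; indeed this gives $\mathbb{E}[F_T^2]\to 3-1=2$ and $\mathbb{E}[F_T^3]\to 15-9+3-1=8$. To obtain it I expand $f(X_s)-\mu_f$ in chaos (Proposition \ref{calculations}, points 1 and 2), so $Y_T=\sum_{q\ge1}I_q(g_{q,T})$ with $g_{q,T}=\tfrac{c_q}{\Sigma T^H}\int_0^T\mathbf{K}_s^{\otimes q}\,ds$, multiply out $2j$ factors, and apply the decay estimate of Proposition \ref{calculations}, point 4: after normalizing by $T^{PH}$ only configurations with exponent sum $S=P/2$ survive, those with $S>P/2$ vanishing. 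The surviving configurations are precisely the first-chaos ($q=1$) Wick pairings, which reproduce $(2j-1)!!$, while all higher chaoses force $S>P/2$ and die. In particular $Y_T$ is $L^2$-asymptotic to a first-chaos variable $I_1(g_{1,T})$ with $\|g_{1,T}\|_{\mathfrak{H}}^2\to1$.

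Condition (iii) is the real obstacle, since $g_{F_T}=\mathbb{E}[\langle DF_T,-DL^{-1}F_T\rangle_{\mathfrak{H}}\mid F_T]$ is a conditional expectation and $\mathbb{E}[g_{F_T}^2]$ is not a polynomial moment; equivalently I must show $\mathbb{E}[g_{F_T}^2]\to 12$. One inequality is free: having established (i) and (ii), Cauchy--Schwarz with the test variable $g_\ast(F_T)=2F_T+2$ gives $\mathbb{E}[g_{F_T}^2]\ge \mathbb{E}[(2F_T+2)g_{F_T}]^2/\mathbb{E}[(2F_T+2)^2]$, whose right-hand side tends to $12^2/12=12$, so $\liminf_T\mathbb{E}[g_{F_T}^2]\ge 12$. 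The matching upper bound is the hard part: using $\mathbb{E}[g_{F_T}^2]\le\mathbb{E}[\Gamma_T^2]$ with $\Gamma_T:=\langle DF_T,-DL^{-1}F_T\rangle_{\mathfrak{H}}$, I must show $\mathbb{E}[\Gamma_T^2]\to 12$. Here I use the structure from the previous paragraph: since $Y_T$ is $L^2$-close to $I_1(g_{1,T})$, the variable $F_T$ is $L^2$-close to the second-chaos element $I_2(g_{1,T}^{\otimes2})$, for which $-DL^{-1}F_T=\tfrac12 DF_T$ and hence $\Gamma_T=\tfrac12\|DF_T\|_{\mathfrak{H}}^2$ modulo negligible terms; $\mathbb{E}[\Gamma_T^2]$ then reduces, via the inner-product identity of Proposition \ref{calculations}, point 3, to covariance-product integrals of $L(T)$-type, again controlled by point 4, whose surviving terms total $12$. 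In fact this shows $\Gamma_T\to 2F_T+2=g_\ast(F_T)$ in $L^2$.

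Squeezing the two bounds yields $\mathbb{E}[g_{F_T}^2]\to 12$, i.e. $\operatorname{Var}g_{F_T}\to 8$. With (i), (ii), (iii) in hand, Corollary \ref{cor_Pearson} (equivalently Corollary \ref{cor_boundsequence}) gives $d_{\mathcal{H}}(F_T,Z)\to0$ and hence $F_T\xrightarrow{\text{Law}}\chi^2$. I expect the decisive difficulty to be the upper bound on $\mathbb{E}[\Gamma_T^2]$: it requires first making rigorous the reduction of $F_T$ to its second-chaos part (estimating in $L^2$, via hypercontractivity, the higher-chaos remainder of $Y_T$ and the cross terms produced by squaring) and then the combinatorial bookkeeping of exactly which covariance-product configurations sit at the critical balance $S=P/2$ of Proposition \ref{calculations}, point 4.
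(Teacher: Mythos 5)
Your proposal follows the same skeleton as the paper's proof: recognize the limit as the centered $\chi^2$ with one degree of freedom, i.e. the Pearson--Gamma case $g_*(z)=2z+2$ ($\alpha=0$, $\beta=2$, $\gamma=2$), and verify the three conditions of Corollary \ref{cor_Pearson} using the chaos machinery of Proposition \ref{calculations}, above all its point 4 (only configurations at the critical exponent sum $S=P/2$ survive the normalization). Within that common frame you make two genuinely different moves. First, you reduce conditions (i)--(ii) to pure moments, $\mathbb{E}[F_T^2]\to2$ and $\mathbb{E}[F_T^3]\to8$, via $\mathbb{E}[F_Tg_{F_T}]=\tfrac12\mathbb{E}[F_T^3]$ (Proposition \ref{lem_expec}), and then obtain these from the Gaussian moment convergence $\mathbb{E}[Y_T^{2j}]\to(2j-1)!!$, $j\le3$; your diagram count is right (every factor has chaos order $q_i\ge1$, so $S=\tfrac12\sum_iq_i\ge j=P/2$ with equality only for the all--first-chaos pairings), and this replaces the paper's Malliavin-derivative computations of $\mathbb{E}[g_{F_T}]$ and of $\mathbb{E}[\widetilde F_Tg_{F_T}]$ in (\ref{ex_eqn_FT2a}) and (\ref{ex_eqn_E3a}) by a plain moment computation. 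Second, you treat the conditional expectation honestly: the paper's step $\mathbb{E}[g_{F_T}^2]=\mathbb{E}[\langle DF_T,-DL^{-1}F_T\rangle_{\mathfrak{H}}^2]$ is, as written, only an inequality ``$\leq$'' (Jensen), and your Cauchy--Schwarz lower bound together with the Jensen upper bound squeezes $\mathbb{E}[g_{F_T}^2]\to12$ exactly. (The paper's conclusion is unaffected, since the NP bound only needs an upper bound on $\mathbb{E}[(g_*(F_T)-\Gamma_T)^2]$ with $\Gamma_T:=\langle DF_T,-DL^{-1}F_T\rangle_{\mathfrak{H}}$, and $\mathbb{E}[g_*(F_T)g_{F_T}]=\mathbb{E}[g_*(F_T)\Gamma_T]$ by the tower property; but your version is the careful one.)

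The one place where your proposal is not yet a proof is exactly where you flag it: the upper bound $\mathbb{E}[\Gamma_T^2]\to12$. The reduction you sketch --- $F_T$ is $L^2$-close to $I_2(g_{1,T}^{\otimes2})$, hence $\Gamma_T=\tfrac12\|DF_T\|_{\mathfrak{H}}^2$ ``modulo negligible terms'' --- does not follow from $L^2$-closeness, because $D$ and $DL^{-1}$ are unbounded operators: writing $F_T=P_T+R_T$ with $R_T$ the higher-chaos remainder, you must estimate in $L^2$ the terms $\langle DP_T,-DL^{-1}R_T\rangle_{\mathfrak{H}}$, $\langle DR_T,-DL^{-1}P_T\rangle_{\mathfrak{H}}$ and $\langle DR_T,-DL^{-1}R_T\rangle_{\mathfrak{H}}$, and smallness of $\mathbb{E}[R_T^2]$ gives no control over them; moreover $R_T$ is spread over infinitely many chaoses, so hypercontractivity cannot be applied with uniform constants without a truncation argument whose tail bounds are uniform in $T$. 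Once you expand these cross terms over all chaos indices and sort them by the criticality $S=4$, $P=8$, you are performing precisely the paper's computation in the part ``Convergence of the Second Moment of $g_{F_T}$'' (equation (\ref{ex_eqn_g2}) onward). So your plan is correct and would close, and it buys cleaner treatments of (i), (ii) and of the conditional-expectation issue; but its decisive step is not an alternative to the paper's eight-variable expansion --- it \emph{is} that expansion, and it still has to be carried out.
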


\bigskip

\noindent \begin{proof}
Throughout this proof we will use the symbol $\sim$ to relate two expressions which have the same limit as $T\rightarrow\infty$. Also, $\boldsymbol{\kappa}$ will represent a constant whose value may change from one equation to another and might depend on the summation indices.

\bigskip

Since $\chi^2$ is a member of the Pearson family, we can make use of Corollary \ref{cor_Pearson} to prove this theorem. This will require proving convergence of the second moment of $F_T$ and of $g_{F_T}$, plus the moment convergence of $F_Tg_{F_T}$. We take note of the following facts: $\Sigma^{2}=2c_{1}^{2}$ ($c_1=\mathbb{E}[Zf(Z)]$), and $T^{-2H}\int_{T^2}C_{ts}dtds\to2$ as $T\to\infty$ (see Proposition 3 in \cite{Viquez}). 

\begin{itemize}

\item {\bf Convergence of the Second Moment of $\boldsymbol{F_T}$:}\\
From point 2 of Proposition \ref{calculations},%

\begin{equation}
DF_{st}=\sum_{n=1}^{\infty}\sum_{m=0}^{n}\sum_{r=0}^{\infty}ne\left(
m,n,r\right)  d\left(  m+r,n-m+r,r\right)  I_{n-1}\left(  \mathbf{K}%
_{s}^{\otimes m}\otimes\mathbf{K}_{t}^{\otimes\left(  n-m\right)  }\right)
C_{st}^{r} \label{ex_eqn_DFst}%
\end{equation}
and%
\begin{equation}
-DL^{-1}F_{uv}=\sum_{N=1}^{\infty}\sum_{M=0}^{N}\sum_{R=0}^{\infty}e\left(
M,N,R\right)  d\left(  M+R,N-M+R,R\right)  I_{N-1}\left(  \mathbf{K}%
_{u}^{\otimes M}\otimes\mathbf{K}_{v}^{\otimes\left(  N-M\right)  }\right)
C_{uv}^{R}\text{.} \label{ex_eqn_DLFuv}%
\end{equation}
Each indicated multiple integral of order $n-1$, with kernel in $\mathfrak{H}%
^{\otimes n}$, is to be interpreted as
\begin{align}
I_{n-1}\left(  \mathbf{K}_{s}^{\otimes m}\otimes\mathbf{K}_{t}^{\otimes\left(
n-m\right)  }\right)   &  =\frac{m}{n}I_{n-1}\left(  \mathbf{K}_{s}%
^{\otimes\left(  m-1\right)  }\otimes\mathbf{K}_{t}^{\otimes\left(
n-m\right)  }\right)  \mathbf{K}_{s}+\frac{n-m}{n}I_{n-1}\left(
\mathbf{K}_{s}^{\otimes m}\otimes\mathbf{K}_{t}^{\otimes\left(  n-m-1\right)
}\right)  \mathbf{K}_{t}\nonumber\\
&  =\sum_{a=m-1}^{m}\frac{k^{\left(  m,n,a\right)  }}{n}I_{n-1}\left(
\mathbf{K}_{s}^{\otimes a}\otimes\mathbf{K}_{t}^{\otimes\left(  n-1-a\right)
}\right)  \mathbf{f}_{st}^{\left(  m,a\right)  } \label{ex_eqn_In1}%
\end{align}
where $k^{\left(  m,n,a\right)  }\mathbf{f}_{st}^{\left(  m,a\right)
}=\left\{
\begin{array}
[c]{ll}%
m\mathbf{K}_{s} & \text{if }a=m-1\geq0\\
\left(  n-m\right)  \mathbf{K}_{t} & \text{if }a=m
\end{array}
\right.  $.

Therefore,$\ \mathbb{E}\left[  \left\langle DF_{st},-DL^{-1}%
F_{uv}\right\rangle _{\mathfrak{H}}\right]  $ will be a summation containing
terms of the form
\begin{align}
&  \boldsymbol{\kappa}\mathbb{E}\left[  \left\langle I_{n-1}\left(  \mathbf{K}_{s}^{\otimes
a}\otimes\mathbf{K}_{t}^{\otimes\left(  n-1-a\right)  }\right)  \mathbf{f}%
_{st}^{\left(  m,a\right)  },I_{N-1}\left(  \mathbf{K}_{u}^{\otimes A}%
\otimes\mathbf{K}_{v}^{\otimes\left(  N-1-A\right)  }\right)  \mathbf{f}%
_{uv}^{\left(  M,A\right)  }\right\rangle _{\mathfrak{H}}\right]  C_{st}%
^{r}C_{uv}^{R}\label{ex_eqn_correxpsum1}\\
&  =\mathbf{1}_{\{n=N\}}\boldsymbol{\kappa}\left\langle \mathbf{K}_{s}^{\otimes a}\widetilde{\otimes
}\mathbf{K}_{t}^{\otimes\left(  n-1-a\right)  },\mathbf{K}_{u}^{\otimes
A}\widetilde{\otimes}\mathbf{K}_{v}^{\otimes\left(  n-1-A\right)  }\right\rangle
_{\mathfrak{H}^{\otimes\left(  n-1\right)  }}\left\langle \mathbf{f}%
_{st}^{\left(  m,a\right)  },\mathbf{f}_{uv}^{\left(  M,A\right)
}\right\rangle _{\mathfrak{H}}C_{st}^{r}C_{uv}^{R}\label{ex_eqn_correxpsum2}\\
&  =\mathbf{1}_{\{n=N\}}\sum_{p}\boldsymbol{\kappa}C_{su}^{p}C_{sv}^{a-p}C_{tu}^{A-p}C_{tv}%
^{n-1-a-A+p}\left\langle \mathbf{f}_{st}^{\left(  m,a\right)  },\mathbf{f}%
_{uv}^{\left(  M,A\right)  }\right\rangle _{\mathfrak{H}}C_{st}^{r}C_{uv}^{R}
\label{ex_eqn_correxpsum3}%
\end{align}
where from (\ref{ex_eqn_correxpsum2}) to (\ref{ex_eqn_correxpsum3}), we used
the third point of Proposition \ref{calculations}.

\bigskip

Observe that $\left\langle \mathbf{f}_{st}^{\left(  m,a\right)  }%
,\mathbf{f}_{uv}^{\left(  M,A\right)  }\right\rangle _{\mathfrak{H}}$ is any of
$C_{su}$, $C_{sv}$, $C_{tu}$ or $C_{tv}$. In (\ref{ex_eqn_correxpsum3}), the
exponents of all six types of correlations then add up to $S=n+r+R=N+r+R$ (at
this point, always take $N=n$, otherwise the term is 0). In the summations (\ref{ex_eqn_DFst}) and
(\ref{ex_eqn_DLFuv}) appearing in $\left\langle DF_{st},-DL^{-1}%
F_{uv}\right\rangle _{\mathfrak{H}}$, since $e\left(  m,n,r\right)  e\left(
M,N,R\right)  =0$ if $S\leq1$, the remaining terms are those for which
$S\geq2$. Therefore, using the moments formula of Proposition \ref{lem_expec},
\begin{align}
\mathbb{E}\left[  F_{T}^{2}\right]   &  =\mathbb{E}\left[  g_{F_{T}}\right]
=\mathbb{E}\left[  \left\langle DF_{T},-DL^{-1}F%
_{T}\right\rangle _{\mathfrak{H}}\right]  =\frac{1}{\Sigma^{4}T^{4H}}%
\int_{T^{4}}\mathbb{E}\left[  \left\langle DF_{st},-DL^{-1}F_{uv}\right\rangle
_{\mathfrak{H}}\right]  dsdtdudv\label{ex_eqn_FT2a}\\
&  \sim\frac{1}{\Sigma^{4}T^{4H}}\int_{T^{4}%
}\mathbb{E}\left[  \left\langle 2d\left(  1,1,0\right)  I_{1}\left(
\mathbf{K}_{s}\otimes\mathbf{K}_{t}\right)  ,d\left(  1,1,0\right)
I_{1}\left(  \mathbf{K}_{u}\otimes\mathbf{K}_{v}\right)  \right\rangle
_{\mathfrak{H}}\right]  dsdtdudv\label{ex_eqn_FT2b}%
\end{align}
where in (\ref{ex_eqn_FT2b}), we applied point 4 of Proposition \ref{calculations} ($P=4$) on
(\ref{ex_eqn_correxpsum3}) and (\ref{ex_eqn_FT2a}): for those terms
contributed by $\mathbb{E}\left[  \left\langle DF_{st},-DL^{-1}F_{uv}%
\right\rangle _{\mathfrak{H}}\right]  $ where $S>2$, the limit is $0$. The
limit in (\ref{ex_eqn_FT2a}) is the nonzero value we get for the remaining
case $S=2$; specifically, $\left(  n,N,m,M,r,R\right)  =\left(
2,2,1,1,0,0\right)  $. Since%
\begin{multline}
\left\langle I_{1}\left(  \mathbf{K}_{s}\otimes\mathbf{K}_{t}\right)
,I_{1}\left(  \mathbf{K}_{u}\otimes\mathbf{K}_{v}\right)  \right\rangle
_{\mathfrak{H}}= \frac{I_{1}\left(  \mathbf{K}_{s}\right)  I_{1}\left(
\mathbf{K}_{u}\right)  C_{tv}+I_{1}\left(  \mathbf{K}_{t}\right)  I_{1}\left(
\mathbf{K}_{u}\right)  C_{sv}}{4} \\ + \frac{I_{1}\left(  \mathbf{K}_{s}\right)  I_{1}\left(
\mathbf{K}_{v}\right)  C_{tu}+I_{1}\left(  \mathbf{K}_{t}\right)  I_{1}\left(
\mathbf{K}_{v}\right)  C_{su}}{4}\text{,}\label{ex_eqn_I1kernel2}%
\end{multline}
then%
\[
\mathbb{E}\left[  F_{T}^{2}\right]  \sim\frac{2c_{1}^{4}}{\Sigma^{4}T^{4H}%
}\int_{T^{4}}\frac{1}{4}\left[  2C_{su}C_{tv}+2C_{sv}C_{tu}\right]
dsdtdudv\rightarrow\frac{2c_{1}^{4}}{\Sigma^{4}}\left(  2\right)  ^{2}%
=2\text{.}%
\]
Therefore,
$$\boxed{\mathbb{E}\left[  F_{T}^{2}\right]  \to 2}$$

\item {\bf Convergence of $\boldsymbol{F_Tg_{F_T}}$:}\\
Notice that
\[
\mathbb{E}\left[  F_{T}g_{F_{T}}\right]
=\mathbb{E}\left[  \widetilde{F}_{T}g_{F_{T}}\right]  -\mathbb{E}\left[
\widetilde{F}_{T}\right]  \mathbb{E}\left[  g_{F_{T}}\right]  \sim\mathbb{E}%
\left[  \widetilde{F}_{T}g_{F_{T}}\right]  -2
\]
since%
\[
\mathbb{E}\left[  \widetilde{F}_{T}\right]  =\frac{1}{\Sigma^{2}T^{2H}}\int
_{T^{2}}\mathbb{E}\left[  F_{st}\right]  dsdt=\sum_{r=1}^{\infty}%
\frac{d\left(  r,r,r\right)  }{\Sigma^{2}T^{2H}}\int_{T^{2}}C_{st}^{r}%
dsdt\sim\frac{d\left(  1,1,1\right)  }{\Sigma^{2}T^{2H}}\int_{T^{2}}%
C_{st}dsdt\rightarrow\frac{2c_{1}^{2}}{\Sigma^{2}}=1\text{.}%
\]
Now we need to investigate%
\begin{equation}
\mathbb{E}\left[  \widetilde{F}_{T}g_{F_{T}}\right]  =\frac{1}{\Sigma^{6}T^{6H}%
}\int_{T^{6}}\mathbb{E}\left[  \left\langle DF_{st},-DL^{-1}F_{uv}%
\right\rangle _{\mathfrak{H}}F_{wx}\right]  dsdtdudvdwdx\text{.}%
\label{ex_eqn_E3a}%
\end{equation}
The expression inside the expectation is a summation with generic term%
\begin{align}
\hspace{-2cm}&\left\langle I_{n-1}\left(  \mathbf{K}_{s}^{\otimes m}\otimes\mathbf{K}%
_{t}^{\otimes\left(  n-m\right)  }\right)  C_{st}^{r},I_{N-1}\left(
\mathbf{K}_{u}^{\otimes M}\otimes\mathbf{K}_{v}^{\otimes\left(  N-M\right)
}\right)  C_{uv}^{R}\right\rangle _{\mathfrak{H}}I_{n^{\prime}}\left(
\mathbf{K}_{w}^{\otimes m^{\prime}}\otimes\mathbf{K}_{x}^{\otimes\left(
n^{\prime}-m^{\prime}\right)  }\right)  C_{wx}^{r^{\prime}}\nonumber\\
\hspace{-2cm} & \hspace{1cm} =\sum_{a,A}\boldsymbol{\kappa}I_{n-1}\left(  \mathbf{K}_{s}^{\otimes a}\otimes\mathbf{K}%
_{t}^{\otimes\left(  n-1-a\right)  }\right)  I_{N-1}\left(  \mathbf{K}%
_{u}^{\otimes A}\otimes\mathbf{K}_{v}^{\otimes\left(  N-1-A\right)  }\right)
I_{n^{\prime}}\left(  \mathbf{K}_{w}^{\otimes m^{\prime}}\otimes\mathbf{K}%
_{x}^{\otimes\left(  n^{\prime}-m^{\prime}\right)  }\right)  \left\langle
\mathbf{f}_{st}^{\left(  m,a\right)  },\mathbf{f}_{uv}^{\left(  M,A\right)
}\right\rangle _{\mathfrak{H}}C_{st}^{r}C_{uv}^{R}C_{wx}^{r^{\prime}%
}\label{ex_eqn_E3generic}%
\end{align}
where $a\in\left\{  m-1,m\right\}  $, $A\in\left\{  M-1,M\right\}  $ and
$\left\langle \mathbf{f}_{st}^{\left(  m,a\right)  },\mathbf{f}_{uv}^{\left(
M,A\right)  }\right\rangle _{\mathfrak{H}}$ could be any of $C_{su}$, $C_{sv}$,
$C_{tu}$ or $C_{tv}$ (we used (\ref{ex_eqn_In1}) here).%
\begin{align}
\hspace{-2cm} &  \mathbb{E}\left[  I_{n-1}\left(  \mathbf{K}_{s}^{\otimes a}\otimes
\mathbf{K}_{t}^{\otimes\left(  n-1-a\right)  }\right)  I_{N-1}\left(
\mathbf{K}_{u}^{\otimes A}\otimes\mathbf{K}_{v}^{\otimes\left(  N-1-A\right)
}\right)  I_{n^{\prime}}\left(  \mathbf{K}_{w}^{\otimes m^{\prime}}%
\otimes\mathbf{K}_{x}^{\otimes\left(  n^{\prime}-m^{\prime}\right)  }\right)
\right]  \nonumber\\
\hspace{-2cm}&  =\sum_{f=0}^{\left(  n\wedge N\right)  -1}\boldsymbol{\kappa}\left( \left[  \left(
\mathbf{K}_{s}^{\otimes a}\widetilde{\otimes}\mathbf{K}_{t}^{\otimes\left(
n-1-a\right)  }\right)  \widetilde{\otimes}_{f}\left(  \mathbf{K}_{u}^{\otimes
A}\widetilde{\otimes}\mathbf{K}_{v}^{\otimes\left(  N-1-A\right)  }\right)
\right]  \widetilde{\otimes}_{n+N-2-2f}\left[  \mathbf{K}_{w}^{\otimes m^{\prime}%
}\widetilde{\otimes}\mathbf{K}_{x}^{\otimes\left(  n^{\prime}-m^{\prime}\right)
}\right]  \right)  \mathbf{1}_{\{n^{\prime}=n+N-2-2f\}}\nonumber\\
\hspace{-2cm}&  =\sum_{\{2f=n+N-n^{\prime}-2\}}\sum_{\{\epsilon\in\boldsymbol{A}_f\}}\boldsymbol{\kappa} C_{su}^{\epsilon(s,u)}%
C_{sv}^{\epsilon(s,v)}C_{tu}^{\epsilon(t,u)}C_{tv}^{\epsilon(t,v)}\nonumber\\
\hspace{-2cm}&  \hspace{1cm} \times\left\langle \mathbf{K}_{s}^{\otimes \left(a-\epsilon(s,u)
-\epsilon(s,v)\right)}\widetilde{\otimes}\mathbf{K}_{t}^{\otimes \left(n-1-a-\epsilon(t,u)
-\epsilon(t,v)\right)}\widetilde{\otimes}\mathbf{K}_{u}^{\otimes \left(A-\epsilon(s,u)
-\epsilon(t,u)\right)}\widetilde{\otimes}\mathbf{K}_{v}^{\otimes\left(  N-1-A-\epsilon(s,v)  -\epsilon(t,v)\right)  },\mathbf{K}_{w}^{\otimes m^{\prime}%
}\widetilde{\otimes}\mathbf{K}_{x}^{\otimes\left(  n^{\prime}-m^{\prime}\right)
}\right\rangle _{\mathfrak{H}^{\otimes n^{\prime}}}\nonumber\\
\hspace{-2cm}&  =\sum_{\{2f=n+N-n^{\prime}-2\}}\sum_{\{\epsilon\in\boldsymbol{A}_f,\xi\in\boldsymbol{B}^{\boldsymbol{A}}_f\}}\boldsymbol{\kappa} C_{su}^{\epsilon(s,u)}%
C_{sv}^{\epsilon(s,v)}C_{tu}^{\epsilon(t,u)}C_{tv}^{\epsilon(t,v)}C_{sw}^{\xi(s,w)}C_{sx}^{\xi(s,x)}%
C_{tw}^{\xi(t,w)}C_{tx}^{\xi(t,x)}C_{uw}^{\xi(u,w)}C_{ux}^{\xi(u,x)}C_{vw}^{\xi(v,w)}%
C_{vx}^{\xi(v,x)}\label{ex_eqn_E3corr}%
\end{align}
where $\boldsymbol{A}_f$ is the collection of exponents $\epsilon(t,u)=p$, $\epsilon(t,v)=S_1-p$, $\epsilon(s,u)=S_2-p$ and $\epsilon(s,v)=f-S_1-S_2+p$, with $p$, $S_1$, $S_2$ running over all integers such that $\max(0,S_1+S_2-f)\leq p \leq \min(S_1,S_2)$, $\max(0,n-1-a-f)\leq S_1\leq a$ and $\max(0,N-1-A-f)\leq S_1\leq A$. $\boldsymbol{B}^{\boldsymbol{A}}_f$ is defined similarly, just applying recursively point 3 of Proposition \ref{calculations}, so for instance we have $\xi(t,w)+\xi(t,x)+\xi(s,w)+\xi(s,x)+\xi(u,w)+\xi(u,x)+\xi(v,w)+\xi(v,x)=n^{\prime}$. From this
point on, we take $2f=n+N-n^{\prime}-2$. Combining (\ref{ex_eqn_E3generic})
and (\ref{ex_eqn_E3corr}), we see that the expectation inside the integral of
(\ref{ex_eqn_E3a}) is a summation
\[
\sum_{n=1}^{\infty}\sum_{m=0}^{n}\sum_{r=0}^{\infty}\sum_{N=1}^{\infty}%
\sum_{M=0}^{N}\sum_{R=0}^{\infty}\sum_{n^{\prime}=0}^{\infty}\sum_{m^{\prime
}=0}^{n^{\prime}}\sum_{r^{\prime}=0}^{\infty}\sum_{a,A}\sum_{\{2f=n+N-n^{\prime}-2\}}\sum_{\{\epsilon\in\boldsymbol{A}_f,\xi\in\boldsymbol{B}^{\boldsymbol{A}}_f\}}%
\]
with each term consisting of $15$ different types of correlations whose
exponents add up to $S=f+n^{\prime}+1+r+R+r^{\prime}$. From this it follows
that $n+N+n^{\prime}+2\left(  r+R+r^{\prime}\right)  =2S$. Systematically
listing down all possible values of the indices in this summation will show
that $e\left(  m,n,r\right)  e\left(  M,N,R\right)  e\left(  m^{\prime
},n^{\prime},r^{\prime}\right)  =0$ when $S\leq2$. For those terms in the
summation for which $S>3$, the limit they contribute in (\ref{ex_eqn_E3a}) is
$0$ (using the fourth point in Proposition \ref{calculations} with $P=6$ variables\thinspace$s$, $t$, $u$,
$v$, $w$, $x$). For those terms having $S=3$, a careful consideration of the
indices such that $e\left(  m,n,r\right)  e\left(  M,N,R\right)  e\left(
m^{\prime},n^{\prime},r^{\prime}\right)  >0$ leads to either $\left(
m,n,r,M,N,R,m^{\prime},n^{\prime},r^{\prime}\right)  =\left(
1,2,0,1,2,0,0,0,1\right)  $ or $\left(  1,2,0,1,2,0,1,2,0\right)  $.
Therefore, we can continue (\ref{ex_eqn_E3a}) as%
\begin{align*}
\mathbb{E}\left[  \widetilde{F}_{T}g_{F_{T}}\right]   &  \sim\frac{2c_{1}^{6}%
}{\Sigma^{6}T^{6H}}\int_{T^{6}}\mathbb{E}\left[  \left\langle I_{1}\left(
\mathbf{K}_{s}\otimes\mathbf{K}_{t}\right)  ,I_{1}\left(  \mathbf{K}%
_{u}\otimes\mathbf{K}_{v}\right)  \right\rangle _{\mathfrak{H}}\right]
C_{wx}dsdtdudvdwdx\\
&  \quad\quad+\frac{2c_{1}^{6}}{\Sigma^{6}T^{6H}}\int_{T^{6}}\mathbb{E}\left[
\left\langle I_{1}\left(  \mathbf{K}_{s}\otimes\mathbf{K}_{t}\right)
,I_{1}\left(  \mathbf{K}_{u}\otimes\mathbf{K}_{v}\right)  \right\rangle
_{\mathfrak{H}}I_{2}\left(  \mathbf{K}_{w}\otimes\mathbf{K}_{x}\right)
\right]  dsdtdudvdwdx\\
&  =L_{1}+L_{2}\text{.}%
\end{align*}
Using (\ref{ex_eqn_I1kernel2}), we have%
\begin{align*}
L_{1} &  =\frac{2c_{1}^{6}}{\Sigma^{6}}\cdot\frac{1}{T^{2H}}\int_{T^{2}}%
C_{wx}dwdx\cdot\frac{1}{T^{4H}}\int_{T^{4}}\mathbb{E}\left[  \left\langle
I_{1}\left(  \mathbf{K}_{s}\otimes\mathbf{K}_{t}\right)  ,I_{1}\left(
\mathbf{K}_{u}\otimes\mathbf{K}_{v}\right)  \right\rangle _{\mathfrak{H}%
}\right]  dsdtdudv\\
&  \sim\frac{2c_{1}^{6}}{\Sigma^{6}}\cdot(2)\cdot\frac{1}{T^{4H}}\int_{T^{4}%
}\frac{2C_{su}C_{tv}+2C_{sv}C_{tu}}{4}dsdtdudv\rightarrow\frac{2c_{1}^{6}}{\Sigma^{6}}\cdot(2)\cdot\left(  2\right)  ^{2}=2\biggl(\frac{2c_{1}^{2}}{\Sigma^{2}}\biggr)^3=2\text{.}%
\end{align*}
To compute $L_{2}$, we use (\ref{ex_eqn_I1kernel2}) again so%
\begin{align*}
&  \mathbb{E}\left[  \left\langle I_{1}\left(  \mathbf{K}_{s}\otimes
\mathbf{K}_{t}\right)  ,I_{1}\left(  \mathbf{K}_{u}\otimes\mathbf{K}%
_{v}\right)  \right\rangle _{\mathfrak{H}}I_{2}\left(  \mathbf{K}_{w}%
\otimes\mathbf{K}_{x}\right)  \right]  \\
& \hspace{1.5cm} =\frac{1}{4}\mathbb{E}\left[  I_{1}\left(  \mathbf{K}_{s}\right)
I_{1}\left(  \mathbf{K}_{u}\right)  I_{2}\left(  \mathbf{K}_{w}\otimes
\mathbf{K}_{x}\right)  \right]  C_{tv}+\frac{1}{4}\mathbb{E}\left[
I_{1}\left(  \mathbf{K}_{t}\right)  I_{1}\left(  \mathbf{K}_{u}\right)
I_{2}\left(  \mathbf{K}_{w}\otimes\mathbf{K}_{x}\right)  \right]  C_{sv}\\
&  \hspace{2.5cm}+\frac{1}{4}\mathbb{E}\left[  I_{1}\left(  \mathbf{K}_{s}\right)
I_{1}\left(  \mathbf{K}_{v}\right)  I_{2}\left(  \mathbf{K}_{w}\otimes
\mathbf{K}_{x}\right)  \right]  C_{tu}+\frac{1}{4}\mathbb{E}\left[
I_{1}\left(  \mathbf{K}_{t}\right)  I_{1}\left(  \mathbf{K}_{v}\right)
I_{2}\left(  \mathbf{K}_{w}\otimes\mathbf{K}_{x}\right)  \right]
C_{su}\text{.}%
\end{align*}
The first expectation simplifies to%
\begin{align*}
\mathbb{E}\left[  I_{1}\left(  \mathbf{K}_{s}\right)  I_{1}\left(
\mathbf{K}_{u}\right)  I_{2}\left(  \mathbf{K}_{w}\otimes\mathbf{K}%
_{x}\right)  \right]   &  =\sum_{r=0}^{1}r!\binom{1}{r}\binom{1}{r}%
\mathbb{E}\left[  I_{2-2r}\left(  \mathbf{K}_s\otimes
_{r}\mathbf{K}_u\right)  I_{2}\left(  \mathbf{K}_w\otimes\mathbf{K}_x\right)  \right]  \\
&  =2!\left\langle \mathbf{K}_{s}\widetilde{\otimes}\mathbf{K}_{u},\mathbf{K}%
_{w}\widetilde{\otimes}\mathbf{K}_{x}\right\rangle _{\mathfrak{H}^{\otimes2}%
}=2\frac{2C_{sw}C_{ux}+2C_{sx}C_{uw}}{4}=C_{sw}C_{ux}+C_{sx}C_{uw}%
\end{align*}
so%
\begin{align*}
L_{2} &  =\frac{2c_{1}^{6}}{\Sigma^{6}T^{6H}}\int_{T^{6}}\mathbb{E}\left[
\left\langle I_{1}\left(  \mathbf{K}_{s}\otimes\mathbf{K}_{t}\right)
,I_{1}\left(  \mathbf{K}_{u}\otimes\mathbf{K}_{v}\right)  \right\rangle
_{\mathfrak{H}}I_{2}\left(  \mathbf{K}_{w}\otimes\mathbf{K}_{x}\right)
\right]  dsdtdudvdwdx\\
&  =\frac{2c_{1}^{6}}{4\Sigma^{6}T^{6H}}\int_{T^{6}}\left[
\begin{array}
[c]{c}%
\left(  C_{sw}C_{ux}+C_{sx}C_{uw}\right)  C_{tv}+\left(  C_{tw}C_{ux}%
+C_{tx}C_{uw}\right)  C_{sv}\\
+\left(  C_{sw}C_{vx}+C_{sx}C_{vw}\right)  C_{tu}+\left(  C_{tw}C_{vx}%
+C_{tx}C_{vw}\right)  C_{su}%
\end{array}
\right]  dsdtdudvdwdx\\
&  =\frac{c_{1}^{6}}{2\Sigma^{6}T^{6H}}\cdot8\int_{T^{6}}C_{sw}C_{ux}%
C_{tv}dsdtdudvdwdx\rightarrow\frac{4c_{1}^{6}}{\Sigma^{6}}\left(  2\right)
^{3}=4\text{.}%
\end{align*}
Finally,
\[
\boxed{\mathbb{E}\left[  F_{T}g_{F_{T}}\right]\rightarrow\left(  2+4\right)
-2=4\text{.}}%
\]

\item {\bf Convergence of the Second Moment of $\boldsymbol{g_{F_T}}$:}
\begin{align}
\mathbb{E}\left[  g_{F_{T}}^{2}\right]  &=\mathbb{E}\left[  \left\langle
DF_{T},-DL^{-1}F_{T}\right\rangle _{\mathfrak{H}}^{2}\right]\\
&=\frac{1}{\Sigma^{8}T^{8H}}\int_{T^{8}}\mathbb{E}\left[  \left\langle
DF_{st},-DL^{-1}F_{uv}\right\rangle _{\mathfrak{H}}\left\langle DF_{wx}%
,-DL^{-1}F_{yz}\right\rangle _{\mathfrak{H}}\right]  dsdtdudvdwdxdydz\text{.}
\label{ex_eqn_g2}%
\end{align}
Inside the expectation, a generic term is%
\begin{align*}
&  \left\langle I_{n-1}\left(  \mathbf{K}_{s}^{\otimes m}\otimes\mathbf{K}%
_{t}^{\otimes\left(  n-m\right)  }\right)  C_{st}^{r},I_{N-1}\left(
\mathbf{K}_{u}^{\otimes M}\otimes\mathbf{K}_{v}^{\otimes\left(  N-M\right)
}\right)  C_{uv}^{R}\right\rangle _{\mathfrak{H}}\\
& \hspace{1.5cm} \times\left\langle I_{n^{\prime}-1}\left(  \mathbf{K}_{w}^{\otimes
m^{\prime}}\otimes\mathbf{K}_{x}^{\otimes\left(  n^{\prime}-m^{\prime}\right)
}\right)  C_{wx}^{r^{\prime}},I_{N^{\prime}-1}\left(  \mathbf{K}_{y}^{\otimes
M^{\prime}}\otimes\mathbf{K}_{z}^{\otimes\left(  N^{\prime}-M^{\prime}\right)
}\right)  C_{yz}^{R^{\prime}}\right\rangle_\mathfrak{H}\\
&\hspace{.5cm}  =\sum_{a,A,a',A'}\boldsymbol{\kappa}I_{n-1}\left(  \mathbf{K}_{s}^{\otimes a}\otimes\mathbf{K}%
_{t}^{\otimes\left(  n-1-a\right)  }\right)  I_{N-1}\left(  \mathbf{K}%
_{u}^{\otimes A}\otimes\mathbf{K}_{v}^{\otimes\left(  N-1-A\right)  }\right)
I_{n^{\prime}-1}\left(  \mathbf{K}_{w}^{\otimes a^{\prime}}\otimes
\mathbf{K}_{x}^{\otimes\left(  n^{\prime}-1-a^{\prime}\right)  }\right)  \\
&  \hspace{1.5cm} \quad\quad\times I_{N^{\prime}-1}\left(  \mathbf{K}_{y}^{\otimes A^{\prime}}\otimes
\mathbf{K}_{z}^{\otimes\left(  N^{\prime}-1-A^{\prime}\right)  }\right)\left\langle \mathbf{f}_{st}^{\left(  m,a\right)
},\mathbf{f}_{uv}^{\left(  M,A\right)  }\right\rangle _{\mathfrak{H}%
}\left\langle \mathbf{f}_{wx}^{\left(  m^{\prime},a^{\prime}\right)
},\mathbf{f}_{yz}^{\left(  M^{\prime},A^{\prime}\right)  }\right\rangle
_{\mathfrak{H}}C_{st}^{r}C_{uv}^{R}C_{wx}^{r^{\prime}}C_{yz}^{R^{\prime}%
}\text{.}%
\end{align*}

The expectation of the product of the four multiple integrals is of the form%
\[
\mathbb{E}\left[  I_{n-1}\left(  a\right)  I_{N-1}\left(  b\right)
I_{n^{\prime}-1}\left(  c\right)  I_{N^{\prime}-1}\left(  d\right)  \right]
=\sum_{p,q}\boldsymbol{\kappa}\left\langle a\widetilde{\otimes}_{p}b,c\widetilde{\otimes}_{q}d\right\rangle _{\mathfrak{H}%
^{\otimes\left(  n+N-2-2p\right)  }}\mathbf{1}_{\{n+N-2p=n^{\prime}+N^{\prime}%
-2q\}}\text{.}%
\]
$\mathbb{E}\left[  \left\langle DF_{st},-DL^{-1}F_{uv}\right\rangle
_{\mathfrak{H}}\left\langle DF_{wx},-DL^{-1}F_{yz}\right\rangle _{\mathfrak{H}%
}\right]  $ then is a summation
\[
\sum_{n=1}^{\infty}\sum_{m=0}^{n}\sum_{r=0}^{\infty}\sum_{N=1}^{\infty}%
\sum_{M=0}^{N}\sum_{R=0}^{\infty}\sum_{n^{\prime}=1}^{\infty}\sum_{m^{\prime
}=0}^{n^{\prime}}\sum_{r^{\prime}=0}^{\infty}\sum_{N^{\prime}=1}^{\infty}%
\sum_{M^{\prime}=0}^{N^{\prime}}\sum_{R^{\prime}=0}^{\infty}\sum_{a,A,a',A'}\sum
_{\{n+N-2p=n^{\prime}+N^{\prime}-2q\}}%
\]
consisting of $\binom{8}{2}=28$ different types of correlations whose
exponents add up to $S=p+q+\left(  n+N-2-2p\right)  +2+\left(  r+R+r^{\prime
}+R^{\prime}\right)  $. Along with the condition $n+N-2p=n^{\prime}+N^{\prime
}-2q$, we have $2S=n+N+n^{\prime}+N^{\prime}+2\left(  r+R+r^{\prime}%
+R^{\prime}\right)  $. By a careful consideration of the indices, $e\left(
m,n,r\right)  e\left(  M,N,R\right)  e\left(  m^{\prime},n^{\prime},r^{\prime
}\right)  e\left(  M^{\prime},N^{\prime},R^{\prime}\right)  =0$ if $S\leq3$.
By point 4 in Proposition \ref{calculations}, using $P=8$ for the number of variables in
(\ref{ex_eqn_g2}), we see that the integrand terms for which $S>4$ contribute
nothing to the limit as $T\rightarrow\infty$. If $S=4$ and $e\left(
m,n,r\right)  e\left(  M,N,R\right)  e\left(  m^{\prime},n^{\prime},r^{\prime
}\right)  e\left(  M^{\prime},N^{\prime},R^{\prime}\right)  =1$, we only have
$\left(  m,n,r\right)  =\left(  M,N,R\right)  =\left(  m^{\prime},n^{\prime
},r^{\prime}\right)  =\left(  M^{\prime},N^{\prime},R^{\prime}\right)
=\left(  1,2,0\right)  $. Therefore,%
\[
\mathbb{E}\left[  g_{F_{T}}^{2}\right]  \sim\frac{4c_{1}^{8}}{\Sigma^{8}%
T^{8H}}\int_{T^{8}}\mathbb{E}\left[  \left\langle I_{1}\left(  \mathbf{K}%
_{s}\otimes\mathbf{K}_{t}\right)  ,I_{1}\left(  \mathbf{K}_{u}\otimes
\mathbf{K}_{v}\right)  \right\rangle _{\mathfrak{H}}\left\langle I_{1}\left(
\mathbf{K}_{w}\otimes\mathbf{K}_{x}\right)  ,I_{1}\left(  \mathbf{K}%
_{y}\otimes\mathbf{K}_{z}\right)  \right\rangle _{\mathfrak{H}}\right]
dsdtdudvdwdxdydz.
\]
We use (\ref{ex_eqn_I1kernel2}) on the integrand:%
\begin{align*}
& \mathbb{E}\left[  \left\langle I_{1}\left(  \mathbf{K}_{s}\otimes
\mathbf{K}_{t}\right)  ,I_{1}\left(  \mathbf{K}_{u}\otimes\mathbf{K}%
_{v}\right)  \right\rangle _{\mathfrak{H}}\left\langle I_{1}\left(
\mathbf{K}_{w}\otimes\mathbf{K}_{x}\right)  ,I_{1}\left(  \mathbf{K}%
_{y}\otimes\mathbf{K}_{z}\right)  \right\rangle _{\mathfrak{H}}\right]  \\
& =\frac{1}{16}\mathbb{E}\left[
\begin{array}
[c]{c}%
\left\{  I_{1}\left(  \mathbf{K}_{s}\right)  I_{1}\left(  \mathbf{K}%
_{u}\right)  C_{tv}+I_{1}\left(  \mathbf{K}_{t}\right)  I_{1}\left(
\mathbf{K}_{u}\right)  C_{sv}+I_{1}\left(  \mathbf{K}_{s}\right)  I_{1}\left(
\mathbf{K}_{v}\right)  C_{tu}+I_{1}\left(  \mathbf{K}_{t}\right)  I_{1}\left(
\mathbf{K}_{v}\right)  C_{su}\right\}  \\
\times\left\{  I_{1}\left(  \mathbf{K}_{w}\right)  I_{1}\left(  \mathbf{K}%
_{y}\right)  C_{xz}+I_{1}\left(  \mathbf{K}_{x}\right)  I_{1}\left(
\mathbf{K}_{y}\right)  C_{wz}+I_{1}\left(  \mathbf{K}_{w}\right)  I_{1}\left(
\mathbf{K}_{z}\right)  C_{xy}+I_{1}\left(  \mathbf{K}_{x}\right)  I_{1}\left(
\mathbf{K}_{z}\right)  C_{wy}\right\}
\end{array}
\right]  \text{.}%
\end{align*}
Therefore,%
\begin{align*}
\mathbb{E}\left[  g_{F_{T}}^{2}\right]    & \sim\frac{4c_{1}^{8}}{\Sigma
^{8}T^{8H}}\cdot\frac{1}{16}\cdot16\int_{T^{8}}\mathbb{E}\left[  I_{1}\left(
\mathbf{K}_{s}\right)  I_{1}\left(  \mathbf{K}_{u}\right)  I_{1}\left(
\mathbf{K}_{w}\right)  I_{1}\left(  \mathbf{K}_{y}\right)  \right]
C_{tv}C_{xz}dsdtdudvdwdxdydz\\
& \sim\frac{4c_{1}^{8}}{\Sigma^{8}T^{4H}}\int_{T^{4}%
}\mathbb{E}\left[  I_{1}\left(  \mathbf{K}_{s}\right)  I_{1}\left(
\mathbf{K}_{u}\right)  I_{1}\left(  \mathbf{K}_{w}\right)  I_{1}\left(
\mathbf{K}_{y}\right)  \right]  dsdudwdy\cdot\frac{1}{T^{4H}}\int_{T^4}C_{tv}C_{xz}dtdvdxdz
\end{align*}%
We have
\begin{align*}
\mathbb{E}\left[  I_{1}\left(  \mathbf{K}_{s}\right)  I_{1}\left(
\mathbf{K}_{u}\right)  I_{1}\left(  \mathbf{K}_{w}\right)  I_{1}\left(
\mathbf{K}_{y}\right)  \right]    & =\sum_{p=0}^{1}\sum_{q=0}^{1}%
\mathbb{E}\left[  I_{2-2p}\left(  \mathbf{K}_{s}\otimes_{p}\mathbf{K}%
_{u}\right)  I_{2-2q}\left(  \mathbf{K}_{w}\otimes_{p}\mathbf{K}_{y}\right)
\right]  \\
& =\mathbb{E}\left[  I_{2}\left(  \mathbf{K}_{s}\otimes\mathbf{K}_{u}\right)
I_{2}\left(  \mathbf{K}_{w}\otimes\mathbf{K}_{y}\right)  \right]
+\mathbb{E}\left[  I_{0}\left(  \mathbf{K}_{s}\otimes_{1}\mathbf{K}%
_{u}\right)  I_{0}\left(  \mathbf{K}_{w}\otimes_{1}\mathbf{K}_{y}\right)
\right]  \\
& =2\left\langle \mathbf{K}_{s}\widetilde{\otimes}\mathbf{K}_{u},\mathbf{K}%
_{w}\widetilde{\otimes}\mathbf{K}_{y}\right\rangle _{\mathfrak{H}^{\otimes2}%
}+C_{su}C_{wy}\\
& =2\left\langle \frac{\left(  \mathbf{K}_{s}\otimes\mathbf{K}_{u}\right)
+\left(  \mathbf{K}_{u}\otimes\mathbf{K}_{s}\right)  }{2},\frac{\left(
\mathbf{K}_{w}\otimes\mathbf{K}_{y}\right)  +\left(  \mathbf{K}_{y}%
\otimes\mathbf{K}_{w}\right)  }{2}\right\rangle _{\mathfrak{H}^{\otimes2}%
}+C_{su}C_{wy}\\
& =\frac{C_{sw}C_{uy}}{2}+\frac{C_{uw}C_{sy}}{2}+\frac{C_{sy}C_{uw}}{2}%
+\frac{C_{uy}C_{sw}}{2}+C_{su}C_{wy}\\
& =C_{sw}C_{uy}+C_{uw}C_{sy}+C_{su}C_{wy}%
\end{align*}
and so%
\[
\mathbb{E}\left[  g_{F_{T}}^{2}\right]  \sim\frac{4c_{1}^{8}}{\Sigma^{8}}\cdot\frac{1}{T^{4H}}\cdot3\int_{T^{4}}C_{sw}C_{uy}dsdudwdy\cdot\frac{1}{T^{4H}}\int_{T^4}C_{tv}C_{xz}dtdvdxdz\rightarrow
\frac{4c_{1}^{8}}{\Sigma^{8}}\cdot3\left(  2\right)
^{2}\left(  2\right)  ^{2}=12
\]
proving that
$$\boxed{\mathbb{E}\left[g_{F_T}^2\right]\to12.}$$

\end{itemize}

Let $\beta=2$ and $\gamma=2$. We've shown that%
\begin{align*}
\mathbb{E}\left[  F_{T}^{2}\right]    & \rightarrow2=\gamma\\
\mathbb{E}\left[  F_{T}g_{F_{T}}\right] & \rightarrow4=\beta\gamma\\
\mathbb{E}\left[  g_{F_{T}}^{2}\right]    & \rightarrow12=\beta^{2}%
\gamma+\gamma^{2}\text{.}%
\end{align*}
Therefore, $F_{T}$ converges to a (centered) Gamma random variable.
Specifically, since $\beta=2$, it converges to a (centered)
Chi-squared random variable with one degree of freedom (see Table \ref{table_geestar}).
\end{proof}

\bigskip

Notice that this example is not trivial since is not a process in a fixed Wiener chaos. Also, it shows the importance of equation (\ref{eqn_inequality2}) in Theorem \ref{thm_boundnew}, since equation (\ref{eqn_inequality1}) is very intractable in this case.

\section{\label{sec_WPresults}NP bound in Wiener-Poisson space}

In Wiener-Poisson space, if we repeat the process before equation
(\ref{eqn_basicbound}) and use (\ref{eqn_integpartsWP_L}), the correct
integration by parts formula, we get
\begin{align}
d_{\mathcal{H}}\left(  X,Z\right)  \leq\sup_{f\in\mathscr{F}_{\mathcal{H}}%
}\left|  \mathbb{E}\left[  f^{\prime}\left(  X\right)  \left(  g_{\ast}\left(
X\right)  -g_{X}\right)  \right]  + \mathbb{E}\left[  \left\langle \int_0^{D_zF}f''(F+xu)x(D_zF-u)du,-DL^{-1}%
X\right\rangle _\mathfrak{H}\right]\right|\label{eqn_basicboundWP}%
\end{align}
It becomes evident that we need to find universal bounds on the first and
second derivatives of $f$. Recall from subsection \ref{fprimeprime} that we only have such bounds when $l=-\infty$ and $u=\infty$. With this in mind, we have $\mathscr{F}_{\mathcal{H}}=\{f\in\mathcal{C}^1 : f' \text{ is Lipschitz, } ||f'||_{\infty}<k_1, ||f''||_{\infty}<k_2\}$, where $k_1$ and $k_2$ depend only on the distance $d_{\mathcal{H}}$. The following is a generalization of Theorem 2 in
\cite{Viquez} (where $Z$ was standard Normal) and an extension of Theorem
\ref{thm_boundnew} to Wiener-Poisson space.

\begin{theorem}
\label{thm_boundnewWP}(NP bound) Let $d_{\mathcal{H}}$ be $d_{W}$ or $d_{FM}$. Under Assumptions A and B$^{\prime}$,
\begin{align*}
d_{\mathcal{H}}\left(  X,Z\right)   &  \leq k\biggl(\mathbb{E}\left|  g_{\ast}\left(  X\right)
-g_X\right|  +\mathbb{E}\left[
\left\langle \left|  x\left(  DX\right)  ^{2}\right|  ,\left|  -DL^{-1}%
X\right|  \right\rangle _\mathfrak{H}\right]\biggr)
\end{align*}
where $k$ is a finite constant depending only on $Z$ and on $d_{\mathcal{H}}$.
\end{theorem}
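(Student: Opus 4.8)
The plan is to bound directly the right-hand side of the already-established inequality (\ref{eqn_basicboundWP}). The whole reason for invoking Assumption B$'$ rather than merely B is that B$'$ forces the support of $Z$ to be all of $(-\infty,\infty)$, which by the Remark preceding Theorem \ref{thm_fprime2} is precisely the regime in which a universal bound on $f''$ is available. Thus under Assumptions A and B$'$ every $f\in\mathscr{F}_{\mathcal{H}}$ enjoys \emph{both} $\|f'\|_{\infty}\le k_1$ (Theorem \ref{prop_fprimebound}) and $\|f''\|_{\infty}\le k_2$ (Theorem \ref{thm_fprime2}), with $k_1,k_2$ depending only on $Z$ and on $d_{\mathcal{H}}$. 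I would then split the expression inside the supremum in (\ref{eqn_basicboundWP}) by the triangle inequality into a first-derivative term and a second-derivative term and estimate each uniformly over $\mathscr{F}_{\mathcal{H}}$.

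The first term is immediate: $|\mathbb{E}[f'(X)(g_{\ast}(X)-g_X)]|\le\|f'\|_{\infty}\,\mathbb{E}|g_{\ast}(X)-g_X|\le k_1\,\mathbb{E}|g_{\ast}(X)-g_X|$. For the second term I would work pointwise in $z=(t,x)$. The single technical step is to control the inner integral $\int_0^{D_zX}f''(X+xu)\,x\,(D_zX-u)\,du$. Pulling out $\|f''\|_{\infty}\le k_2$ and bounding the oriented integral by the integral of the absolute value over $[\min(0,D_zX),\max(0,D_zX)]$, one reduces to the elementary computation $\int_{\min(0,D_zX)}^{\max(0,D_zX)}|D_zX-u|\,du=(D_zX)^2/2$, valid whether $D_zX$ is positive or negative. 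This yields $\bigl|\int_0^{D_zX}f''(X+xu)\,x\,(D_zX-u)\,du\bigr|\le\tfrac{k_2}{2}|x|(D_zX)^2$. Using then $|\langle\phi,\psi\rangle_{\mathfrak{H}}|\le\langle|\phi|,|\psi|\rangle_{\mathfrak{H}}$ for the $L^2_{\mu}$ pairing, the $\mathfrak{H}$-inner product is bounded by $\tfrac{k_2}{2}\langle|x(DX)^2|,|-DL^{-1}X|\rangle_{\mathfrak{H}}$, and taking expectations gives $\tfrac{k_2}{2}\,\mathbb{E}[\langle|x(DX)^2|,|-DL^{-1}X|\rangle_{\mathfrak{H}}]$.

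Finally I would set $k=\max\{k_1,k_2/2\}$ and combine the two estimates; since they hold uniformly over $f\in\mathscr{F}_{\mathcal{H}}$ they pass through the supremum in (\ref{eqn_basicboundWP}), producing the stated inequality with a constant depending only on $Z$ and $d_{\mathcal{H}}$. I do not anticipate a genuine obstacle beyond the two uniform derivative bounds, which are already in hand. The only computationally delicate point is the orientation of the inner $du$-integral when $D_zX<0$, where a sign slip is easy to make but is absorbed by passing to the absolute value. The real ``cost'' of the theorem is conceptual rather than technical: the bound on $f''$, and hence Assumption B$'$ with full-line support, is unavoidable, which is exactly why the Wiener-Poisson estimate carries the extra jump term $\mathbb{E}[\langle|x(DX)^2|,|-DL^{-1}X|\rangle_{\mathfrak{H}}]$ and applies to a narrower class of targets $Z$ than the Wiener-space bound of Theorem \ref{thm_boundnew}.
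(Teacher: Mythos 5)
Your proposal is correct and follows essentially the same route as the paper: both start from (\ref{eqn_basicboundWP}), use the uniform bounds $\|f'\|_{\infty}\le k_1$ (Theorem \ref{prop_fprimebound}) and $\|f''\|_{\infty}\le k_2$ (Theorem \ref{thm_fprime2}, whence Assumption B$'$ and full-line support) built into $\mathscr{F}_{\mathcal{H}}$, and then apply $|\langle a,b\rangle_{\mathfrak{H}}|\le\langle|a|,|b|\rangle_{\mathfrak{H}}$ together with $\left|\int f\right|\le\int|f|$ to absorb the jump term. The only difference is that you spell out what the paper leaves implicit --- the orientation of the $du$-integral when $D_zX<0$ and the elementary computation giving the factor $(D_zX)^2/2$ --- which is a welcome level of detail but not a different argument.
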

\begin{proof}
This follows immediately from (\ref{eqn_basicboundWP}) since $|\langle a,b\rangle|\leq\langle|a|,|b|\rangle$ and $\left|\int f\right|\leq\int|f|$.
\end{proof}

\bigskip

This upper bound was first developed for Poisson space in \cite{PUST}, where was used to prove several CLTs for Poisson functionals. In \cite{Viquez} it was used to prove CLTs for Wiener-Poisson functionals.

\begin{corollary}
\label{cor_LsequenceWP}$X_{n}\rightarrow Z$ in distribution if both statements
are true.

\begin{enumerate}
\item $g_{\ast}\left(  X_{n}\right)  -g_{X_n}\rightarrow0$ in $L^{1}(\Omega)$.

\item $\left\langle \left|  x\left(  DX_{n}\right)  ^{2}\right|  ,\left|
-DL^{-1}X_{n}\right|  \right\rangle _\mathfrak{H}\rightarrow0$ in $L^{1}(\Omega)$.
\end{enumerate}
\end{corollary}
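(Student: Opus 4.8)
The plan is to read the statement off the Wiener-Poisson NP bound of Theorem \ref{thm_boundnewWP} and then invoke the metrization property of $d_{FM}$. First I would apply Theorem \ref{thm_boundnewWP} (whose hypotheses, Assumptions A and B$^{\prime}$, are in force throughout this subsection, so in particular $Z$ has support $(-\infty,\infty)$) to each member of the sequence. Taking $d_{\mathcal{H}}=d_{FM}$, this gives, with a single constant $k$ that depends only on $Z$ and not on $n$,
\[
d_{FM}\left(  X_{n},Z\right)  \leq k\Bigl(\mathbb{E}\left|  g_{\ast}\left(  X_{n}\right) -g_{X_{n}}\right| + \mathbb{E}\bigl[\left\langle \left|  x\left(  DX_{n}\right)  ^{2}\right|  ,\left|  -DL^{-1} X_{n}\right|  \right\rangle _\mathfrak{H}\bigr]\Bigr).
\]
The whole argument then reduces to checking that each of the two terms on the right tends to $0$ under the two hypotheses.

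Next I would identify the two summands with the two conditions. The first term is exactly $\mathbb{E}\left|g_{\ast}(X_n)-g_{X_n}\right|=\|g_{\ast}(X_n)-g_{X_n}\|_{L^1(\Omega)}$, so condition \textbf{1} sends it to $0$ by definition of $L^1$ convergence. For the second term, I would note that the random variable $\left\langle \left|  x\left(  DX_{n}\right)  ^{2}\right|  ,\left|  -DL^{-1} X_{n}\right|  \right\rangle _\mathfrak{H}$ is nonnegative (it is an inner product of pointwise absolute values), so its expectation equals its $L^1(\Omega)$ norm; hence condition \textbf{2}, which asserts $L^1$ convergence of this nonnegative quantity to $0$, is precisely the statement that its expectation vanishes in the limit. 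Consequently the entire right-hand side tends to $0$, so $d_{FM}(X_n,Z)\to 0$, and since $d_{FM}$ metrizes convergence in distribution (as recalled in Section \ref{sec_Stein}), we conclude $X_n\to Z$ in distribution.

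There is no genuine obstacle in this corollary: all the difficulty has already been absorbed into Theorem \ref{thm_boundnewWP}, whose validity rests on the uniform bounds for $f^{\prime}$ and $f^{\prime\prime}$ from Theorems \ref{prop_fprimebound} and \ref{thm_fprime2}. The only point requiring care is therefore bookkeeping rather than analysis, namely making explicit that the second hypothesis is the $L^1$ convergence of an \emph{already nonnegative} functional, so that convergence in $L^1(\Omega)$ and convergence of the expectation coincide; this is what lets the $L^1$ hypotheses feed directly into the $\mathbb{E}[\,\cdot\,]$ terms of the bound. I would also remark, for completeness, that the same conclusion holds for $d_W$ since $d_{FM}\leq d_W$ is not needed here—one simply takes $d_{\mathcal{H}}=d_{FM}$, the weakest of the two, which suffices to obtain convergence in law.
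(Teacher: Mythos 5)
Your proof is correct and is exactly the argument the paper intends: the corollary is stated immediately after Theorem \ref{thm_boundnewWP} precisely because it follows by applying that bound with the $n$-independent constant $k$, matching the two summands to the two $L^{1}$ hypotheses (using nonnegativity of the second functional), and invoking the fact that $d_{FM}$ metrizes convergence in distribution. Nothing is missing; your bookkeeping observation about the nonnegative term is the only point of care, and you handled it properly.
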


\bigskip

\noindent For convergence results inside a fixed Wiener chaos, the following preliminary
computations are needed.

\begin{proposition}
\label{lem_WPcomp}Let $X_n=I_{q}\left(  f_n\right)  $, with $\mathbb{E}\left[  X_{n}%
^{2}\right]  =q!\left\|  f_{n}\right\|  _{\mathfrak{H}^{\otimes q}}^{2}\rightarrow1$. Assume that for $r=0,\ldots,q-1$ and $s=0,\ldots,q-r$,
$\left\|  f_{n}\otimes_{r}^{s}f_{n}\right\|  _{\mathfrak{H}^{\otimes\left(
2q-2r-s\right)}} \mathbf{1}_{\{s=0,r\neq0\}\cup\{s\neq0,r=0\}}\rightarrow 0$. Then as $n\to\infty$,

\begin{enumerate}

\item $\mathbb{E}\left[  \left\|  DX_n\right\|  _\mathfrak{H}^{4}\right]\rightarrow q^2$;

\item $||DX_n||^2_\mathfrak{H}\rightarrow q$ in $L^2(\Omega)$;

\item $\mathbb{E}\left[  \int_{\mathbb{R}^{+}\times\mathbb{R}}x^{2}\left(  D_{z}%
X_{n}\right)  ^{4}d\mu\left(  z\right)  \right]  \rightarrow0$;

\item $\mathbb{E}\left[  X_n^{4}\right]\rightarrow 3$.

\end{enumerate}
\end{proposition}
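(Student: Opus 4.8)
My plan is to trace every quantity back to the contraction norms $\|f_n\otimes_r^s f_n\|$ supplied by the hypothesis, using throughout the identities $D_zX_n=qI_{q-1}(f_n(z,\cdot))$ and $-DL^{-1}X_n=\tfrac1q DX_n$, so that $\langle DX_n,-DL^{-1}X_n\rangle_{\mathfrak{H}}=\tfrac1q\|DX_n\|_{\mathfrak{H}}^2$, together with the elementary computation $\mathbb{E}[\|DX_n\|_{\mathfrak{H}}^2]=q\,q!\|f_n\|^2=q\,\mathbb{E}[X_n^2]\to q$. The hypothesis, once the indicator is unpacked, amounts to $\|f_n\otimes_r^0 f_n\|\to0$ for $r=1,\dots,q-1$ and $\|f_n\otimes_0^s f_n\|\to0$ for $s=1,\dots,q$, i.e. control of the two families of boundary contractions only.

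\textbf{Statements 1 and 2.} Because the mean of $\|DX_n\|_{\mathfrak{H}}^2$ tends to $q$, both claims reduce to showing $\operatorname{Var}(\|DX_n\|_{\mathfrak{H}}^2)\to0$: indeed $\mathbb{E}[\|DX_n\|_{\mathfrak{H}}^4]=\operatorname{Var}(\|DX_n\|_{\mathfrak{H}}^2)+(\mathbb{E}\|DX_n\|_{\mathfrak{H}}^2)^2$ and $\mathbb{E}[(\|DX_n\|_{\mathfrak{H}}^2-q)^2]=\operatorname{Var}(\|DX_n\|_{\mathfrak{H}}^2)+(\mathbb{E}\|DX_n\|_{\mathfrak{H}}^2-q)^2$. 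First I would write $\|DX_n\|_{\mathfrak{H}}^2=q^2\int I_{q-1}(f_n(z,\cdot))^2\,d\mu(z)$, expand the square by the product formula (\ref{eqn_productWP}), and integrate out the derivative variable $z$; this yields the chaos decomposition of $\|DX_n\|_{\mathfrak{H}}^2$ whose kernels are linear combinations of the symmetrized contractions $f_n\widetilde\otimes_r^s f_n$. By the isometry (item 3 following (\ref{eqn_productWP})) the variance is then a finite nonnegative combination of the squared norms $\|f_n\otimes_r^s f_n\|^2$ over the components of order $\geq1$. The boundary terms vanish by hypothesis and the interior terms $(r,s\geq1)$ are dominated by them through interpolation, so $\operatorname{Var}(\|DX_n\|_{\mathfrak{H}}^2)\to0$.

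\textbf{Statement 3.} Writing $\mathbb{E}[\int x^2(D_zX_n)^4\,d\mu(z)]=q^4\int x^2\,\mathbb{E}[I_{q-1}(f_n(z,\cdot))^4]\,d\mu(z)$, the weight $x^2$ annihilates the Brownian slice $\sigma^2\,dt\,\delta_0(x)$ and leaves only the jump part $x^4\,dt\,d\nu(x)$. Expanding the fourth moment and integrating the jump variable $z$ expresses the quantity as a nonnegative combination of norms $\|f_n\otimes_r^s f_n\|^2$ in which at least one jump variable is shared, i.e. $s\geq1$; these are handled exactly as above (the $r=0$ ones directly by hypothesis, the $r\geq1$ ones by interpolation), so the limit is $0$.

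\textbf{Statement 4.} I would invoke the Wiener--Poisson moments formula (Proposition \ref{lem_expec}, case $X=I_q(f)$) with $r=3$, giving $\mathbb{E}[X_n^4]=\tfrac3q\mathbb{E}[X_n^2\|DX_n\|_{\mathfrak{H}}^2]+\tfrac3q\mathbb{E}[\langle x(DX_n)^3,X_n+\theta_{\cdot}xDX_n\rangle_{\mathfrak{H}}]$. Splitting the remainder as $X_n\int x(D_zX_n)^3\,d\mu+\int\theta_z x^2(D_zX_n)^4\,d\mu$ and applying $|\theta_z|\le1$, Cauchy--Schwarz, and H\"older, it is bounded by $\mathbb{E}[X_n^4]^{1/4}\mathbb{E}[\|DX_n\|_{\mathfrak{H}}^4]^{1/4}\,\mathbb{E}[\int x^2(D_zX_n)^4 d\mu]^{1/2}+\mathbb{E}[\int x^2(D_zX_n)^4 d\mu]$, while the main term satisfies $\mathbb{E}[X_n^2\|DX_n\|_{\mathfrak{H}}^2]\le\mathbb{E}[X_n^4]^{1/2}\mathbb{E}[\|DX_n\|_{\mathfrak{H}}^4]^{1/2}$. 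Since statement 1 bounds $\mathbb{E}[\|DX_n\|_{\mathfrak{H}}^4]$ and statement 3 sends $\mathbb{E}[\int x^2(D_zX_n)^4 d\mu]\to0$, these read $a_n\le A a_n^{1/2}+B a_n^{1/4}+o(1)$ with $a_n=\mathbb{E}[X_n^4]$ and $A,B$ bounded, which forces $\sup_n a_n<\infty$. With the fourth moment now bounded, statement 2 yields $\mathbb{E}[X_n^2\|DX_n\|_{\mathfrak{H}}^2]=\mathbb{E}[X_n^2(\|DX_n\|_{\mathfrak{H}}^2-q)]+q\mathbb{E}[X_n^2]\to q$ and the remainder tends to $0$, so $\mathbb{E}[X_n^4]\to\tfrac3q\cdot q=3$.

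\textbf{Main obstacle.} The delicate point is not the bookkeeping but the interpolation: the hypothesis controls only the boundary contractions, so I must dominate every interior norm $\|f_n\otimes_r^s f_n\|$ with $r,s\geq1$ by the boundary ones via Cauchy--Schwarz/interpolation estimates for mixed Wiener--Poisson contractions. Securing these uniform dominations, together with the self-bounding estimate that produces $\sup_n\mathbb{E}[X_n^4]<\infty$, is where the real work lies.
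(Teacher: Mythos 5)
Your proposal follows, for points 1--3, essentially the same route as the paper: expand $\|DX_n\|_{\mathfrak{H}}^2$ and $x^2(D_zX_n)^4$ through the product formula (\ref{eqn_productWP}), use orthogonality of the chaoses together with Cauchy--Schwarz/H\"older, and reduce everything to the contraction norms $\|f_n\otimes_r^s f_n\|$; the paper also proves point 1 first and then deduces point 2 via the same variance identity you invoke. (One small imprecision: the variance of $\|DX_n\|_{\mathfrak{H}}^2$ is not literally a nonnegative combination of squared norms, since pairs $(r,s)\neq(R,S)$ with $2r+s=2R+S$ contribute cross inner products; but Cauchy--Schwarz turns these into products of norms, which is exactly the paper's step (*).) In point 4 you genuinely diverge: the paper proves $U_n=\mathbb{E}[X_n^2\|DX_n\|_{\mathfrak{H}}^2]\to q$ by a third, fairly long chaos expansion of $X_n^2$ against $\|DX_n\|_{\mathfrak{H}}^2$, whereas you first force $\sup_n\mathbb{E}[X_n^4]<\infty$ out of a self-bounding inequality and then obtain $U_n\to q$ from point 2 and Cauchy--Schwarz. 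Your variant is correct --- it does require $\mathbb{E}[X_n^4]<\infty$ for each fixed $n$, but that is already implicit in invoking the moments formula, in the paper as well --- and it buys you one fewer kernel computation; the remainder terms $V_n$ and $W_n$ are treated identically in both arguments.

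Concerning what you call the main obstacle: you are right that this is the crux, since the hypothesis controls only the boundary contractions while the diagonal sums visibly contain $\|f_n\otimes_r^s f_n\|$ with $r,s\geq1$. But it is not a delicate interpolation; it is a single application of Cauchy--Schwarz, and you should carry it out rather than defer it. Writing $z$ for the $s$ shared variables, $u,v$ for the free ones and $w$ for the $r$ integrated ones, and using the symmetry of $f_n$,
\begin{align*}
\bigl\|f_n\otimes_r^s f_n\bigr\|_{\mathfrak{H}^{\otimes(2q-2r-s)}}^2
&=\int \Bigl(\prod_{i=1}^{s}x_i^2\Bigr)\Bigl(\int f_n(w,z,u)\,f_n(w,z,v)\,d\mu^{\otimes r}(w)\Bigr)^{2}d\mu^{\otimes s}(z)\,d\mu^{\otimes(q-r-s)}(u)\,d\mu^{\otimes(q-r-s)}(v)\\
&\leq\int \Bigl(\prod_{i=1}^{s}x_i^2\Bigr)\bigl\|f_n(z,\cdot)\bigr\|_{\mathfrak{H}^{\otimes(q-s)}}^{4}\,d\mu^{\otimes s}(z)
=\bigl\|f_n\otimes_0^s f_n\bigr\|_{\mathfrak{H}^{\otimes(2q-s)}}^2\text{,}
\end{align*}
where the inequality is Cauchy--Schwarz in $w$ followed by integration in $u$ and $v$. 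Hence every contraction with $s\geq1$ is dominated by the boundary contraction $\|f_n\otimes_0^s f_n\|$, which vanishes by hypothesis; combined with the hypothesis for $s=0$, $1\leq r\leq q-1$, all contraction norms other than the one with $(r,s)=(q,0)$ tend to $0$, and this closes every step of your argument. It is worth noting that the paper's own proof is silent on precisely this point --- it simply asserts that the limits ``follow from the assumptions on the kernels' contractions'' --- so once the display above is inserted, your write-up is, if anything, more complete than the printed one.
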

\begin{proof}
Since $D_{z}X_n=qI_{q-1}\left(  f_n\left(  z,\cdot\right)  \right)  $, we can apply the product formula (\ref{eqn_productWP}) to get
\begin{align*}
\left\|  DX_n\right\|  _\mathfrak{H}^{2}  &  =\left\langle DX_n,DX_n\right\rangle _\mathfrak{H}%
=q^{2}\int I_{q-1}\left(  f_n\left(  z,\cdot\right)  \right)  I_{q-1}\left(
f_n\left(  z,\cdot\right)  \right)  d\mu\left(  z\right) \\
&  =q^{2}\int\sum_{r=0}^{q-1}\sum_{s=0}^{q-1-r}r!s!\binom{q-1}{r}^{2}%
\binom{q-1-r}{s}^{2}I_{2q-2-2r-s}\bigl(  f_n\left(  z,\cdot\right)  \otimes
_{r}^{s}f_n\left(  z,\cdot\right)  \bigr)  d\mu\left(  z\right) \\
&  =q^{2}\sum_{p=1}^{q}\sum_{s=0}^{q-p}\left(  p-1\right)  !s!\binom{q-1}%
{p-1}^{2}\binom{q-p}{s}^{2}\int I_{2q-2p-s}\left(  f_n\left(  z,\cdot\right)
\otimes_{p-1}^{s}f_n\left(  z,\cdot\right)  \right)  d\mu\left(  z\right) \\
&  =\sum_{p=1}^{q}\sum_{s=0}^{q-p}pp!s!\binom{q}{p}^{2}\binom{q-p}{s}%
^{2}I_{2q-2p-s}\left(  \int f_n\left(  z,\cdot\right)  \otimes_{p-1}^{s}f_n\left(
z,\cdot\right)  d\mu\left(  z\right)  \right) \\
&  =\sum_{r=1}^{q}\sum_{s=0}^{q-r}rr!s!\binom{q}{r}^{2}\binom{q-r}{s}%
^{2}I_{2q-2r-s}\left(  f_n\otimes_{r}^{s}f_n\right)  \text{.}%
\end{align*}

Also by orthogonality of chaoses,%
\begin{align*}
\mathbb{E}\left[  \left\|  DX_n\right\|  _\mathfrak{H}^{4}\right]  &=\sum_{r,R=1}^{q}%
\sum_{s=0}^{q-r}\sum_{S=0}^{q-R}rRr!R!s!S!\binom{q}{r}^{2}\binom{q}{R}%
^{2}\binom{q-r}{s}^{2}\binom{q-R}{S}^{2}\mathbb{E}\left[  I_{2q-2r-s}\left(
f_n\otimes_{r}^{s}f_n\right)  I_{2q-2R-S}\left(  f_n\otimes_{R}^{S}f_n\right)
\right]\\
&\overset{\left(  \text{*}\right)  }{\leq}\sum_{%
\genfrac{.}{.}{0pt}{}{r,R=1}{r\neq R}%
}^{q}\sum_{s=0}^{q-r}\sum_{S=0}^{q-R}rRr!R!s!S!\binom{q}{r}^{2}\binom{q}%
{R}^{2}\binom{q-r}{s}^{2}\binom{q-R}{S}^{2}\\
&  \hspace{3cm}\times\mathbf{1}_{\{2r+s=2R+S\}}\left(  2q-2r-s\right)  !\left|\left| f_n\otimes_{r}^{s}f_n\right|\right|_{\mathfrak{H}^{\otimes\left(  2q-2r-s\right)}}\left|\left|f_n\otimes%
_{R}^{S}f_n\right|\right|_{\mathfrak{H}^{\otimes\left(  2q-2r-s\right)  }}\\
&  \hspace{1cm}+\sum_{r=1}^{q-1}\sum_{s=0}^{q-r}r^{2}\left(  r!\right)
^{2}\left(  s!\right)  ^{2}\binom{q}{r}^{4}\binom{q-r}{s}^{4}\left(
2q-2r-s\right)  !\left\|  f_n\otimes_{r}^{s}f_n\right\|  _{\mathfrak{H}^{\otimes
\left(  2q-2r-s\right)  }}^{2}+q^{2}\left(  q!\left\|  f_n\right\|  _{\mathfrak{H}^{\otimes
q}}^{2}\right)  ^{2}
\end{align*}
In $\left(  \text{*}\right)  $, we used $||\widetilde
{g}||_\mathfrak{H}\leq||g||_\mathfrak{H}$ for nonsymmetric $g$ (this follows by a simple application of the triangle inequality), and H$\ddot{\text{o}}$lder's inequality
in the following:
\begin{align*}
\mathbb{E}\left[  I_{2q-2r-s}\left(  f_n\otimes_{r}^{s}f_n\right)  I_{2q-2R-S}%
\left(  f_n\otimes_{R}^{S}f_n\right)  \right]  &=\mathbf{1}_{\{2r+s=2R+S\}}\left(
2q-2r-s\right)  !\left\langle f_n\widetilde{\otimes}_{r}^{s}f_n,f_n\widetilde{\otimes}%
_{R}^{S}f_n\right\rangle _{\mathfrak{H}^{\otimes\left(  2q-2r-s\right)  }}\\
&\leq\mathbf{1}_{\{2r+s=2R+S\}}\left(
2q-2r-s\right)  !\left|\left| f_n\widetilde{\otimes}_{r}^{s}f_n\right|\right|_{\mathfrak{H}^{\otimes\left(  2q-2r-s\right)}}\left|\left|f_n\widetilde{\otimes}%
_{R}^{S}f_n\right|\right|_{\mathfrak{H}^{\otimes\left(  2q-2r-s\right)  }}\text{.}
\end{align*}
$\mathbb{E}\left[  \left\|  DX_{n}\right\|  _\mathfrak{H}^{4}\right]  \rightarrow
q^{2}$ then follows from the assumptions on the kernels' contractions, proving the first point.

\bigskip

On the other hand,
\begin{align*}
\mathbb{E}\left[  \left(  \left\|  DX_{n}\right\|  _\mathfrak{H}^{2}-q\right)
^{2}\right]   &  =\mathbb{E}\left[  \left\|  DX_{n}\right\|  _\mathfrak{H}%
^{4}-2q\left\|  DX_{n}\right\|  _\mathfrak{H}^{2}+q^{2}\right] 
  =\mathbb{E}\left[  \left\|  DX_{n}\right\|  _\mathfrak{H}^{4}\right]  -2q\cdot
q\mathbb{E}\left[  X_{n}^{2}\right]  +q^{2}\rightarrow0
\end{align*}
so $\left\|  DX_{n}\right\|  _\mathfrak{H}^{2}\rightarrow q$ in $L^{2}\left(
\Omega\right)  $ proving the second point.

\bigskip

For the third point we have,
\begin{align*}
\left(  D_{z}X_n\right)  ^{2}  & =q^{2}\sum_{r=0}^{q-1}\sum_{s=0}%
^{q-1-r}r!s!\binom{q-1}{r}^{2}\binom{q-1-r}{s}^{2}I_{2q-2-2r-s}\bigl(
f_n\left(  z,\cdot\right)  \otimes_{r}^{s}f_n\left(  z,\cdot\right)  \bigr) \\
\left(  D_{z}X_n\right)  ^{4}  &  =q^{4}\sum_{r=0}^{q-1}\sum_{R=0}^{q-1}%
\sum_{s=0}^{q-1-r}\sum_{S=0}^{q-1-R}r!R!s!S!\binom{q-1}{r}^{2}\binom{q-1}%
{R}^{2}\binom{q-1-r}{s}^{2}\binom{q-1-R}{S}^{2}\\
&  \hspace{2cm}\times I_{2q-2-2r-s}\bigl(  f_n\left(  z,\cdot\right)  \otimes
_{r}^{s}f_n\left(  z,\cdot\right)  \bigr)  I_{2q-2-2R-S}\left(  f_n\left(
z,\cdot\right)  \otimes_{R}^{S}f_n\left(  z,\cdot\right)  \right) \\
\mathbb{E}\left[  \int x^{2}\left(  D_{z}X_n\right)  ^{4}d\mu\left(  z\right)
\right]   &  =q^{4}\sum_{r=0}^{q-1}\sum_{R=0}^{q-1}\sum_{s=0}^{q-1-r}%
\sum_{S=0}^{q-1-R}r!R!s!S!\binom{q-1}{r}^{2}\binom{q-1}{R}^{2}\binom{q-1-r}%
{s}^{2}\binom{q-1-R}{S}^{2}\\
&  \quad\quad\times\int\mathbb{E}\left[  I_{2q-2-2r-s}\bigl(  xf_n\left(
z,\cdot\bigr)  \otimes_{r}^{s}f_n\left(  z,\cdot\right)  \right)
I_{2q-2-2R-S}\left(  xf_n\left(  z,\cdot\right)  \otimes_{R}^{S}f_n\left(
z,\cdot\right)  \right)  \right]  d\mu\left(  z\right)  \text{.}%
\end{align*}
The expectation, when $2r+s=2R+S$, is bounded by
\begin{align*}
\left(  2q-2r-s-2\right)!  &\left|  \left\langle xf_n\left(  z,\cdot\right)\widetilde{\otimes}_{r}^{s}f_n\left(  z,\cdot\right)  ,xf_n\left(  z,\cdot\right)\widetilde{\otimes}_{R}^{S}f_n\left(  z,\cdot\right)  \right\rangle _{\mathfrak{H}^{\otimes\left(  2q-2R-S-2\right)  }}\right| \\
&  \leq\left(  2q-2r-s-2\right)  !\left\|  xf_n\left(  z,\cdot\right)\widetilde{\otimes}_{r}^{s}f_n\left(  z,\cdot\right)  \right\|  _{\mathfrak{H}^{\otimes\left(2q-2r-s-2\right)  }}\left\|  xf_n\left(  z,\cdot\right)  \widetilde{\otimes}_{R}^{S}f_n\left(  z,\cdot\right)  \right\|  _{\mathfrak{H}^{\otimes\left(  2q-2R-S-2\right)
}}\text{.}%
\end{align*}
Modulo the constant factor $\left(  2q-2r-s-2\right)  !$, the integral of the
expectation is bounded by%
\begin{align*}
\int\left\|  xf_n\left(  z,\cdot\right)  \widetilde{\otimes}_{r}^{s}f_n\left(z,\cdot\right)  \right\|  _{\mathfrak{H}^{\otimes\left(  2q-2r-s-2\right)  }}&\left\|xf_n\left(  z,\cdot\right)  \widetilde{\otimes}_{R}^{S}f_n\left(  z,\cdot\right)\right\|  _{\mathfrak{H}^{\otimes\left(  2q-2R-S-2\right)  }}d\mu\left(  z\right) \\
&  =\left\langle \left\|  xf_n\left(  z,\cdot\right)  \widetilde{\otimes}_{r}^{s}f_n\left(  z,\cdot\right)  \right\|  _{\mathfrak{H}^{\otimes\left(  2q-2r-s-2\right)}},\left\|  xf_n\left(  z,\cdot\right)  \widetilde{\otimes}_{R}^{S}f_n\left(z,\cdot\right)  \right\|  _{\mathfrak{H}^{\otimes\left(  2q-2R-S-2\right)  }}\right\rangle _\mathfrak{H}\\
&  \leq\bigl\|  \left\|  xf_n\left(  z,\cdot\right)  \otimes_{r}^{s}f_n\right\|_{H^{\otimes\left(  2q-2r-s-2\right)  }}\bigr\|  _\mathfrak{H}  \times\left\| || xf_n\left(  z,\cdot\right)  \otimes_{R}^{S}f_n\left(  z,\cdot\right)  || _{\mathfrak{H}^{\otimes\left(  2q-2R-S-2\right)  }}\right\|  _\mathfrak{H}
\end{align*}

We'll work out the first factor:%
\begin{align*}
\bigl\|  \left\|  xf_n\left(  z,\cdot\right)  \otimes_{r}^{s}f_n\left(
z,\cdot\right)  \right\|  _{\mathfrak{H}^{\otimes\left(  2q-2r-s-2\right)  }}\bigr\|
_\mathfrak{H}^{2}  &  =\int\left\|  xf_n\left(  z,\cdot\right)  \otimes_{r}^{s}f_n\left(
z,\cdot\right)  \right\|  _{\mathfrak{H}^{\otimes\left(  2q-2r-s-2\right)  }}^{2}%
d\mu\left(  z\right) \\
&  =\int\left\|  \left(  f_n\otimes_{r}^{s+1}f_n\right)  \left(  z,\cdot\right)
\right\|  _{\mathfrak{H}^{\otimes\left(  2q-2r-s-2\right)  }}^{2}d\mu\left(  z\right)
=\left\|  f_n\otimes_{r}^{s+1}f_n\right\|  _{\mathfrak{H}^{\otimes\left(  2q-2r-s-1\right)
}}^{2}\text{.}%
\end{align*}
Finally,
\begin{align*}
\mathbb{E}\left[  \int x^{2}\left(  D_{z}X_n\right)  ^{4}d\mu\left(  z\right)
\right]   &  \leq q^{4}\sum_{r,R=0}^{q-1}\sum_{s=0}^{q-1-r}\sum_{S=0}%
^{q-1-R}r!R!s!S!\binom{q-1}{r}^{2}\binom{q-1}{R}^{2}\binom{q-1-r}{s}^{2}%
\binom{q-1-R}{S}^{2}\\
&  \quad\quad\times\mathbf{1}_{\{2r+s=2R+S\}}\left(  2q-2r-s-2\right)  !\left\|
f_n\otimes_{r}^{s+1}f_n\right\|  _{\mathfrak{H}^{\otimes\left(  2q-2r-s-1\right)  }}%
^{2}\left\|  f_n\otimes_{R}^{S+1}f_n\right\|  _{\mathfrak{H}^{\otimes\left(
2q-2R-S-1\right)  }}^{2}\\
&  =q^{4}\sum_{r,R=0}^{q-1}\sum_{t=1}^{q-r}\sum_{T=1}^{q-R}r!R!\left(
t-1\right)  !\left(  T-1\right)  !\binom{q-1}{r}^{2}\binom{q-1}{R}^{2}%
\binom{q-1-r}{t-1}^{2}\binom{q-1-R}{T-1}^{2}\\
&  \quad\quad\times\mathbf{1}_{\{2r+t=2R+T\}}\left(  2q-2r-t-1\right)  !\left\|
f_n\otimes_{r}^{t}f_n\right\|  _{\mathfrak{H}^{\otimes\left(  2q-2r-t\right)  }}^{2}%
\times\left\|  f_n\otimes_{R}^{T}f_n\right\|  _{\mathfrak{H}^{\otimes\left(  2q-2R-T\right)
}}^{2}%
\end{align*}
The third point then follows.

\bigskip

Finally, for the fourth point, we use the moments formula Proposition \ref{lem_expec} to get

\[
\mathbb{E}\left[  X_n^{4}\right]  =\frac{3}{q}\mathbb{E}\left[  X_n^{2}\left\|
DX_n\right\|  _{H}^{2}\right]  +\frac{3}{q}\mathbb{E}\left[  \left\langle
x\left(  DX_n\right)  ^{3},X_n+\theta_{z}xDX_n\right\rangle _\mathfrak{H}\right]  =\frac
{3}{q}U_n+\frac{3}{q}\left(  V_n+W_n\right)
\]
where
\begin{align*}
V_n  &  =\mathbb{E}\left[  \left\langle x\left(  DX_n\right)  ^{3},X_n\right\rangle
_\mathfrak{H}\right]  =\mathbb{E}\left[  \left\langle x\left(  DX_n\right)  ^{2},X_n\left(
DX_n\right)  \right\rangle _\mathfrak{H}\right] \\
W_n  &  =\mathbb{E}\left[  \left\langle x\left(  DX_n\right)  ^{3},\theta
_{z}xDX_n\right\rangle _\mathfrak{H}\right]  =\mathbb{E}\left[  \int\theta_{z}%
x^{2}\left(  D_{z}X_n\right)  ^{4}d\mu\left(  z\right)  \right]
\end{align*}
and $U_n=\mathbb{E}\left[  X_n^{2}\left\|  DX_n\right\|  _\mathfrak{H}^{2}\right]  $. It is sufficient to prove that $U_n\to q$, $V_n\to0$ and $W_n\to0$ as $n\to\infty$.

\bigskip

To compute $U_{n}$, note that $X_n^{2}=\sum_{r=0}^{q}\sum_{s=0}%
^{q-r}r!s!\binom{q}{r}^{2}\binom{q-r}{s}^{2}I_{2q-2r-s}\left(  f_n\otimes
_{r}^{s}f_n\right)  $. Using our expression for $\left\|  DX_n\right\|  _\mathfrak{H}^{2}$
above,
\begin{align*}
\hspace{-1cm} U_n  &  =\sum_{r=0}^{q}\sum_{R=1}^{q}\sum_{s=0}^{q-r}\sum_{S=0}^{q-R}%
Rr!R!s!S!\binom{q}{r}^{2}\binom{q}{R}^{2}\binom{q-r}{s}^{2}\binom{q-R}{S}%
^{2}\mathbb{E}\left[  I_{2q-2r-s}\left(  f_n\otimes_{r}^{s}f_n\right)
I_{2q-2R-S}\left(  f_n\otimes_{R}^{S}f_n\right)  \right] \\
&  =\sum_{r=0}^{q}\sum_{R=1}^{q}\sum_{s=0}^{q-r}\sum_{S=0}^{q-R}%
Rr!R!s!S!\binom{q}{r}^{2}\binom{q}{R}^{2}\binom{q-r}{s}^{2}\binom{q-R}{S}%
^{2}\mathbf{1}_{\{2r+s=2R+S\}}\left(  2q-2r-s\right)  !\left\langle f_n\widetilde
{\otimes}_{r}^{s}f_n,f_n\widetilde{\otimes}_{R}^{S}f_n\right\rangle _{\mathfrak{H}^{\otimes\left(
2q-2r-s\right)  }}\\
&=\sum_{%
\genfrac{.}{.}{0pt}{}{r=0,R=1}{r\neq R}%
}^{q}\sum_{s=0}^{q-r}\sum_{S=0}^{q-R}Rr!R!s!S!\binom{q}{r}^{2}\binom{q}%
{R}^{2}\binom{q-r}{s}^{2}\binom{q-R}{S}^{2}\mathbf{1}_{\{2r+s=2R+S\}}\left(  2q-2r-s\right)  !\left\langle f_n\widetilde
{\otimes}_{r}^{s}f_n,f_n\widetilde{\otimes}_{R}^{S}f_n\right\rangle _{\mathfrak{H}^{\otimes\left(
2q-2r-s\right)  }}\\
&  \quad\quad+\sum_{r=1}^{q-1}\sum_{s=0}^{q-r}r\left(  r!\right)
^{2}\left(  s!\right)  ^{2}\binom{q}{r}^{4}\binom{q-r}{s}^{4}\left(
2q-2r-s\right)  !\left\|  f_n\widetilde{\otimes}_{r}^{s}f_n\right\|  _{\mathfrak{H}^{\otimes
\left(  2q-2r-s\right)  }}^{2}+q\left(  q!\left\|  f_n\right\|  _{\mathfrak{H}^{\otimes
q}}^{2}\right)  ^{2}
\end{align*}
We can again apply H$\ddot{\text{o}}$lder's inequality on the inner product,
and conclude that all the terms go to $0$ except the last term which goes to
$q$. Therefore, $U_{n}\rightarrow q$ as $n\rightarrow\infty$.

\bigskip

Observe that
\begin{align*}
\left|  V_{n}\right|   &  \leq\mathbb{E}\left[  \left\|  x\left(
DX_{n}\right)  ^{2}\right\|  _\mathfrak{H}\left\|  X_{n}\left(  DX_{n}\right)
\right\|  _\mathfrak{H}\right]  \leq\sqrt{\mathbb{E}\left[  \left\|  x\left(
DX_{n}\right)  ^{2}\right\|  _\mathfrak{H}^{2}\right]  }\sqrt{\mathbb{E}\left[
\left\|  X_{n}\left(  DX_{n}\right)  \right\|  _\mathfrak{H}^{2}\right]  }\\
&  =\sqrt{\mathbb{E}\left[  \int x^{2}\left(  D_{z}X_{n}\right)  ^{4}%
d\mu\left(  z\right)  \right]  }\sqrt{\mathbb{E}\left[  \left\|  X_{n}\left(
DX_{n}\right)  \right\|  _\mathfrak{H}^{2}\right]  }%
\end{align*}

Note that
\[
\mathbb{E}\left[  \left\|  X_{n}\left(  DX_{n}\right)  \right\|  _\mathfrak{H}%
^{2}\right]  =\mathbb{E}\left[  X_{n}^{2}\left\|  DX_{n}\right\|  _\mathfrak{H}%
^{2}\right]  =U_{n}\rightarrow q\text{.}
\]
From the third point, $\mathbb{E}\left[  \int_{\mathbb{R}^{+}\times\mathbb{R}%
}x^{2}\left(  D_{z}X_{n}\right)  ^{4}d\mu\left(  z\right)  \right]
\rightarrow0$ so $V_{n}\rightarrow0$ as $n\rightarrow\infty$.

\bigskip

Finally for $W_n$,
\[
|W_{n}|=\mathbb{E}\left[  \int\theta_{z}x^{2}\left(  D_{z}X_n\right)  ^{4}%
d\mu\left(  z\right)  \right]  \leq\mathbb{E}\left[  \int x^{2}\left(
D_{z}X_n\right)  ^{4}d\mu\left(  z\right)  \right] \longrightarrow0 \text{.}%
\]
Putting them together we get the fourth point: $\mathbb{E}[X_n^4]\to3$ as $n\to\infty$.
\end{proof}

\bigskip

In Wiener space, convergence in a fixed Wiener chaos to a standard normal
distribution is characterized by the convergence of the fourth moments to $3$
or of the convergence of the norm of certain contractions to $0$ (see list
preceding Corollary \ref{cor_Xchaos}). We would then like to see if the same
situation holds in Wiener-Poisson space. At this point, this appears to be an
open question. We then finish with the following theorem which shows
convergence in distribution and of the fourth moments to $3$ if certain
contractions converge to $0$.

\begin{theorem}
Let $X_n=I_{q}\left(  f_n\right)  $, with $\mathbb{E}\left[  X_{n}%
^{2}\right]  =q!\left\|  f_{n}\right\|  _{\mathfrak{H}^{\otimes q}}^{2}\rightarrow1$. Assume that for $r=0,\ldots,q-1$ and $s=0,\ldots,q-r$,
$\left\|  f_{n}\otimes_{r}^{s}f_{n}\right\|  _{\mathfrak{H}^{\otimes\left(
2q-2r-s\right)}} \mathbf{1}_{\{s=0,r\neq0\}\cup\{s\neq0,r=0\}}\rightarrow0$. Then as $n\to\infty$,
\begin{itemize}

\item $\mathbb{E}[X_n^4]\to3$.

\item $X_{n}\rightarrow \mathcal{N}\left(  0,1\right)  $ in distribution.

\end{itemize}
\end{theorem}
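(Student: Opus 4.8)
The plan is to read off the fourth-moment statement directly and to obtain the distributional convergence from the Wiener-Poisson $L^1$-criterion. The first bullet, $\mathbb{E}[X_n^4]\to 3$, is exactly the fourth conclusion of Proposition \ref{lem_WPcomp}, whose hypotheses coincide verbatim with those assumed here, so nothing further is needed for it. For the second bullet I would apply Corollary \ref{cor_LsequenceWP} with $Z\sim\mathcal{N}(0,1)$, for which $g_*(z)\equiv 1$. Since $X_n=I_q(f_n)$ lies in a fixed chaos we have $-DL^{-1}X_n=\frac{1}{q}DX_n$, hence $g_{X_n}=\mathbb{E}\bigl[\frac{1}{q}\|DX_n\|_\mathfrak{H}^2 \bigm| X_n\bigr]$ and $\langle DX_n,-DL^{-1}X_n\rangle_\mathfrak{H}=\frac{1}{q}\|DX_n\|_\mathfrak{H}^2$; these two identities are what make both hypotheses of Corollary \ref{cor_LsequenceWP} computable.

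First I would verify condition \textbf{1} of Corollary \ref{cor_LsequenceWP}, namely $g_*(X_n)-g_{X_n}=1-g_{X_n}\to 0$ in $L^1(\Omega)$. Writing $1-g_{X_n}=\mathbb{E}\bigl[1-\frac{1}{q}\|DX_n\|_\mathfrak{H}^2 \bigm| X_n\bigr]$ and using that conditional expectation is an $L^1$-contraction (conditional Jensen), one gets $\mathbb{E}|1-g_{X_n}|\leq\mathbb{E}\bigl|1-\frac{1}{q}\|DX_n\|_\mathfrak{H}^2\bigr|$. By the second conclusion of Proposition \ref{lem_WPcomp}, $\|DX_n\|_\mathfrak{H}^2\to q$ in $L^2(\Omega)$, hence in $L^1(\Omega)$, so the right-hand side tends to $0$ and condition \textbf{1} follows.

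The substantive step is condition \textbf{2}, $\langle |x(DX_n)^2|,|-DL^{-1}X_n|\rangle_\mathfrak{H}\to 0$ in $L^1(\Omega)$. Using $|-DL^{-1}X_n|=\frac{1}{q}|DX_n|$, this inner product equals $\frac{1}{q}\int |x|\,|D_z X_n|^3\, d\mu(z)$. I would control its expectation by Cauchy-Schwarz applied twice: first in $L^2(d\mu)$, splitting $|x||D_z X_n|^3=\bigl(|x|(D_z X_n)^2\bigr)\cdot |D_z X_n|$ to get the pointwise-in-$\omega$ bound $\int |x||D_z X_n|^3 d\mu(z)\leq\bigl(\int x^2 (D_z X_n)^4 d\mu(z)\bigr)^{1/2}\|DX_n\|_\mathfrak{H}$, and then in $L^2(\mathbb{P})$ to obtain $\mathbb{E}\bigl[\int |x||D_z X_n|^3 d\mu(z)\bigr]\leq\bigl(\mathbb{E}\bigl[\int x^2 (D_z X_n)^4 d\mu(z)\bigr]\bigr)^{1/2}\bigl(\mathbb{E}[\|DX_n\|_\mathfrak{H}^2]\bigr)^{1/2}$. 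The first factor tends to $0$ by the third conclusion of Proposition \ref{lem_WPcomp}, while $\mathbb{E}[\|DX_n\|_\mathfrak{H}^2]=q\,\mathbb{E}[X_n^2]\to q$ stays bounded (this identity is the $r=1$ case of the moments formula in Proposition \ref{lem_expec}), so the product tends to $0$ and condition \textbf{2} holds.

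With both conditions verified, Corollary \ref{cor_LsequenceWP} yields $X_n\to\mathcal{N}(0,1)$ in distribution, completing the proof. The only place requiring genuine care is condition \textbf{2}: the bound must route the cubic quantity $\int |x||D_z X_n|^3 d\mu$ through exactly the two controlled quantities $\int x^2(D_z X_n)^4 d\mu$ and $\|DX_n\|_\mathfrak{H}^2$ of Proposition \ref{lem_WPcomp}, and choosing the Cauchy-Schwarz split so that the leftover powers of $x$ assemble into $x^2$ (rather than $|x|$) is what makes the first factor match conclusion \textbf{3} exactly.
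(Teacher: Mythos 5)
Your proposal is correct and follows essentially the same route as the paper: the fourth-moment claim is read off from point 4 of Proposition \ref{lem_WPcomp}, and the distributional convergence is obtained from Corollary \ref{cor_LsequenceWP} by checking condition \textbf{1} via $g_*(X_n)-g_{X_n}=1-\frac{1}{q}\|DX_n\|_\mathfrak{H}^2\to0$ (point 2) and condition \textbf{2} via the identical double Cauchy--Schwarz split $\bigl(|x|(D_zX_n)^2\bigr)\cdot|D_zX_n|$, controlled by point 3 together with the boundedness of $\mathbb{E}[\|DX_n\|_\mathfrak{H}^2]=q\,\mathbb{E}[X_n^2]$. Your explicit invocation of the conditional-expectation $L^1$-contraction to pass from $1-\frac{1}{q}\|DX_n\|_\mathfrak{H}^2$ to $g_{X_n}$ is a minor refinement of a step the paper treats implicitly, but the argument is the same.
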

\begin{proof}
The first assertion is the fourth point in Proposition \ref{lem_WPcomp}. For the second
point, we refer to Corollary \ref{cor_LsequenceWP} to see that it suffices to
prove $g_{\ast}\left(  X_{n}\right)  -g_{X_{n}}\rightarrow0$ in $L^{2}\left(
\Omega\right)  $ and $\left\langle \left|  x\right|  \left(  DX_{n}\right)
^{2},\left|  DX_{n}\right|  \right\rangle _\mathfrak{H}\rightarrow0$ in $L^{1}(\Omega)$
as $n\rightarrow\infty$. These are immediate when we note that
$$g_*(X_n)-g_{X_n}=1-\frac{1}{q}||DX_n||^2_\mathfrak{H}\rightarrow 0 \ \text{ in } L^2(\Omega)\hspace{1cm} \text{(by point 2 of Proposition \ref{lem_WPcomp})}$$
and
\begin{align*}
\mathbb{E}\left[  \left\langle \left|  x\right|  \left(  DX_{n}\right)
^{2},\left|  DX_{n}\right|  \right\rangle _\mathfrak{H}\right]   &  \leq\mathbb{E}%
\left[  \left\|  x\left(  DX_{n}\right)  ^{2}\right\|  _\mathfrak{H}\left\|
DX_{n}\right\|  _\mathfrak{H}\right]  \leq\sqrt{\mathbb{E}\left[  \left\|  x\left(
DX_{n}\right)  ^{2}\right\|  _\mathfrak{H}^{2}\right]  }\sqrt{\mathbb{E}\left[
\left\|  DX_{n}\right\|  _\mathfrak{H}^{2}\right]  }\\
&  =\sqrt{\mathbb{E}\left[  \int x^{2}\left(  DX_{n}\right)  ^{4}d\mu\right]
}\sqrt{q\mathbb{E}\left[  X_{n}^2\right]  }\rightarrow0 \hspace{1cm} \text{(by point 3 of Proposition \ref{lem_WPcomp}).}%
\end{align*}
\end{proof}

\end{document}